\documentclass[11pt]{article}

\usepackage[english]{babel}

\usepackage[letterpaper,top=2cm,bottom=2cm,left=3cm,right=3cm,marginparwidth=1.75cm]{geometry}

\usepackage{amsmath}
\usepackage{amsfonts}
\usepackage{amsthm}

\usepackage{graphicx}
\usepackage{xcolor}
\usepackage{subcaption}
\usepackage{multirow}
\usepackage{booktabs}
\usepackage{enumitem}
\usepackage{placeins}

\usepackage[title]{appendix}

\usepackage{algorithm}
\usepackage{float}
\floatstyle{plaintop}
\restylefloat{algorithm}

\usepackage{tikz}
\usetikzlibrary{decorations.pathreplacing}

\usepackage[colorlinks=true, allcolors=blue]{hyperref}

\numberwithin{equation}{section}
\allowdisplaybreaks[4]

\newtheorem{theorem}{Theorem}[section]

\newtheorem{proposition}{Proposition}[section]

\newtheorem{remark}{Remark}[section]
\newtheorem{example}{Example}[section]

\title{Weighted Inverse Lax-Wendroff Boundary Treatment of Discontinuous
Galerkin Methods for Conservation Laws }
\author{
Yongjie Bi
    \footnote{School of Mathematical Sciences, University of Science and Technology of China, Hefei, Anhui 230026, China.
    E-mail: {\tt yjbi@mail.ustc.edu.cn}.}
    \footnote{Laoshan Laboratory, Qingdao 266237, China.},
\and Yan Jiang
    \footnote{School of Mathematical Sciences, University of Science and Technology of China, Hefei, Anhui 230026, China.
    Email: {\tt jiangy@ustc.edu.cn}.
    Research supported by NSFC grant 12271499.}
    \footnote{Laoshan Laboratory, Qingdao 266237, China.},
\and Yong Liu
    \footnote{ICMSEC, State Key Laboratory of Mathematical Sciences (SKLMS), Academy of Mathematics and Systems Science, Chinese Academy of Sciences, Beijing 100190, P.R. China.  E-mail: {\tt yongliu@lsec.cc.ac.cn}. Research supported by NSFC grant 12571395, 12288201, the Strategic Priority Research Program of the Chinese Academy of Sciences under the Grant No. XDB0640000, and the Youth Innovation Promotion Association (CAS).} 
}
\date{}

\begin{document}
\maketitle

\noindent \textbf{Abstract:}
In this paper, we propose a weighted inverse Lax-Wendroff (WILW) boundary treatment for the discontinuous Galerkin (DG) method on unfitted meshes to efficiently solve hyperbolic conservation laws in complex geometries. The proposed method employs the standard DG scheme for interior cells and reconstructs high-order approximation polynomials via the ILW principle for cut cells near boundaries to impose numerical boundary conditions, effectively eliminating the time-step restriction typically caused by small cut cells. In particular, to address the sensitivity of numerical errors to the geometric size of cut cells in the basic ILW scheme, we raise the reconstruction order at the boundary, ensuring that accuracy becomes independent of the cut‑cell size. Furthermore, it incorporates a weighted least‑squares reconstruction to reduce the need for complex high‑order boundary derivatives during construction. As a result, the method maintains high‑order accuracy while significantly improving computational efficiency for multi‑dimensional problems. Finally, the stability of the proposed method is theoretically validated through linear stability analysis, and the effectiveness and robustness of the proposed scheme are numerically verified through a series of one-dimensional and two-dimensional numerical experiments for scalar and system equations.

\vspace{1ex}
\noindent \textbf{Key Words:} Discontinuous Galerkin method, hyperbolic conservation laws, inverse Lax-Wendroff boundary treatment, numerical boundary treatment, high order accuracy, stability analysis

\section{Introduction}
Hyperbolic conservation laws accurately describe many practical fluid dynamics problems on complex geometric domains. During numerical simulations, unfitted meshes such as uniform Cartesian meshes offer a widely adopted and effective alternative to avoid the significant challenges and computational costs associated with generating high-quality body-fitted meshes for complex regions or moving boundaries. However, in an unfitted mesh framework, arbitrary physical boundaries typically intersect the background grid, resulting in irregular cut cells near the interface. Because the computational boundaries do not coincide with the physical boundaries, numerical boundary conditions must be meticulously handled to maintain the optimal order of accuracy. Furthermore, the geometric sizes of these cut cells are often highly arbitrary, with volume fractions that can be orders of magnitude smaller than those of standard interior cells. This disparity gives rise to the well-known ``small-cell'' problem, which imposes severe time-step restrictions for time-dependent problems using explicit time-marching methods. In this paper, we propose an effective high-order discontinuous Galerkin (DG) boundary treatment for hyperbolic conservation laws on unfitted meshes. In particular, we develop a new type of inverse Lax-Wendroff (ILW) method to further reduce the computational complexity.

The DG method is a class of finite element methods employing discontinuous basis functions, originally introduced by Reed and Hill \cite{reed1973}. Due to its high-order accuracy, low numerical dispersion and dissipation, significant geometric flexibility, and excellent parallel scalability, Runge-Kutta DG (RKDG) methods \cite{rkdg1, rkdg2, rkdg3, rkdg4, rkdg5} have achieved great success in solving various hyperbolic conservation laws. However, when applying DG methods on unfitted meshes, one must meticulously handle the cut cells generated by geometric intersections near the boundaries. To overcome the small-cell problem, various boundary treatment strategies have been proposed in the literature. These methods include implicit time stepping \cite{bastian2011}, { cell merging and agglomeration \cite{muller2017, qin2013, schoeder2020, Chenliu2023}}, ghost penalty stabilization \cite{bastian2011, fu2021, fu2022, gurkan2020, hansbo2014, sticko2016, sticko2019}, state redistribution \cite{giuliani2022}, and the shifted boundary method \cite{song2018}. Nevertheless, these techniques still face significant challenges in controlling the computational costs of complex boundary reconstruction and maintaining high-order accuracy near the boundaries. 

The ILW method has gained significant attention as an effective technique for imposing high-order numerical boundary conditions on Cartesian grids. Tan and Shu \cite{tan2010} originally proposed this approach to provide high-order boundary schemes for hyperbolic conservation laws within the finite difference framework. The ILW method utilizes the governing equations to convert normal derivatives at the boundary into time and tangential derivatives, subsequently determining values at ghost points through Taylor series expansions. Following the introduction, numerous researchers significantly refined the approach through extensive studies. To circumvent the intricate algebraic manipulations required by the standard ILW procedure for nonlinear systems, particularly in high-dimensional cases, the simplified ILW (SILW) method was proposed in \cite{tan2012, tan2013}. This approach constructs high-order derivatives directly by utilizing extrapolation techniques. 
In \cite{lu2021}, Lu et al. proposed an ILW method to handle problems involving changing wind directions at the boundaries. To ensure strict mass conservation, Ding et al. \cite{ding2020} modified the numerical fluxes near the boundaries, leading to a conservative ILW formulation. Furthermore, Li et al. \cite{li2023} integrated the ILW boundary treatment with the fast sweeping method, enabling efficient computation of the steady-state solutions of hyperbolic conservation laws. 
More recently, considerable effort has been devoted to reducing the computational cost of ILW boundary treatments. Liu et al. \cite{liu2024} developed an ILW technique for finite difference WENO schemes, in which the number of low-order terms involved in the reconstruction procedure is significantly reduced, thereby simplifying the implementation and improving computational efficiency. 
Along the same line, Zhu et al.\cite{zhu2026} introduced a new ILW method for finite difference HWENO schemes, where a least-squares formulation is employed to optimize the extrapolation procedure, effectively reducing computational complexity while preserving high-order accuracy and stability.
The ILW framework has also been extended to a broader class of high-order numerical methods. For example, it has been incorporated into finite volume WENO methods \cite{zhu2025} and one-dimensional DG methods \cite{yangdgilw}. However, the approach in \cite{yangdgilw} is difficult to apply to nonlinear systems and high-dimensional problems because of its high computational complexity. 
Beyond hyperbolic conservation laws, the ILW boundary treatments have found applications in a variety of other problems, including the convection-diffusion equations \cite{litt2022, litt2017, lu2016}, the Boltzmann equation \cite{Filbet2013}, and moving boundary problems \cite{cheng2021, liu2022, liu2023, tan2011, liuzp2025}. Additionally, linear stability analyses of (S)ILW methods have been carried out in \cite{litt2022, litt2016, litt2017, litt2024, vilar2015}, providing valuable theoretical insights for the development of stable numerical boundary treatments.

In this paper, we propose a numerical framework combining the ILW boundary treatment with the DG method to solve hyperbolic conservation laws on unfitted meshes and complex geometries efficiently. We employ the standard DG method for interior cells. Meanwhile, small cut cells near the boundaries are treated as special ghost cells where high-order approximation polynomials are reconstructed through ILW principles to impose numerical boundary conditions. This approach overcomes the small cut-cell problem and avoids computing numerical integrations on curved elements. However, preliminary studies indicate that while the basic ILW-DG scheme consistently achieves the optimal $(k+1)$-order convergence accuracy, the error depends significantly on the geometric size of the ghost cells. In \cite{yangdgilw}, a post-processing based on local conservation was developed to address this issue for 1D scalar problems and linear systems. This algorithm employs one additional normal derivative term at boundary. However, this procedure becomes overly complicated for nonlinear systems, particularly for two-dimensional problems. In fact, this conservative correction reduces the errors arising from the boundary treatment and, in some special cases, improves the accuracy of the boundary treatment by one order. Motivated by this observation, in this work, we propose a correction strategy by increasing the order of accuracy for the boundary reconstruction polynomials. 
More importantly, to extend the method efficiently to multi-dimensional cases, we propose a new class of weighted ILW (WILW) methods based on weighted least-squares reconstruction. This approach reduces the reliance on complex high-order boundary derivatives during polynomial construction. This strategy enhances computational efficiency significantly while maintaining the accuracy of the algorithm. Additionally, this paper provides a linear stability analysis that theoretically verifies the robustness of the proposed algorithms for second-, third-, and fourth-order methods ($k=1,2,3$). Numerical experiments show that our methods perform well in handling complex geometry boundaries.

The remainder of this paper is organized as follows. In Section 2, we describe the construction of (W)ILW methods for one-dimensional scalar conservation laws within the DG framework. This section provides a linear stability analysis of the numerical schemes through eigenvalue spectrum visualization and extends the methodology to one dimensional systems. Section 3 extends these algorithms to two-dimensional systems. In Section 4, a series of numerical experiments demonstrates the stability and high order accuracy of the proposed methods. Finally, Section 5 summarizes the work and provides concluding remarks.

\section{ILW method for one-dimensional conservation laws}
Consider the following one-dimensional scalar hyperbolic conservation law on domain $\Omega=(a,b)$:
\begin{equation}\label{1D Scalar Equation}
\begin{cases}
u_t + f(u)_x = 0, \quad x \in (a, b), \quad t > 0, \\
u(x, 0) = u_0(x), \quad x \in [a, b], \\
u(a, t) = g(t), \quad t > 0.
\end{cases}
\end{equation}
We assume that $f'(u(a,t)) \geq \sigma > 0 $ and $ f'(u(b,t)) \geq 0 $ for $ t > 0.$ Under these assumptions, $x = a$ is an inflow boundary where a Dirichlet boundary condition is prescribed, while $x = b$ is an outflow boundary where no boundary condition is required.

The domain is partitioned into a uniform mesh, as illustrated in Fig. \ref{Domain decomposition}.
\begin{equation}\label{1Dmesh}
a + \delta_1 = x_{\frac{1}{2}} < x_{\frac{3}{2}} < \cdots < x_{N+\frac{1}{2}} = b - \delta_2  
\end{equation}
with the uniform mesh size $h = x_{j+1/2}-x_{j-1/2} = (b-a-\delta_1-\delta_2)/N$, $j=1, \ldots, N$, $N \in \mathbb{N}$. Note that the physical boundary is allowed to be not coinciding with grid points, and the distance $\delta_{1,2}$ can be any values satisfying $0 \le \delta_{1,2} < h$. Let $I_j = [x_{j-1/2}, x_{j+1/2}]$, $j = 1, \dots, N$, and the computational domain is defined as $\widetilde{\Omega} = [x_{1/2}, x_{N+1/2}] = \cup_{j=1}^{N} I_j$, within which the numerical solution is computed using the standard DG method. The residual segments between $\Omega$ and $\widetilde{\Omega}$ are denoted by $\tilde{I}_0 = [a, x_{1/2}]$ and $\tilde{I}_{N+1} = [x_{N+1/2}, b]$.

\begin{figure}[htbp]
    \centering
    \begin{tikzpicture}[
        scale=1.2,
        dot/.style={circle, fill, inner sep=1.5pt},
        brace/.style={decorate, decoration={brace, amplitude=5pt, raise=2pt}}
    ]
        
        \coordinate (A) at (0,0);
        \coordinate (X1) at (1,0);
        \coordinate (X2) at (2.5,0);
        \coordinate (X3) at (4,0);
        \coordinate (Xj1) at (5.5,0);
        \coordinate (Xj2) at (7,0);
        \coordinate (XN1) at (8.5,0);
        \coordinate (XN2) at (10,0);
        \coordinate (B) at (11,0);

        \draw[thick, dotted] (A) -- (X1);
        \draw[ultra thick] (X1) -- (X3);
        \draw[thick, dotted] (X3) -- (Xj1);
        \draw[ultra thick] (Xj1) -- (Xj2);
        \draw[thick, dotted] (Xj2) -- (XN1);
        \draw[ultra thick] (XN1) -- (XN2);
        \draw[thick, dotted] (XN2) -- (B);

        \node[dot, label=below:{$x=a$}] at (A) {};
        \node[dot, label=below:{$x_{\frac{1}{2}}$}] at (X1) {};
        \node[dot, label=below:{$x_{\frac{3}{2}}$}] at (X2) {};
        \node[dot, label=below:{$x_{\frac{5}{2}}$}] at (X3) {};
        \node[dot, label=below:{$x_{j-\frac{1}{2}}$}] at (Xj1) {};
        \node[dot, label=below:{$x_{j+\frac{1}{2}}$}] at (Xj2) {};
        \node[dot, label=below:{$x_{N-\frac{1}{2}}$}] at (XN1) {};
        \node[dot, label=below:{$x_{N+\frac{1}{2}}$}] at (XN2) {};
        \node[dot, label=below:{$x=b$}] at (B) {};

        \draw[brace] (A) -- (X1) node[midway, above=6pt] {$\tilde{I}_0$};
        \draw[brace] (X1) -- (X2) node[midway, above=6pt] {$I_1$};
        \draw[brace] (X2) -- (X3) node[midway, above=6pt] {$I_2$};
        \draw[brace] (Xj1) -- (Xj2) node[midway, above=6pt] {$I_j$};
        \draw[brace] (XN1) -- (XN2) node[midway, above=6pt] {$I_N$};
        \draw[brace] (XN2) -- (B) node[midway, above=6pt] {$\tilde{I}_{N+1}$};

    \end{tikzpicture}
    \caption{Domain decomposition}
    \label{Domain decomposition}
\end{figure}

\indent Let $P^k(I_j)$ be the space of polynomials of degree at most $k\ge0$ on $I_j$, and the DG finite-element space is defined as
\begin{equation}
V_h^k = \left\{ v \colon v|_{I_j} \in P^k(I_j), \ j = 1, \cdots, N \right\}.
\end{equation}
The semi-discrete DG method for solving $\eqref{1D Scalar Equation}$ is defined as follows: find the unique function $u_h(\cdot,t)\in V_h^k$ such that for all test functions $v_h\in V_h^k$ and all $1\le j\le N$, we have

\begin{equation}\label{semi}
\int_{I_j} (u_h)_t v_h \, dx - \int_{I_j} f(u_h) (v_h)_x \, dx + \hat{f}_{j+1/2} (v_h)_{j+1/2}^-  - \hat{f}_{j-1/2} (v_h)_{j-1/2}^+ = 0.
\end{equation}
Unless otherwise specified, we generally use the Lax-Friedrichs flux at the cell interface
\begin{equation}
\begin{aligned}
\hat{f}_{j+1/2} =& \hat{f}^{\,\text{LF}}\left( (u_h)_{j+1/2}^-, (u_h)_{j+1/2}^+ \right)\\ 
=& \frac{1}{2} \left( f((u_h)_{j+1/2}^-) + f((u_h)_{j+1/2}^+) - \alpha ((u_h)_{j+1/2}^+ - (u_h)_{j+1/2}^- ) \right), 
\end{aligned}
\end{equation}
where $(u_h)_{j+1/2}^-$ (resp. $(u_h)_{j+1/2}^+$) denote the limit value of $u_h$ at $x_{j+1/2}$ from the element $I_j$ (resp. $I_{j+1}$), and $\alpha = \max_u |f'(u)|$.

The semi-discrete DG scheme $\eqref{semi}$ can be rewritten as the first-order ordinary differential equation system $u_t =\mathcal{L}(u)$, where the operator $\mathcal{L}(u)$ arises from the spatial discretization. The third-order TVD Runge-Kutta method is applied for the time discretization to evolve the solution from time $t_n$ to $t_{n+1} = t_n + \Delta t$:
\begin{equation}\label{RK}
\begin{cases}
u^{(1)} = u^n + \Delta t \mathcal{L}(u^n), \\
u^{(2)} = \frac{3}{4} u^n + \frac{1}{4} u^{(1)} + \frac{1}{4} \Delta t \mathcal{L}(u^{(1)}), \\
u^{n+1} = \frac{1}{3} u^n + \frac{2}{3} u^{(2)} + \frac{2}{3} \Delta t \mathcal{L}(u^{(2)}).
\end{cases}
\end{equation}
As demonstrated in \cite{rkdg2}, to ensure the stability of the RKDG method with $P^k$ elements for periodic problems, the time step $\Delta t$ must satisfy: $\Delta t \le \frac{1}{2k+1} \frac{h}{\alpha}$. For the hyperbolic problem \eqref{1D Scalar Equation} with Dirichlet boundary conditions, \cite{carpenter1995} indicates that the following temporal correction to the boundary conditions is necessary to maintain the third-order accuracy of \eqref{RK}:
\begin{equation}\label{eq:bc_rk}
\left\{
\begin{array}{l}
u^n \sim g(t_n), \\
u^{(1)} \sim g(t_n) + \Delta t g'(t_n), \\
u^{(2)} \sim g(t_n) + \frac{\Delta t}{2} g'(t_n) + \frac{(\Delta t)^2}{4} g''(t_n).
\end{array}
\right.
\end{equation}

Notably, the standard RKDG scheme can be directly applied to compute numerical solutions on the two cut cells $\tilde{I}_0$ and $\tilde{I}_{N+1}$. However, to ensure stability, the time step $\Delta t$ would be restricted to a scale proportional to $\delta=\min(\delta_1, \delta_2)$. If $\delta$ is extremely small, this leads to the ``small-cell'' problem. 
To address this, we develop the inverse Lax-Wendroff (ILW) method to construct polynomials on $\tilde{I}_0$ and $\tilde{I}_{N+1}$ via boundary treatment, and further define the numerical fluxes $\hat{f}_{1/2}$ and $\hat{f}_{N+1/2}$. Consequently, the scheme can adopt the same time step as the standard DG method for periodic problems, meaning $\Delta t$ depends only on $h$ and is independent of $\delta_{1,2}$.
Our algorithm is incorporated at each stage of the Runge-Kutta time discretization using the corresponding boundary conditions \eqref{eq:bc_rk}.
For simplicity, in the following discussion we illustrate our boundary treatment design by considering the spatial semi‑discrete scheme with boundary condition $g(t)$, and the extension to the fully discrete Runge--Kutta scheme follows directly.

\subsection{(S)ILW boundary treatment at the inflow boundary} \label{subsec_SILW}
In this subsection, we consider the (S)ILW boundary treatment for the 1D scalar conservation law $\eqref{1D Scalar Equation}$. We focus on the inflow boundary at $x=a$ and the associated small cell $\tilde{I}_0 = [a, a+\delta]$. The standard DG method would be used on the interior elements, and the (S)ILW procedure is employed to construct a reconstruction polynomial $p(x,t)$ on $\tilde{I}_0$. This polynomial is then used to define the numerical flux at the computational boundary $x_{1/2}$:
$$\hat{f}_{1/2} = \hat{f} \left( p \left( x_{1/2}^-, t \right), u_h \left( x_{1/2}^+, t \right) \right).$$

\subsubsection{Revisiting the (S)ILW Methods}\label{subsec_SILW_ori}
In contrast to the Lax-Wendroff scheme, which expresses time derivatives in terms of spatial derivatives, the ILW method utilizes the governing equation to convert time derivatives into spatial derivatives. Consequently, high-order spatial derivatives at $x = a$ are obtained by repeatedly using the PDE and the boundary conditions. For example,
\begin{equation}
\begin{cases}
u|_{x=a} = g(t), \\
\partial_x u|_{x=a} = \frac{g'(t)}{-f'(g(t))}, \\
\partial_x^{2} u|_{x=a} = \frac{f'(g(t)) g''(t) - 2 f''(g(t)) g'(t)^2}{f'(g(t))^3}, \\
\quad \vdots
\end{cases}
\end{equation}

The polynomial $p(x,t) \in P^k$ is determined by matching the spatial derivatives at the boundary $x=a$:
$$\partial_x^{m} p|_{x=a} = \partial_x^{m} u|_{x=a}, \quad m =0,1,\cdots,k, $$
which yields the Taylor expansion:
\begin{equation}\label{ILW}
p(x,t) = \sum_{m=0}^{k} \frac{(x-a)^m}{m!} \partial_x^{m} u|_{x=a}.
\end{equation}
While achieving arbitrary accuracy, this approach leads to complex high-order derivatives. To avoid the heavy algebraic manipulations involved, we can employ a simplified ILW (SILW) method, which utilizes information from the interior domain.

Define the cell averages of the numerical solution and its derivative as:
$$(\bar{u}^{(m)})_j= \frac{1}{h}\int_{I_j} \partial_x^{m} u_h(x) dx,\quad m=0,1,2,\cdots,k.$$
The SILW1 procedure determines the unique polynomial $p(x,t) \in P^k$ on $\tilde{I}_0$ by satisfying:
\begin{equation}\label{SILW1}
\begin{cases}
\partial_x^{m} p|_{x=a} = \partial_x^{m} u|_{x=a}, \quad m = 0, 1, \cdots, k-1,\\
\int_{I_1} p(x,t) \, dx = h (\bar{u}^{(0)})_1.
\end{cases}
\end{equation}
Similarly, the SILW2 polynomial is defined by matching $k-1$ boundary derivatives and two interior integral quantities:
\begin{equation}\label{SILW2}
\begin{cases}
\partial_x^{m} p|_{x=a} = \partial_x^{m} u|_{x=a}, \quad m = 0, 1, \cdots, k-2, \\
\int_{I_1} p(x, t) \, dx = h (\bar{u}^{(0)})_1, \\ \int_{I_1} \partial_xp(x, t) \, dx = h(\bar{u}^{(1)})_1.
\end{cases}
\end{equation}

These SILW boundary treatments can avoid the small-cell problem, and their stability was analyzed via eigenvalue spectrum visualization in \cite{yangdgilw}.
However, numerical experiments show that the error magnitudes of ILW \eqref{ILW} and SILW \eqref{SILW1}-\eqref{SILW2}, are all sensitive to $\delta/h$.  As a test example, we solve \eqref{1D Scalar Equation} with $f(u)=u, a=0, b=2\pi, u_0(x)=-\sin(x)$, and $g(t)=\sin(t)$ using $N=320$ cells and $P^k$ elements for $k=1, 2, 3$. We set the right boundary aligning with the mesh that is $\delta_2=0$ and the left cut cell with length $\delta_1 = \delta \in [0, h)$. The $L^2$ errors on the computational domain $\widetilde{\Omega}$ at the final time $T = 3$ are shown in Fig. \ref{error_ori}, revealing that the error increases significantly as $\delta/h$ approaches 1 for $k \ge 1$. It was found in \cite{yangdgilw} that enforcing local conservation on $\tilde{I}_0$ can slow down the error growth. However, this process requires introducing boundary derivatives of one order higher. Moreover, the procedure itself is too complicated and has thus only been applied to one-dimensional scalar problems and linear systems. Indeed, through this conservative correction, errors caused by the boundary treatment are diminished; in particular, under special circumstances, the accuracy of the boundary treatment is enhanced by one order. Inspired by this finding, one potential strategy to restrain error growth is to use a higher-order reconstructed polynomial.

\begin{figure}[htbp]
    \centering
    \begin{subfigure}{0.32\textwidth}
        \centering
        \includegraphics[width=\textwidth]{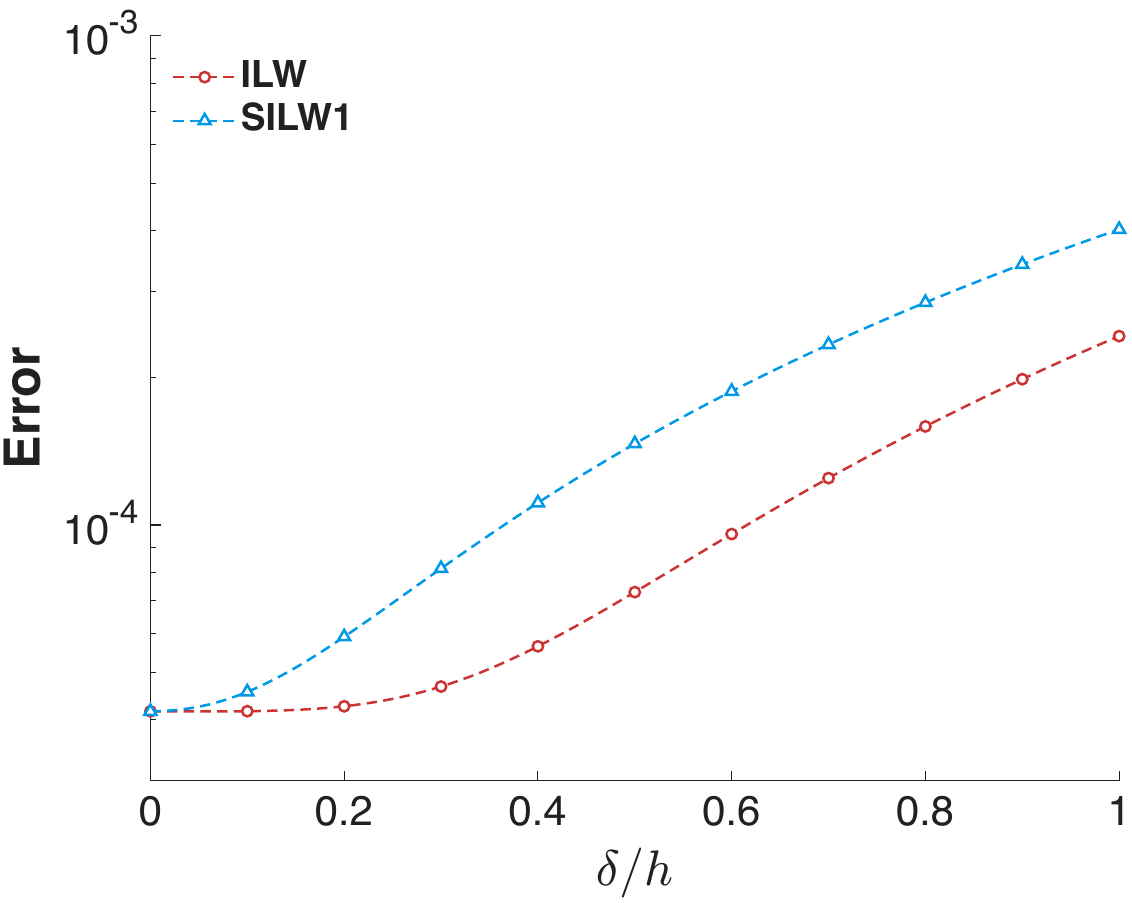}
        \caption{$P^1$}
    \end{subfigure}
    \hfill
    \begin{subfigure}{0.32\textwidth}
        \centering
        \includegraphics[width=\textwidth]{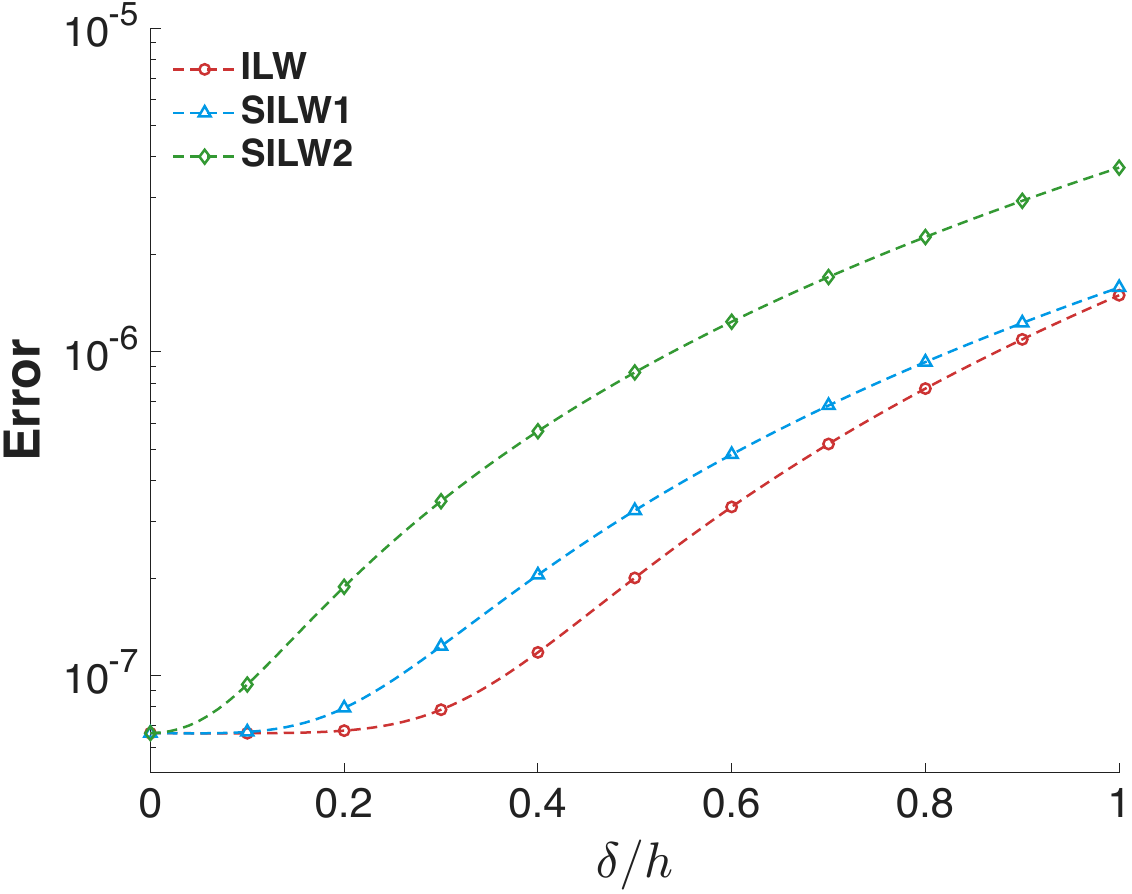}
        \caption{$P^2$}
    \end{subfigure}
    \hfill
    \begin{subfigure}{0.32\textwidth}
        \centering
        \includegraphics[width=\textwidth]{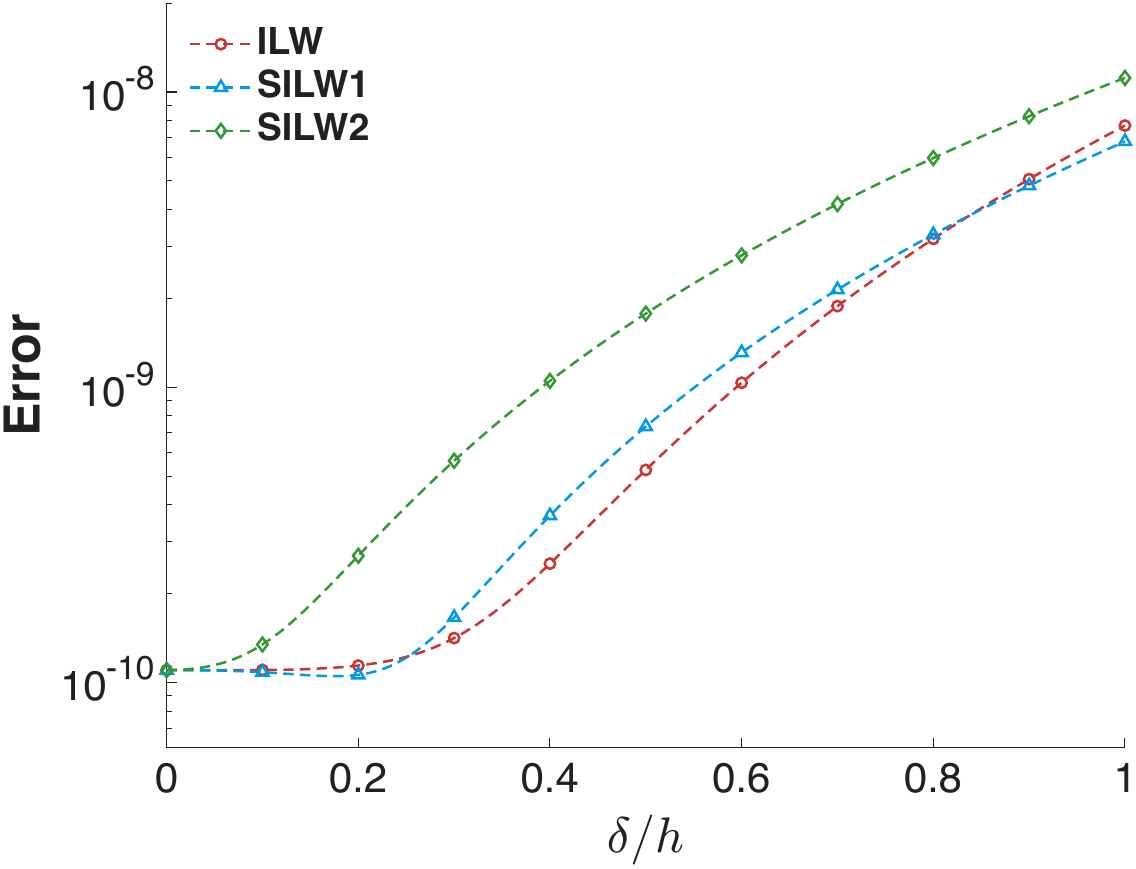}
        \caption{$P^3$}
    \end{subfigure}
    \caption{Errors of the test example with different $\delta/h$.}
    \label{error_ori}
\end{figure}
\FloatBarrier

\subsubsection{One-order-higher boundary reconstruction}
This straightforward procedure simply increases the accuracy order by one. Specifically, for a standard DG method using $P^k$ elements on interior cells, we employ the (S)ILW procedure to construct a reconstruction polynomial $p(x,t) \in P^{k+1}(\tilde{I}_0)$.

 We define ILW+, SILW1+ and SILW2+ as applying the one-order-higher (S)ILW boundary treatments \eqref{ILW}-\eqref{SILW2}. For ILW+, we need one more boundary term to construct the Taylor expansion
\begin{equation}
p(x,t) = \sum_{m=0}^{k+1} \frac{(x-a)^m}{m!} \partial_x^{m} u|_{x=a}.
\end{equation}
The SILW1+ procedure determines the unique polynomial $p(x,t) \in P^{k+1}$ on $\tilde{I}_0$ by satisfying:
\begin{equation}\label{SILW1+}
\begin{cases}
\partial_x^{m} p|_{x=a} = \partial_x^{m} u|_{x=a}, \quad m = 0, 1, \cdots, k,\\
\int_{I_1} p(x,t) \, dx = h (\bar{u}^{(0)})_1.
\end{cases}
\end{equation}
The SILW2+ polynomial is defined by matching $k$ boundary derivatives and two interior integral quantities:
\begin{equation}\label{SILW2+}
\begin{cases}
\partial_x^{m} p|_{x=a} = \partial_x^{m} u|_{x=a}, \quad m = 0, 1, \cdots, k-1, \\
\int_{I_1} p(x, t) \, dx = h (\bar{u}^{(0)})_1, \\ \int_{I_1} \partial_xp(x, t) \, dx = h(\bar{u}^{(1)})_1.
\end{cases}
\end{equation}
We apply the proposed method to the same test problem in Section \ref{subsec_SILW_ori}, and present the errors as a function of $\delta/h$ after applying the one-order-higher (S)ILW boundary treatment in Fig. \ref{error_add}. Compared with the original results, the magnitude of the error no longer exhibits significant growth as $\delta/h$ approaches $1$.

\begin{figure}[htbp]
    \centering
    \begin{subfigure}{0.32\textwidth}
        \centering
        \includegraphics[width=\textwidth]{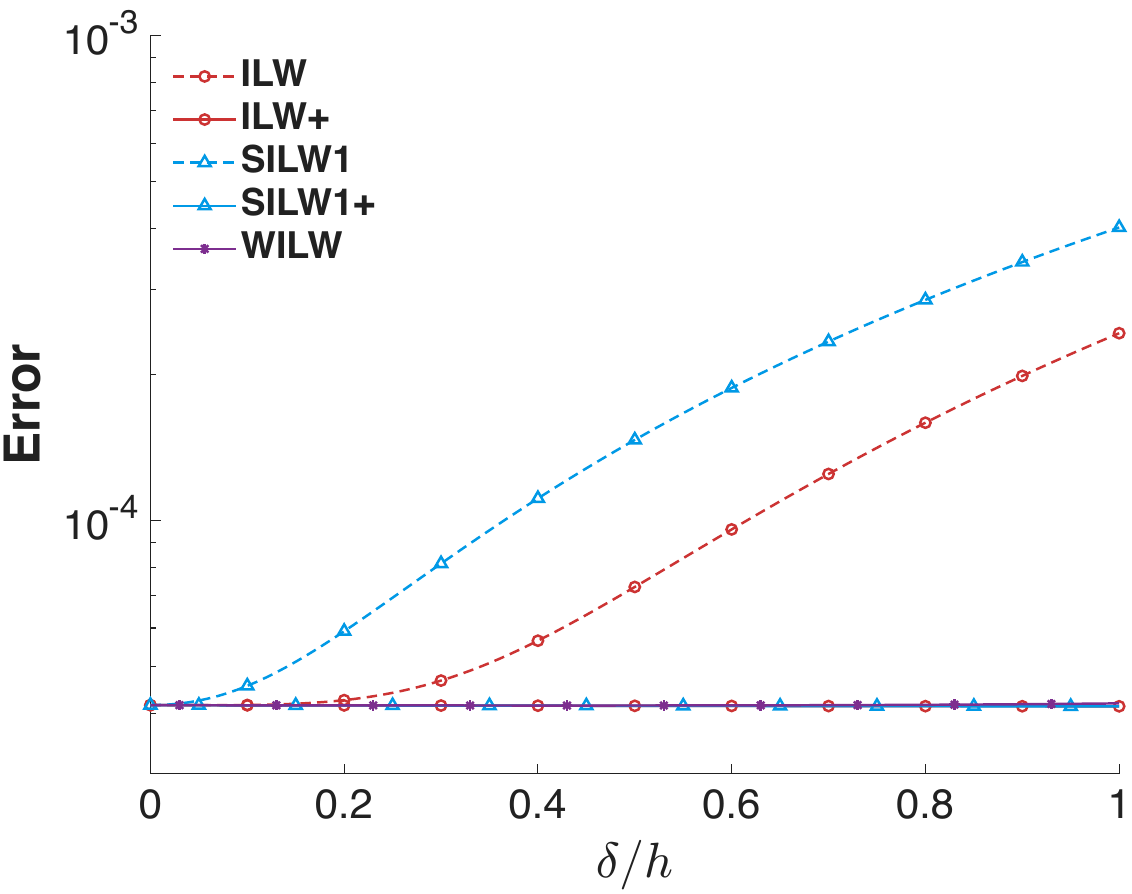}
        \caption{$P^1$}
    \end{subfigure}
    \hfill
    \begin{subfigure}{0.32\textwidth}
        \centering
        \includegraphics[width=\textwidth]{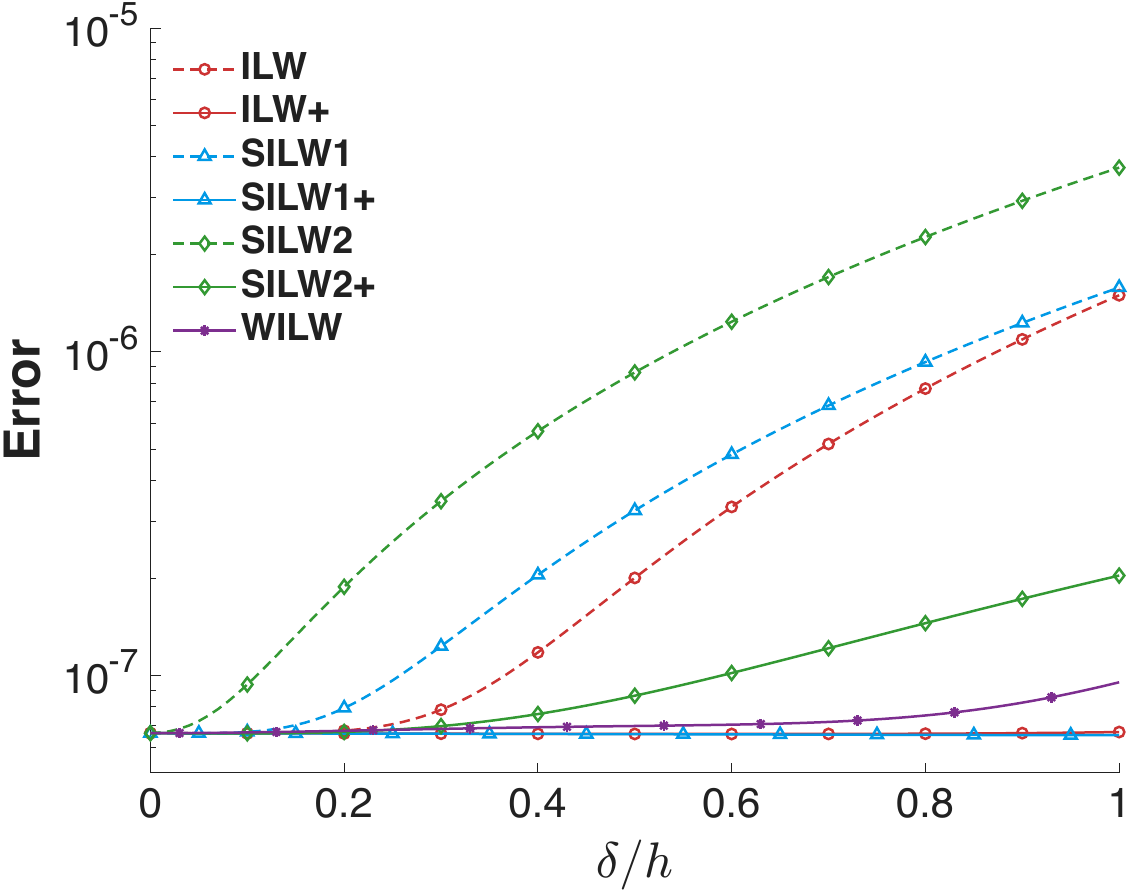}
        \caption{$P^2$}
    \end{subfigure}
    \hfill
    \begin{subfigure}{0.32\textwidth}
        \centering
        \includegraphics[width=\textwidth]{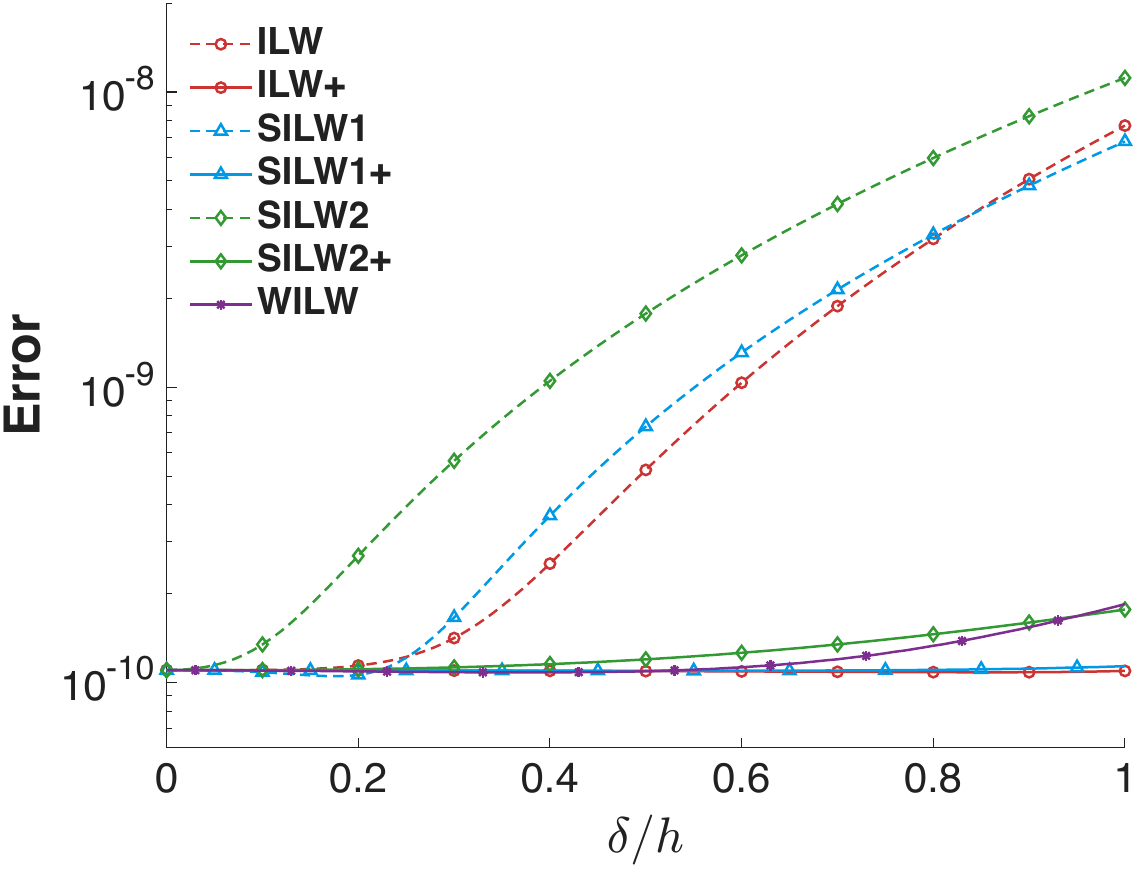}
        \caption{$P^3$}
    \end{subfigure}
    \caption{Errors of the test example with different $\delta/h$. ``+" represents the (S)ILW boundary treatment with one-order-higher accuracy.}
    \label{error_add}
\end{figure}

However, adding boundary derivatives increases computational cost. Next, we propose a new SILW method that uses information from two interior cells, minimizing the need for high-order boundary derivatives. 

We remark that, using one additional piece of information from the interior cell instead of the boundary derivatives makes the scheme more efficient, but potentially unstable. For example, when $k=1$, we can reconstruct $p(x,t)\in P^2$ satisfying 
\begin{equation*}
p|_{x=a} = u|_{x=a}, \quad 
\int_{I_1} p(x, t) \, dx = h (\bar{u}^{(0)})_1, \quad 
\int_{I_1} \partial_xp(x, t) \, dx = h(\bar{u}^{(1)})_1.
\end{equation*}
However, the scheme becomes unstable as $\delta$ approaching 0.

\subsection{Weighted ILW boundary treatment}
As discussed above, using fewer boundary terms $\partial^{(m)}_x u|_{x=a}$ enhances the efficiency of the scheme. However, extracting more information from a single interior cell without extra boundary terms yields instability. To balance the efficiency and stability of the algorithm, we incorporate information from two interior cells and determine the required degrees of freedom using a stable least squares method.

For example, considering $k=2$, our scheme requires constructing a cubic polynomial on $\tilde{I}_0$. We impose one boundary constraint along with two constraints each from $I_1$ and $I_2$ as follows:
\begin{equation}\label{constraints(k=2)}
\begin{cases}
p(a,t) = u|_{x=a}, \\
\int_{I_i} \partial_x^{m} p(x,t) \, dx = h (\bar{u}^{(m)})_i,\quad m=0,1,~i=1,2.
\end{cases}
\end{equation}
Compared with SILW1+ \eqref{SILW1+} and SILW2+ \eqref{SILW2+}, this construction reduces the number of required boundary derivatives by two and one, respectively. However, numerical results show that the fully discrete DG scheme is unstable if solving this overdetermined system directly via least squares. 

We observe that using cell averages of higher-order derivatives of the interior cells imposes more stringent stability requirements on the algorithm. Therefore, we assign appropriate positive weights $C_1$, $C_2$, $\tilde C_1$ and $\tilde C_2$ to the constraint equations and determine the reconstruction polynomial $p(x,t)\in P^{3}(\tilde{I}_0)$ by
\begin{align*}
    p(x,t)=\operatorname*{argmin}_{q(x,t)\in P^3}&\left\{
    (q(a,t)-u(a,t))^2 + \sum_{i=1}^2C_i\left(\int_{I_i}q(x,t)\,dx - h(\bar{u}^{(0)})_i\right)^2 \right.\\
    &\left. + \sum_{i=1}^2\tilde{C}_i\left(\int_{I_i}\partial_x q(x,t)\,dx - h(\bar{u}^{(1)})_i\right)^2
    \right\}.
\end{align*}
This leads to a new ILW method: the weighted ILW (WILW) method. Note that the choice of weights is not unique. Numerical experiments indicate that weight choices of the form $[C_1, C_2, \tilde C_1, \tilde C_2]=[\mathcal O(1), \mathcal O(1), \mathcal O(h^2), \mathcal O(h^2)]$, mitigating the impact of higher‑order derivatives, often achieve the desired stability and accuracy properties.

Through our experiments, the proposed idea can be extended to schemes with different $k$, yielding stable and efficient results. For the case  $k=1$, we utilize the constraints $u(a,t)$, $(\bar{u}^{(0)})_1$, and $(\bar{u}^{(0)})_2$ to construct $p(x,t)\in P^2(\tilde{I}_0)$:
\begin{equation}\label{constraints(k=1)}
\begin{cases}
p(a,t) = u|_{x=a}, \\
\int_{I_i} p(x,t) \, dx = h (\bar{u}^{(0)})_i,\quad i=1,2.
\end{cases}
\end{equation}
In this case, weighting is not required. For $k=3$, we utilize the constraints $u(a,t)$, $u_x(a,t)$, and $(\bar{u}^{(m)})_i,\, m=0,1,\, i=1,2$, to construct $p(x,t)\in P^4(\tilde{I}_0)$:
\begin{equation}\label{constraints(k=3)}
\begin{cases}
\partial_x^{m} p|_{x=a} = \partial_x^{m} u|_{x=a}, \quad m = 0, 1,\\
\int_{I_i} \partial_x^{m} p(x,t) \, dx = h (\bar{u}^{(m)})_i,\quad m=0,1,~i=1,2.
\end{cases}
\end{equation}
We assign $\mathcal{O}(1)$ weights to the equations for $(\bar{u}^{(0)})_1$ and $(\bar{u}^{(0)})_2$, and $\mathcal{O}(h^2)$ weights to those for $(\bar{u}^{(1)})_1$ and $(\bar{u}^{(1)})_2$. To avoid introducing adjustable parameters, we adopt the simplest choice $[1,1,h^2,h^2]$ in all numerical examples. Although further tuning of  the $\mathcal{O}(h^2)$ coefficients may yield better results, this choice is already sufficient to meet expectations. In fact, for $k=4$ and higher orders, similarly incorporating high-order derivative information from $I_1$ and $I_2$ while reducing the reliance on boundary information under the same weighting strategy also yields good numerical results.

For the same test example, Fig. \ref{error_add} illustrates the errors versus $\delta/h$ using the higher-order WILW treatment. In comparison, the WILW method shows good numerical performance but requires less computational effort for high-order spatial derivatives.

\subsection{Boundary treatment at the outflow boundary}\label{1D Outflow}
Next, we discuss the treatment of outflow boundary. Characteristic theory implies downwind information does not affect the interior of the domain. For scalar equations, the numerical flux $\hat{f}_{N+1/2}=f(u_h(x_{N+1/2}^-),t)$ uses only interior values.Note that for systems, inflow and outflow conditions are coupled according to the signs of the Jacobian eigenvalues. Consequently, we must obtain information for the outflow components at the boundary.

At the outflow boundary $x=b$, the solution is typically approximated by extending the interior polynomial $u_h$ from $I_N$ to $\tilde I_{N+1} = [x_{N+1/2}, b]$:
\begin{equation}
p(x, t) = \left. u_h \right|_{I_N}(x), \quad x \in \tilde I_{N+1}. 
\end{equation}
However, direct extension still makes numerical errors sensitive to $\delta/h$, especially for systems. Since inflow and outflow conditions mix at the boundary, errors from outflow variables can transmit into inflow variables, through the boundary treatment, even though the inflow part prevents significant error growth. Therefore, we follow the WILW approach and reconstruct $p(x,t) \in P^{k+1}(\tilde{I}_{N+1})$ using information from cells $I_N$ and $I_{N-1}$.

For the case $k=1$, we employ the constraints $\{ (\bar{u}^{(m)})_i \}_{i=N-1,N}^{m=0,1}$:
\begin{equation}
\int_{I_i}  \partial_x^{m} p(x,t) \, dx = h (\bar{u}^{(m)})_i, \quad m=0,1, \quad i=N-1,N.
\end{equation}
These constraints are weighted by $\mathcal{O}(1)$ for $m=0$ and $\mathcal{O}(h^2)$ for $m=1$.
For $k=2$, we impose the constraints $\{ (\bar{u}^{(m)})_i \}_{i=N-1,N}^{m=0,1,2}$:
\begin{equation}
\int_{I_i}  \partial_x^{m} p(x,t) \, dx = h (\bar{u}^{(m)})_i, \quad m=0,1,2, \quad i=N-1,N.
\end{equation}
The corresponding weights are $\mathcal{O}(1)$, $\mathcal{O}(h)$, and $\mathcal{O}(h^2)$ for $m=0, 1, 2$, respectively.
For $k=3$, we use $\{ (\bar{u}^{(m)})_i \}_{i=N-1,N}^{m=0,1,2,3}$:
\begin{equation}
\int_{I_i}  \partial_x^{m} p(x,t) \, dx = h (\bar{u}^{(m)})_i, \quad m=0,1,2,3, \quad i=N-1,N.
\end{equation}
The weights are $\mathcal{O}(1)$ for $m=0,1$, $\mathcal{O}(h)$ for $m=2$, and $\mathcal{O}(h^2)$ for $m=3$.

More generally, we utilize information from both $I_{N-1}$ and $I_N$, where lower-order derivative constraints are assigned larger weights, while higher-order ones are assigned progressively smaller weights. Finally, the numerical solution $u_h$ on $\tilde{I}_{N+1}$ is defined by $p(x,t)$.

\subsection{Stability analysis}
In this section, we analyze the linear stability of the WILW methods for both semi-discrete and fully discrete cases through eigenvalue spectrum analysis of the corresponding discretized operators \cite{eigenvalue_spectrum}. To simplify the discussion on inflow treatment, we consider a linear scalar conservation law by setting $f(u) = u$, $a = 0$, and $g(t) = 0$ in \eqref{1D Scalar Equation}. The mesh configuration follows \eqref{1Dmesh} with $\delta_1 = \delta \in [0, h)$ and $\delta_2 = 0$. We employ the DG schemes with $V_h^k$ ($k = 1, 2, 3$) and use the standard upwind flux for interior interfaces: $\hat{u}_{j+1/2} = u_h(x_{j+1/2}^-)$ for $j = 1, \dots, N$. The boundary flux $\hat{u}_{1/2}$ is then determined by the proposed WILW methods.

For the DG schemes with $V_h^k$, we define $k + 1$ equally spaced nodes in each cell $I_j$:
$$\hat{x}_{j}^{(i)} = x_{j} + \frac{2i - k}{2(k + 1)}h, \quad i = 0,1,\cdots, k.$$
Using a local Lagrange basis, the standard semi-discrete DG scheme \eqref{semi} with the upwind flux at interior interfaces can be written as
\begin{equation}
    \frac{\mathrm{d}\mathbf{U}_{j}}{\mathrm{d}t} = \frac{1}{h} \left( \mathbf{A} \mathbf{U}_{j-1} + \mathbf{B} \mathbf{U}_{j} \right), \quad j = 2,3, \cdots, N,
\end{equation}
where $\mathbf{U}_{j}$ denotes the vector of nodal values of $u_h$ at the local nodes $\{\hat x_j^{(i)}\}_{i=0}^k$ on $I_j$. The matrices $\mathbf A$ and $\mathbf B$ are constant matrices of size $(k+1) \times (k+1)$.

For the proposed WILW method, the numerical flux $\hat{u}_{1/2}$ is determined by $u_h$ in $I_1$ , $I_2$ and the zero boundary condition. The semi-discrete scheme on $I_1$ can then be written as:
$$\frac{\mathrm{d}\mathbf{U}_1}{\mathrm{d}t} = \frac{1}{h}(\mathbf{C} \mathbf{U}_{1}+\mathbf{D} \mathbf{U}_{2}),$$
where $\mathbf{C}$ and $\mathbf{D}$ are $(k+1) \times (k+1)$ constant matrices. Finally, the semi-discrete scheme yields a linear system in matrix-vector form:
\begin{equation}\label{ESV_ODE1}
    \frac{\mathrm{d}\mathbf{U}}{\mathrm{d}t} = \frac{1}{h} \mathbf{Q} \mathbf{U},
\end{equation}
where $\mathbf{U} = ( \mathbf{U}_1^{\top}, \mathbf{U}_2^{\top}, \dots, \mathbf{U}_N^{\top} )^{\top}$ is the vector of nodal values, and the block matrix $\mathbf{Q}$ is defined as:
\begin{equation}\label{Q} 
\mathbf{Q} = \begin{pmatrix}
\mathbf{C} & \mathbf{D} & & & \\
\mathbf{A} & \mathbf{B} & & & \\
 & \mathbf{A} & \mathbf{B} & & \\
 & & \ddots & \ddots & \\
 & & & \mathbf{A} & \mathbf{B}
\end{pmatrix}.
\end{equation}
This system incorporates both the interior discretization and the inflow boundary treatment.

We apply the normal mode analysis to \eqref{ESV_ODE1} to obtain the eigenvalue problem. Assuming a
solution of the form $u(x,t) = e^{st}u_0(x)$ and $\tilde{s} =hs$, the semi-discrete scheme yields
$$\tilde{s}\mathbf{U}=\mathbf{Q}\mathbf{U},$$
indicating $\tilde{s}$ is the eigenvalue of $\mathbf{Q}$. The semi-discrete scheme with the prescribed boundary conditions is stable if the entire eigenvalue spectrum of the coefficient matrix $\mathbf{Q}$ lies in the left half-plane, i.e., $Re(\tilde{s})\le0$. 

In contrast to the SILW method, whose coefficient matrix is block lower triangular, i.e., $\mathbf{D}=\mathbf{0}$ in \eqref{Q}, the matrix $\mathbf{Q}$ of the WILW method is no longer strictly block lower triangular. Owing to the block structure of $\mathbf{Q}$, its determinant can be factorized as 
$$\det(\mathbf{Q})=\det
\begin{pmatrix}
\mathbf{C} & \mathbf{D} \\
\mathbf{A} & \mathbf{B}
\end{pmatrix}
\cdot\det
\begin{pmatrix}
\mathbf{B} &  \\
\mathbf{A} & \mathbf{B} \\
 & \ddots & \ddots \\
 &  & \mathbf{A} & \mathbf{B}
\end{pmatrix}.$$
Notice that the matrix $\mathbf{B}$ corresponds to the standard DG discretization for interior cells, and its eigenvalues $\kappa^B$ are well-known to satisfy the stability condition. Therefore, we only need to focus on the eigenvalues of the coupled boundary matrix:
\begin{equation}\label{CDAB}
\begin{pmatrix}
\mathbf{C} & \mathbf{D}\\
\mathbf{A} &\mathbf{B}\end{pmatrix}.
\end{equation} 

Another distinction is that, for $k=2,3$, the WILW method employs weighted least-squares reconstruction. The resulting coefficients depend not only on the ratio $\eta = \delta/h \in [0,1)$ but also explicitly on the mesh size $h$. To elucidate this dependency, a detailed explanation is provided for the $k = 2$ case. According to \eqref{constraints(k=2)}, the reconstruction polynomial is given by  
$$p(x,t) = u(a,t) + R(t)(x-a) + Q(t)(x-a)^2 + S(t)(x-a)^3,$$
where the coefficients $R(t)$, $Q(t)$, and $S(t)$ are determined via a weighted least-squares procedure. Under zero boundary condition, the interface value can be expressed as a linear combination of the cell moments:  
$$\hat{u}_{1/2}=p(x_{1/2}, t)=\delta R(t)+\delta^2 Q(t)+\delta^3S(t) = 
L_1 (\bar{u}^{(0)})_1+L_2 h(\bar{u}^{(1)})_1+L_3 (\bar{u}^{(0)})_2+L_4 h(\bar{u}^{(1)})_2.$$
Here, the coefficients $L_1, L_2, L_3,$ and $L_4$ depend on both $\eta$ and $h$ as shown in Appendix \ref{MatrixQ}. This implies that the $3 \times 3$ matrices $\mathbf{C}$ and $\mathbf{D}$ depend on both $h$ and $\eta$. The scheme is regarded as stable if $Re(\tilde{s})\leq 0$ for all $\eta \in[0,1)$ and $h>0$.

The following theorem establishes the semi-discrete linear stability of the DG method with WILW boundary treatment.

\begin{theorem}
    For the linear scalar conservation law \eqref{1D Scalar Equation}, all eigenvalues of the semi-discrete matrix $\mathbf{Q}$ of the DG method with WILW boundary treatment
    for $k=1,2,3$ lie in the left half-plane. 
    Hence, the semi-discrete scheme \eqref{ESV_ODE1} is stable.
\end{theorem}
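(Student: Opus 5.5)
The plan is to reduce the spectrum of $\mathbf{Q}$ to two pieces using the determinant factorization already recorded above. The eigenvalues of $\mathbf{Q}$ are the roots of $\det(\tilde{s}\mathbf{I}-\mathbf{Q})$. Since $\tilde{s}\mathbf{I}-\mathbf{Q}$ has the same block structure as $\mathbf{Q}$, this characteristic polynomial factorizes as
$$\det\begin{pmatrix}\tilde{s}\mathbf{I}-\mathbf{C} & -\mathbf{D}\\ -\mathbf{A} & \tilde{s}\mathbf{I}-\mathbf{B}\end{pmatrix}\cdot \det(\tilde{s}\mathbf{I}-\mathbf{B})^{N-2}.$$
The reason is that rows and columns from the third block onward form a block lower-triangular tail, and the only coupling to it is through $\mathbf{A}$ below the diagonal. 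I will state this carefully via a Schur-type block elimination, since the factorization written before the theorem is for $\det\mathbf{Q}$ itself.

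The second factor involves only the standard upwind DG cell matrix $\mathbf{B}$. Its eigenvalues are known to have negative real part; this is the fact quoted before the theorem. For completeness, I would recall the energy argument: with the inflow value set to zero, the single-cell upwind DG operator satisfies $\frac{d}{dt}\|u_h\|^2_{L^2(I_j)} = -(u_h^-)_{j+1/2}^2 - (u_h^+)_{j-1/2}^2 \le 0$, and equality forces $u_h\equiv0$. So $\mathrm{Re}\,\kappa^B<0$.

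It remains to show that all $2(k+1)$ eigenvalues of the $2(k+1)\times 2(k+1)$ matrix $\mathbf{M}(\eta,h)=\begin{pmatrix}\mathbf{C}&\mathbf{D}\\ \mathbf{A}&\mathbf{B}\end{pmatrix}$ satisfy $\mathrm{Re}\le0$. I would compute $\mathbf{C}$ and $\mathbf{D}$ explicitly. They arise from the upwind cell matrices, with the boundary term $\hat u_{1/2}$ replaced by the WILW interface value. In the Lagrange basis that value equals $\sum_i L_i\,\ell_i(\mathbf{U}_1,\mathbf{U}_2)$, with the $L_i(\eta,h)$ as in Appendix \ref{MatrixQ}, so $\mathbf{C}=\mathbf{B}+\mathbf{e}\,\mathbf{l}_1^\top$ and $\mathbf{D}=\mathbf{e}\,\mathbf{l}_2^\top$ are rank-one perturbations.

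The parameters are handled case by case.
\begin{itemize}
\item For $k=1$ no weights enter, so $\mathbf{M}$ depends only on $\eta\in[0,1)$.
\item For $k=2,3$ the weights $[1,1,h^2,h^2]$ together with the scaling $h(\bar u^{(1)})_i$ make the least-squares normal equations depend on $h$ only through $h$ itself. After the rescaling $x-a=h\xi$, the dependence on $h$ appears as an explicit rational function of $h$, and there is a regular limit as $h\to0$ and as $h\to\infty$.
\end{itemize}
I would then verify stability on $(\eta,h)\in[0,1)\times(0,\infty)$, preferably on the compactified parameter $\tau=h/(1+h)\in[0,1]$ with $\eta\in[0,1]$.

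The main obstacle is this verification, since a closed-form eigenvalue bound for an $8\times8$ (case $k=3$) parameter-dependent matrix is out of reach. My first attempt is a Routh--Hurwitz test. Write $\chi(\tilde s)=\det(\tilde s\mathbf{I}-\mathbf{M})$, with coefficients rational in $(\eta,\tau)$. Stability then follows from positivity of the Hurwitz determinants, which are polynomials in $(\eta,\tau)$, and that positivity can be checked rigorously by sign analysis or interval arithmetic on the unit square. The boundary $\eta\to1$ needs separate attention, since that is where the earlier schemes degrade; I would confirm that the coefficients remain bounded there. A possible zero eigenvalue at a corner would be treated by factoring it out.

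As a fallback, and as a sanity check, I would plot the spectrum of $\mathbf{M}$ over a fine grid in $(\eta,h)$, as is standard in the ILW stability literature. The theorem then follows: every eigenvalue of $\mathbf{Q}$ is either an eigenvalue of $\mathbf{B}$ or an eigenvalue of $\mathbf{M}$, and both sets lie in the closed left half-plane.
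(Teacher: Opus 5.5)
Your architecture matches the paper's. You reduce $\det(\tilde s\mathbf{I}-\mathbf{Q})$ to $\det(\tilde s\mathbf{I}-\mathbf{B})^{N-2}$ times the characteristic polynomial of the $2(k+1)\times 2(k+1)$ block, then certify the Routh--Hurwitz conditions by exact sign analysis. The paper uses the first Routh column and Bernstein expansions; Hurwitz determinants with interval arithmetic serve equally well. Working with $\tilde s\mathbf{I}-\mathbf{Q}$ rather than $\mathbf{Q}$, the energy argument for $\operatorname{Re}\kappa^B<0$, and the rank-one structure of $\mathbf{C}-\mathbf{B}$ and $\mathbf{D}$ are all correct.

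The step that fails is the parameter range. You propose to certify stability for all $h\in(0,\infty)$ via $\tau=h/(1+h)\in[0,1]$. For $k=2$, however, the block has eigenvalues in the right half-plane when $\eta$ is near $1$ and $h$ is large, so no certification can succeed on your square.

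\emph{Why $h$ matters.} After scaling, the weights $[1,1,h^2,h^2]$ give the jump constraints $\int_{I_i}\partial_x p\,dx=h(\bar u^{(1)})_i$ relative weight $h^2$ against the averages. So $h$ is a genuine parameter that cannot be scaled away. The limit $h\to\infty$ imposes the boundary value and both jumps exactly and fits the averages by least squares. For $k=2$ and $\eta\to1$ this gives $L_1=300/577$, $L_2=-472/577$, $L_3=276/577$, $L_4=-91/577$, which are also the limits of the Appendix formulas.

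\emph{The characteristic polynomial.} Set $q(s)=s^3+9s^2+36s+60=\det(s\mathbf{I}-\mathbf{B})$. Use the one-cell upwind response to a unit inflow value: outflow value $(3s^2-24s+60)/q$, cell average $(s^2+6s+60)/q$, jump $-(6s^2+60s)/q$. Then the characteristic polynomial of the block is
\[
q^2-q\bigl[L_1(s^2+6s+60)-L_2(6s^2+60s)\bigr]-(3s^2-24s+60)\bigl[L_3(s^2+6s+60)-L_4(6s^2+60s)\bigr].
\]
It agrees with the Appendix values of $R_{2,1},R_{3,1},R_{4,1}$ at $\eta=1$.

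\emph{The counterexample.} In the limit above, the polynomial is a multiple of
\[
577s^6+7254s^5+27507s^4+39684s^3+8496s^2+7920s+3600 .
\]
All its coefficients are positive, yet its Routh column is $577,\ 7254,\ \approx 24350,\ \approx 37341,\ \approx 3401,\ \approx -32682,\ 3600$. The two sign changes mean two eigenvalues in the right half-plane, and by continuity this persists for $\eta$ near $1$ and large finite $h$. At $\eta\to1$ the sixth Routh entry is positive for $h=1$ but already negative for $h=\sqrt3$ and $h=2$.

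\emph{The fix.} The paper's ``without loss of generality, $h\in(0,1)$'' is therefore an essential restriction, acceptable only because $h\to0$ under refinement. Restrict your verification to a bounded range such as $(\eta,h)\in[0,1]\times[0,1]$. With that change your plan coincides with the paper's proof.
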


\begin{proof}
We first consider the case $k=1$, which does not involve weighted least squares. The boundary treatment matrices $\mathbf{C}$ and $\mathbf{D}$ therefore depend only on $\eta\in[0,1)$ and are given by
\begin{equation}\label{AB_WILW_1}
\renewcommand{\arraystretch}{1.25}
\mathbf{A}=\begin{pmatrix}
-\frac{5}{4} & \frac{15}{4} \\
\frac{1}{4} & -\frac{3}{4}
\end{pmatrix},
\quad
\mathbf{B}=\begin{pmatrix}
-\frac{7}{4} & -\frac{3}{4} \\
\frac{11}{4} & -\frac{9}{4}
\end{pmatrix},
\end{equation}
and
\begin{equation}\label{CD_WILW_1}
\renewcommand{\arraystretch}{1.25}
\mathbf{C}=\mathbf{B}+\frac{\eta(9\eta+14)}
{2(3\eta^2+6\eta+2)}
\begin{pmatrix}
\frac{5}{4} & \frac{5}{4} \\
-\frac{1}{4} & -\frac{1}{4}
\end{pmatrix},
\quad
\mathbf{D}=\frac{-3\eta^2-2\eta}{2(3\eta^2+6\eta+2)}
\begin{pmatrix}
\frac{5}{4} & \frac{5}{4} \\
-\frac{1}{4} & -\frac{1}{4}
\end{pmatrix}.
\end{equation}
The characteristic polynomial of the matrix \eqref{CDAB} is given by
$$\chi(s)=s^4+\frac{39\eta^2+82\eta+32}{d(\eta)}s^3
+\frac{72\eta^2+192\eta+112}{d(\eta)}s^2+\frac{144\eta+192}{d(\eta)}s+\frac{144}{d(\eta)},$$
where $d(\eta)=6\eta^2+12\eta+4$. To analyze the eigenvalue distribution, we employ the Routh--Hurwitz criterion (Appendix~\ref{Routh-Hurwitz Criterion}). The entries in the first column of the corresponding Routh table are
\begin{align*}
R_{1,1}
&=1,\\
R_{2,1}
&=
\frac{39\eta^2+82\eta+32}
     {6\eta^2+12\eta+4},\\
R_{3,1}
&=
\frac{
1404\eta^4+6264\eta^3+9768\eta^2+6224\eta+1408
}{
117\eta^4+480\eta^3+666\eta^2+356\eta+64
},\\
R_{4,1}
&=
\frac{
25272\eta^5+132759\eta^4+268596\eta^3+263484\eta^2
+127488\eta+24576
}{
1053\eta^6+6804\eta^5+17424\eta^4+22452\eta^3
+15276\eta^2+5224\eta+704
},\\
R_{5,1}
&=
\frac{72}
     {3\eta^2+6\eta+2}.
\end{align*}
Since $\eta\in[0,1)$, all first-column entries are positive. Hence, no sign changes occur in the first column of the Routh table.
By the Routh--Hurwitz criterion, all roots of the characteristic
polynomial lie in the left half-plane.

For $k=2,3$, the boundary treatment matrices $\mathbf{C}$ and $\mathbf{D}$ depend on both $\eta\in[0,1)$ and $h$. Without loss of generality, we assume $h\in(0,1)$. The explicit expressions for the matrices $\mathbf{A}$, $\mathbf{B}$, $\mathbf{C}$, and $\mathbf{D}$ are provided in Appendix~\ref{MatrixQ}. Similarly, we construct the corresponding Routh table to analyze the eigenvalue distribution of the matrix \eqref{CDAB}. Symbolic computations show that the first column entries can be expressed as rational functions of
$$R_{i,1}=\frac{b_i(\eta,h)}{a_i(\eta,h)},\quad 2\le i\le 2k+3.$$
These expressions are extremely complex and involve a mixture of addition and subtraction terms, and we aim to prove that they remain positive. To avoid numerical errors, we adopt a robust algebraic approach using the Bernstein polynomial basis expansion to verify that $a_i(\eta, h)$ and $b_i(\eta, h)$ share the same sign and lack real roots in the domain. By transforming from the standard monomial basis to the Bernstein basis, the task of verifying continuous positivity over $(\eta,h)\in[0,1)\times(0,1)$ reduces to checking the signs of discrete Bernstein coefficients. Appendix~\ref{Bernstein Basis Expansion} provides further details on this approach. For $k=2,3$, rigorous symbolic computations establish that all Bernstein coefficients of the numerator and denominator polynomials are strictly positive, implying $R_{i,1}>0$ for $2\le i\le 2k+3$. It follows from the Routh--Hurwitz criterion that all eigenvalues lie in the left half-plane.

\end{proof}

Furthermore, we analyze the stability of the full discrete scheme with the third-order TVD RK time
discretization \eqref{RK}. The fully discrete system can be expressed in the following matrix-vector form:
\begin{equation}\label{Fully-discrete-scheme}
\mathbf{U}^{n+1}=g(\mathbf{Q})\mathbf{U}^n,
\end{equation}
where $g(\mathrm{X})$ is a matrix function
$$g(\mathbf{X}) = \mathbf{I} + \lambda \mathbf{X} + \frac{1}{2!} (\lambda \mathbf{X})^2 + \frac{1}{3!} (\lambda \mathbf{X})^3,$$
and $\lambda=\frac{\Delta t}{h}=\frac{1}{2k+1}$ is the CFL number. The full-discrete scheme is stable if the entire eigenvalue spectrum of $g(\mathbf{Q})$ is contained within the unit disk, i.e., $\rho(g(\mathbf{Q}))\le1$. As in the semi-discrete case, we only need to focus on the eigenvalues of
\begin{equation}\label{gCDAB}
g\left(\begin{matrix}
\mathbf{C} & \mathbf{D} \\
\mathbf{A} & \mathbf{B}
\end{matrix}\right) .
\end{equation}

To verify that the spectrum of \eqref{gCDAB} lies within the unit disk, we apply a M\"{o}bius transformation to the characteristic polynomial $\chi(z)$, which defines a bijective mapping on the extended complex plane. This transformation maps roots within the unit disk onto the left half-plane, allowing the Routh--Hurwitz criterion to be employed for the analysis.

\begin{remark}
The M\"{o}bius transformation $s = \frac{z - 1}{z + 1}$ maps the interior of the unit disk to the left half-plane. For any $z \in \mathbb{C} \setminus \{-1\}$, a direct calculation yields:
$$\operatorname{Re}(s) = \frac{|z|^2 - 1}{|z + 1|^2}.$$
It follows that the sign of $\text{Re}(s)$ is determined solely by $|z|^2 - 1$. Specifically:
\begin{itemize}
    \item $|z| < 1 \iff \operatorname{Re}(s) < 0$ (unit disk to left half-plane);
    \item $|z| > 1 \iff \operatorname{Re}(s) > 0$ (disk exterior to right half-plane);
    \item $|z| = 1 \iff \operatorname{Re}(s) = 0$ (unit circle to imaginary axis).
\end{itemize}
\end{remark}

We transform the $n$-th degree characteristic polynomial $\chi(z)$ into $\hat{\chi}(s)$ via the M\"{o}bius transformation:
\begin{equation}\label{chi_s}
\hat{\chi}(s) = (1-s)^n \chi\left(\frac{1+s}{1-s}\right).
\end{equation}
The factor $(1-s)^n$ clears the denominators, ensuring $\hat{\chi}(s)$ is a polynomial of degree $n$. Consequently, the Schur stability of $\chi(z)$ (roots within the unit disk) is equivalent to the Hurwitz stability of $\hat{\chi}(s)$ (roots in the left half-plane), which is verified using the Routh--Hurwitz criterion.

The following theorem establishes the fully discrete linear stability of the DG method with WILW boundary treatment.
\begin{theorem}
For the linear scalar conservation law \eqref{1D Scalar Equation}, all eigenvalues of the fully discrete matrix $g(\mathbf{Q})$ associated with the DG method with WILW boundary treatment and the third-order TVD Runge--Kutta method \eqref{RK} lie within the unit disk under the CFL condition $\Delta t=\frac{h}{2k+1}$ for $k=1,2,3$.
\end{theorem}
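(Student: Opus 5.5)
The plan mirrors the semi-discrete proof, with the Möbius transformation \eqref{chi_s} turning the unit-disk question into a Hurwitz question.

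\textbf{Step 1: reduce to the $2\times 2$ block matrix.} The matrix $g(\mathbf{Q})$ is a polynomial in $\mathbf{Q}$, so by the spectral mapping theorem its eigenvalues are exactly $g(\tilde s)$ with $\tilde s$ an eigenvalue of $\mathbf{Q}$. From the block determinant factorization in the previous proof, the spectrum of $\mathbf{Q}$ is the spectrum of the coupled matrix \eqref{CDAB} together with the eigenvalues $\kappa^B$ of $\mathbf{B}$. For the latter, $|g(\kappa^B)|\le 1$ at $\lambda=1/(2k+1)$ is the classical RKDG stability fact for the interior scheme from \cite{rkdg2}; I will also confirm it directly on the $(k+1)$-dimensional characteristic polynomial of $g(\mathbf{B})$, which contains no $\eta$ or $h$. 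It therefore remains to show that every eigenvalue of $g$ applied to \eqref{CDAB} lies in the closed unit disk.

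\textbf{Step 2: build the transformed polynomial.} For each $k$, I form $\chi(z)=\det\bigl(z\mathbf{I}-g(\mathbf{M})\bigr)$, where $\mathbf{M}$ is the matrix \eqref{CDAB} and $\lambda=1/(2k+1)$. This uses the explicit $\mathbf{A},\mathbf{B},\mathbf{C},\mathbf{D}$ from \eqref{AB_WILW_1}--\eqref{CD_WILW_1} for $k=1$ and those of Appendix~\ref{MatrixQ} for $k=2,3$. I then compute $\hat\chi(s)$ via \eqref{chi_s}, which has degree $n=2k+2$. By the preceding Remark, it suffices to show that $\hat\chi$ is Hurwitz, allowing roots on the imaginary axis only in the marginal case.

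\textbf{Step 3: Routh table, $k=1$.} For $k=1$ the coefficients are rational in $\eta$ alone. I clear denominators, build the Routh table, and check that each first-column entry is a ratio of polynomials in $\eta$ with positive coefficients, or at least positive on $[0,1)$. Any entry that is not manifestly positive I will expand in the Bernstein basis on $[0,1]$ and check its coefficients.

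\textbf{Step 4: Routh table, $k=2,3$.} For $k=2,3$ the first-column entries $R_{i,1}=b_i(\eta,h)/a_i(\eta,h)$ are rational in both $\eta$ and $h$. I multiply numerators and denominators by known positive factors, write each in the bivariate Bernstein basis on $[0,1]^2$ as in Appendix~\ref{Bernstein Basis Expansion}, and verify that all coefficients are strictly positive, subdividing the square where necessary.

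\textbf{Main obstacle: the leading coefficient and the boundary of the parameter range.} The leading coefficient of $\hat\chi$ equals $(-1)^n\chi(-1)$. Its positivity is needed, but it may degenerate at $h\to 0$ or $\eta\to 1$, where the $\mathcal O(h^2)$ weights switch off and the spectrum can approach the unit circle. This is also where some Bernstein coefficients are likely to be zero on an edge rather than strictly positive. I would first try degree elevation and subdivision near $h=0$. Failing that, I would analyse the limit $h=0$ separately: set $h=0$ symbolically and check that the limiting polynomial is still Hurwitz, or has only simple imaginary-axis roots, which give $|z|=1$ and are allowed by $\rho\le 1$. A continuity argument then covers small $h$.
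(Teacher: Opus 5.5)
Your proposal is correct and follows essentially the same route as the paper: reduce to $g$ applied to the coupled block \eqref{CDAB}, Möbius-transform its characteristic polynomial via \eqref{chi_s}, and check that the Routh first column is positive. That check is done directly in $\eta$ for $k=1$ and through bivariate Bernstein expansions over $(\eta,h)$ for $k=2,3$. Your explicit verification that $|g(\kappa^B)|\le 1$ makes precise a step the paper only asserts by analogy with the semi-discrete case. Your $h\to 0$ contingency is not needed, since the paper reports strictly positive Bernstein coefficients; if it were needed, a continuity argument from $h=0$ would work only when the limiting polynomial is strictly Hurwitz, not when it has roots on the imaginary axis.
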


\begin{proof}
For $k=1$, we first construct \eqref{gCDAB} using \eqref{AB_WILW_1} and \eqref{CD_WILW_1}, and apply the M\"{o}bius transformation to its characteristic polynomial $\chi(z)$ to obtain $\hat{\chi}(s)$ in \eqref{chi_s}. The corresponding Routh table has first-column entries (listed in Appendix~\ref{Routh-Table}) that are positive for all $\eta\in[0,1)$. It therefore follows from the Routh--Hurwitz criterion that all roots of $\hat{\chi}(s)$ lie in the left half-plane, and hence all eigenvalues of \eqref{gCDAB} lie within the unit disk.

For $k=2,3$, we repeat the above procedure using the explicit expressions of $\mathbf{A}$, $\mathbf{B}$, $\mathbf{C}$, and $\mathbf{D}$ provided in Appendix~\ref{MatrixQ} to construct the corresponding Routh tables, and employ the Bernstein polynomial basis expansion to establish the positivity of the first-column entries over $(\eta,h)\in[0,1)\times(0,1)$. Rigorous symbolic computations confirm this property. 

\end{proof}

\subsection{One-dimensional systems}
Next, we consider the one-dimensional compressible Euler equation:
\begin{equation}\label{Euler_equations}
    \mathbf{U}_t + \mathbf{F}(\mathbf{U})_x = 0, \quad x \in (a, b), \, t > 0,
\end{equation}
with appropriate boundary conditions and initial conditions. The conservative variables $\mathbf{U}$ and the flux $\mathbf{F}(\mathbf{U})$ are defined as
$$    \mathbf{U} = \begin{pmatrix} U_1 \\[2pt]
    U_2 \\[2pt]
    U_3 \end{pmatrix} = \begin{pmatrix} \rho \\[2pt] \rho u \\[2pt] E \end{pmatrix},\quad  \mathbf{F}(\mathbf{U}) = \begin{pmatrix}
U_2 \\[2pt]
(\gamma - 1) U_3 + \frac{3 - \gamma}{2} \frac{U_2^2}{U_1} \\[2pt]
\left(\gamma U_3 - \frac{\gamma - 1}{2} \frac{U_2^2}{U_1}\right) \frac{U_2}{U_1}
\end{pmatrix}
= \begin{pmatrix}
\rho u \\[2pt]
\rho u^2 + p \\[2pt]
u(E + p)
\end{pmatrix},$$
And the variables $\rho$, $u$, $p$, and $E$ represent the density, velocity, pressure, and total energy, respectively. The equation of state is defined to close the system of equations,
$$ E=\frac{p}{\gamma-1}+\frac12\rho u^2$$
where the ratio of the specific heat $\gamma = 1.4$ for air. Let $\mathbf{A}(\mathbf{U}) = \partial \mathbf{F}(\mathbf{U}) / \partial \mathbf{U}$ denote the Jacobian matrix, which is diagonalizable as
$$\mathbf{A}(\mathbf{U}) = \mathbf{L}^{-1} \boldsymbol{\Lambda} \mathbf{L},\quad\boldsymbol{\Lambda}(\mathbf{U}) = \text{diag}\{u - c, u, u + c\},\quad c=\sqrt{\gamma p/\rho},$$
and
$$\mathbf{L}(\mathbf{U})=\begin{pmatrix}
\mathbf{l}_1(\mathbf{U}) \\[2pt]
\mathbf{l}_2(\mathbf{U}) \\[2pt]
\mathbf{l}_3(\mathbf{U})
\end{pmatrix}= \begin{pmatrix}
l_{1,1}(\mathbf{U}) & l_{1,2}(\mathbf{U}) & l_{1,3}(\mathbf{U}) \\[2pt]
l_{2,1}(\mathbf{U}) & l_{2,2}(\mathbf{U}) & l_{2,3}(\mathbf{U}) \\[2pt]
l_{3,1}(\mathbf{U}) & l_{3,2}(\mathbf{U}) & l_{3,3}(\mathbf{U})
\end{pmatrix}.$$

The number of required boundary conditions is determined by the signs of eigenvalues. We employ the superscript $*$ to present values located at $x = a$ to ease the notation. To illustrate the algorithmic procedure, we consider the subsonic inflow at the boundary $x = a$ as an example, characterized by $u^* - c^* < 0 < u^* < u^* + c^*$. Consequently, two boundary conditions need to be imposed at $x = a$:
$$U_1^* = g_1(t),\quad U_2^* = g_2(t), \quad t > 0.$$
Given the domain partition \eqref{1Dmesh}, we need to construct an approximation polynomial $p_i(x)\in P^{k+1}(\tilde{I}_0)$ and further obtain
\begin{equation}\label{1D Taylor expansion}
U_i(x_{1/2}^-,t)=p_i(x_{1/2},t)+\mathcal{O}(h^{k+2}),\quad i=1,2,3.
\end{equation}
To achieve this, we use the proposed WILW method on each component of $\mathbf{U}$. Therefore, it is necessary to determine the spatial derivatives $U_i^{(m)}$ at $x=a$ for $m = 0, 1, \cdots, k_d$. 
Based on the analysis for scalar problems, we take $k_d=0$ for $k=1,2$, and $k_d=1$ for $k=3$.

Define the characteristic variables via characteristic decomposition $\mathbf{V} = \mathbf{L}\mathbf{U} = (V_1, V_2, V_3)^{\top}$. 
At this boundary, $V_1$ represents the outflow component, while $V_2$ and $V_3$ are inflow components. Applying the treatment from Section \ref{1D Outflow} to $V_1$ yields the derivatives at $x=a$, $V_1^{*(m)}$ for $m=0,\dots,k$.  
For the other variables, we draw on the idea of inverse Lax-Wendroff and use boundary conditions and the PDE to construct constraints.

First, since the boundary state at $x=a$ is unknown, we approximate it with $\mathbf{U}_{1/2} = \mathbf{U}(x_{1/2}, t)$ for the local characteristic decomposition. Combining the prescribed boundary conditions with the extrapolated $V_1^*$ yields the linear system for the point value $\mathbf{U}^*$:
\begin{equation}
\begin{cases}U_1^{*} = g_1(t),\\[2pt] U_2^{*} = g_2(t),\\[2pt]
\mathbf{l}_1(\mathbf{U}_{1/2})\cdot \mathbf{U}^*= V_1^*.\\\end{cases}
\end{equation}

Next, we apply the inverse Lax-Wendroff procedure combining the PDE with the boundary conditions for $U_1$, $U_2$,
and the extrapolated $V_1^{*(1)}$, yielding the following linear system for the spatial derivatives $\mathbf{U}^{*(1)}$:
\begin{equation}\label{1DEuler_x}
    \mathcal{A}_1\mathbf{U}^{*(1)}=\mathbf{b}_1,
\end{equation}
where the coefficient matrix $\mathcal{A}_1$ and $\mathbf{b}_1$ are given by
$$\mathcal{A}_1 = 
\begin{pmatrix}
0 & 1 & 0 \\[2pt]
\frac{\gamma - 3}{2} \left( \frac{U_2^{*}}{U_1^{*}} \right)^2 & 
\frac{(3 - \gamma) U_2^{*}}{U_1^{*}} & 
\gamma - 1 \\[2pt]
l_{1,1}(\mathbf{U}_{1/2}) & l_{1,2}(\mathbf{U}_{1/2}) & l_{1,3}(\mathbf{U}_{1/2})
\end{pmatrix},\quad \mathbf{b}_1 = 
\begin{pmatrix}
- g_1'(t) \\[2pt]
- g_2'(t) \\[2pt]
V_1^{*(1)}
\end{pmatrix}.$$
Thus, we can obtain $\mathbf{U}^{*(1)}$ via solving the above linear system. 

Note that, by repeatedly applying the Euler equations, we construct a linear system for the high-order spatial derivatives $\mathbf{U}^{*(m)}$. The coefficient matrix depends solely on $\mathbf{U}^*$, while the right-hand side involves the derivatives of $g_1, g_2$, extrapolated $V_1$, and lower-order spatial derivatives. Hence, we can construct $p_i(x)$ via the SILW method. However, the SILW require more boundary terms, i.e., large $k_d$, and those high-order spatial derivatives for nonlinear systems are exceedingly complex, imposing a heavy computational burden. In contrast, the proposed WILW method minimizes the value of $k_d$, simplifying the calculation and enhancing efficiency while preserving numerical stability.

In certain cases, linear systems at outflow-inflow transitions may become ill-conditioned as characteristic speed approaches zero. To maintain robustness, we adopt the least-squares framework from \cite{tan2010}, which introduces an additional extrapolation equation for the corresponding characteristic direction to resolve directional ambiguity.

The complete algorithm for 1D Euler equations is summarized as follows.
\vspace{1em}
\begingroup
\captionsetup{hypcap=false}
\phantomsection
\captionof{algorithm}{WILW method for 1D Euler equations}
\label{1D algorithm}
\begin{enumerate}[label=\textbf{Step \arabic*.}, leftmargin=1.5cm]
     \item Compute the eigenvalues $\lambda_i(\mathbf{U}_{1/2})$ and left eigenvector matrix $\mathbf{L}(\mathbf{U}_{1/2})$ of the Jacobian matrix $\mathbf{A}(\mathbf{U}_{1/2})$. Identify the prescribed inflow boundary conditions $g_1(t)$ and $g_2(t)$ according to the signs of these eigenvalues.
    \item Following Section \ref{1D Outflow}, we construct polynomials for the outflow characteristic variables and extrapolate them to the boundary, yielding $V_1^{*(m)}$ for $m=0, \dots, k$.
    \item Determine the boundary state $\hat{\mathbf{U}}^*$ by solving the linear system coupling the prescribed boundary conditions and the extrapolated $V_1^*$.
    \item Apply the inverse Lax-Wendroff procedure to construct the linear system and solve for the spatial derivatives $\mathbf{U}^{*(m)}$.
    \item Construct the $(k+1)$-th degree polynomials $p_i(x, t)$ ($i=1, 2, 3$) via WILW, and evaluate them at $x_{1/2}$ according to Equation \eqref{1D Taylor expansion} to yield the interface values.
\end{enumerate}
\endgroup

\section{WILW method for two-dimensional conservation laws}
The proposed approach can be easily generalized to two-dimensional problems. We consider the two-dimensional compressible Euler equations with appropriate boundary and initial conditions:
\begin{equation}\mathbf{U}_t + \mathbf{F}(\mathbf{U})_x + \mathbf{G}(\mathbf{U})_y = 0,\quad (x,y)\in\Omega,\; t>0,\end{equation}
where the state vector and flux vectors are given by
\begin{equation}\label{2D_Euler_equation}
    \mathbf{U} =\begin{pmatrix}
U_1 \\[2pt]
U_2 \\[2pt]
U_3 \\[2pt]
U_4
\end{pmatrix}=
\begin{pmatrix}
\rho \\[2pt]
\rho u \\[2pt]
\rho v \\[2pt]
E
\end{pmatrix},\qquad
\mathbf{F}(\mathbf{U}) =
\begin{pmatrix}
\rho u \\[2pt]
\rho u^{2} + p \\[2pt]
\rho u v \\[2pt]
u(E + p)
\end{pmatrix},\qquad
\mathbf{G}(\mathbf{U}) =
\begin{pmatrix}
\rho v \\[2pt]
\rho u v \\[2pt]
\rho v^{2} + p \\[2pt]
v(E + p)
\end{pmatrix}.
\end{equation}
The variables $\rho,~u,~v,~p$ and $E$ represent the density, $x$-velocity, $y$-velocity, pressure and total energy, respectively. The equation of state is defined by
$$E=\frac{p}{\gamma-1}+\frac12\rho(u^2+v^2),$$
where $\gamma = 1.4$ for air at ordinary temperatures.

The physical domain $\Omega$ can feature an arbitrarily complex geometry. We assume that $\Omega$ is overlaid by a uniform unfitted Cartesian mesh with grid sizes 
\begin{equation}
    \Delta x=x_{i+\frac{1}{2}}-x_{i-\frac{1}{2}},\quad \Delta y=y_{j+\frac{1}{2}}-y_{j-\frac{1}{2}},
\end{equation}
 where $\Delta x$ and $\Delta y$ are assumed to be the same, i.e. $\Delta x = \Delta y =h$. Based on this background grid, the effective computational domain $\tilde{\Omega}$ is defined as the subset of grid cells entirely contained within the physical domain $\Omega$, that is $\tilde{\Omega}=\{K_{i,j}: K_{i,j}=(x_{i-\frac 1 2},x_{i+\frac 12}) \times  (y_{j-\frac 12},y_{j+\frac 12}) \subset \Omega\}$. As illustrated in Fig. \ref{2D Domain}, for a circular physical domain $\Omega$ overlaid by a uniform Cartesian mesh, the effective computational domain $\tilde{\Omega}$ (represented by the blue region) consists of all grid cells whose boundaries lie completely within $\Omega$.

Since the two-dimensional DG method requires exterior values at Gauss points $P_G$ on the boundary of the effective computational domain $\partial\tilde{\Omega}$ to construct numerical fluxes, we utilize the nearest point $P_B$ on the physical boundary $\partial\Omega$ to determine the required boundary information.

Similar to \cite{tan2010, tan2012, liu2024}, we first perform a local coordinate transformation to reduce the 2D problem to a 1D case, where a 1D WILW reconstruction is applied along the normal direction. For a given point $P_G$, we identify a corresponding foot point $P_B \in \partial\Omega$ such that the outward normal vector $\mathbf{n} = (\cos \theta, \sin \theta)$ at $P_B$ passes through $P_G$, as illustrated in Fig. \ref{Local coordinate system}. A local coordinate system is then established at $P_B$ as the origin, where the $\hat{x}$-axis is aligned with $\mathbf{n}$ and the $\hat{y}$-axis is oriented tangentially to the boundary $\partial\Omega$. This coordinate rotation is expressed as
\begin{equation}\label{coordinate rotation}
\begin{pmatrix} \hat{x} \\[2pt] \hat{y} \end{pmatrix} = \begin{pmatrix} \cos\theta & \sin\theta \\[2pt] -\sin\theta & \cos\theta \end{pmatrix} \begin{pmatrix} x \\[2pt] y \end{pmatrix}.
\end{equation}
In this local frame, $\delta = |P_G - P_B|$ represents the offset distance between the computational and physical boundaries. Based on our selection strategy for the effective computational domain, the offset distance satisfies $\delta \in [0, \hat{h})$, where $\hat{h} = \sqrt{\Delta x^2 + \Delta y^2}$. Since the WILW reconstruction requires information from interior cells, we define two cells $(P_G, Q)$ and $(Q, R)$ with a length of $\hat{h}$ along the $-\hat{x}$ direction starting from $P_G$. Consequently, the ratio $\delta/\hat{h} \in [0, 1)$ matches the configuration of the one-dimensional case.

\begin{figure}[htbp]
    \centering 
    \begin{subfigure}{0.40\textwidth}
        \centering
        \includegraphics[width=\textwidth]{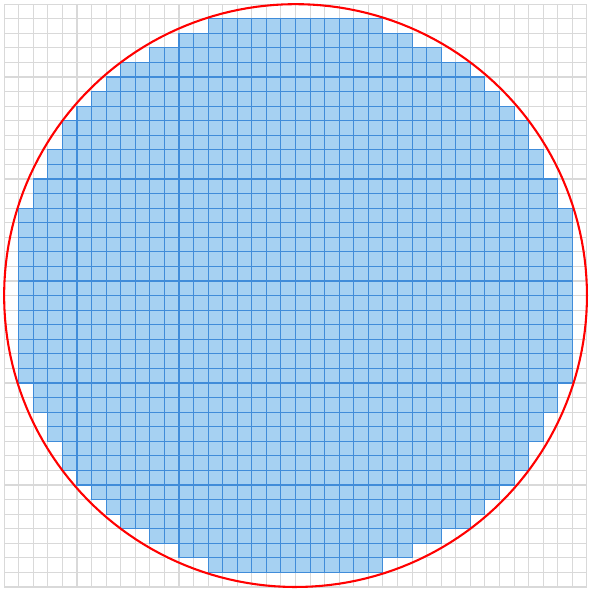}
        \caption{Circular domain}
        \label{2D Domain}
    \end{subfigure}
    \hspace{1cm}
    \begin{subfigure}{0.40\textwidth}
        \centering
        \includegraphics[width=\textwidth]{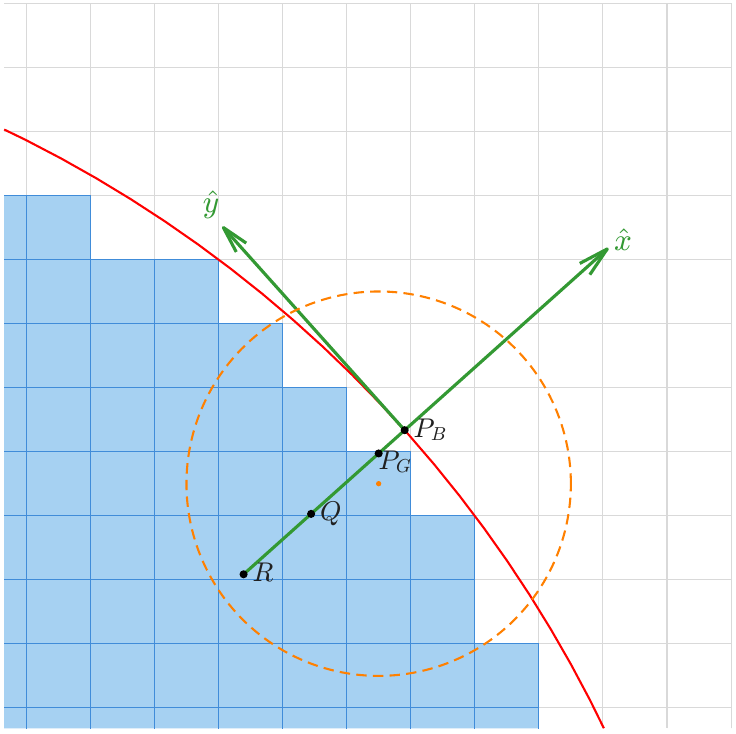}
        \caption{Local coordinate system}
        \label{Local coordinate system}
    \end{subfigure}   
    \caption{Schematic of the 2D algorithm.}
    \label{Schematic of the 2D algorithm}
\end{figure}

In this local coordinate system, the Euler equations are expressed as
\begin{equation}
\mathbf{\hat{U}}_t + \mathbf{F}(\mathbf{\hat{U}})_{\hat{x}} + \mathbf{G}(\mathbf{\hat{U}})_{\hat{y}} = \mathbf{0}.
\end{equation}
The state vector $\hat{\mathbf{U}}$ and the transformed velocity components $(\hat{u}, \hat{v})$ are defined as
$$\hat{\mathbf{U}} = \begin{pmatrix} \hat{U}_1 \\ \hat{U}_2 \\ \hat{U}_3 \\ \hat{U}_4 \end{pmatrix}= \begin{pmatrix} \rho \\ \rho \hat{u} \\ \rho \hat{v} \\ E \end{pmatrix}, \qquad \begin{pmatrix} \hat{u} \\ \hat{v} \end{pmatrix} = \begin{pmatrix} \cos\theta & \sin\theta \\ -\sin\theta & \cos\theta \end{pmatrix} \begin{pmatrix} u \\ v \end{pmatrix}.
$$
We denote the Jacobian matrices of the normal flux $\mathbf{F}(\hat{\mathbf{U}})$ and the tangential flux $\mathbf{G}(\hat{\mathbf{U}})$ in the local coordinate system as $\mathbf{A}(\hat{\mathbf{U}})$ and $\mathbf{B}(\hat{\mathbf{U}})$, respectively. These matrices are given by
$$\mathbf{A}(\hat{\mathbf{U}}) = \frac{\partial\mathbf{F}(\hat{\mathbf{U}})}{\partial\hat{\mathbf{U}}} =  \begin{pmatrix} \mathbf{a}_1(\hat{\mathbf{U}}) \\ \mathbf{a}_2(\hat{\mathbf{U}}) \\ \mathbf{a}_3(\hat{\mathbf{U}}) \\ \mathbf{a}_4(\hat{\mathbf{U}}) \end{pmatrix}, \quad  \mathbf{B}(\hat{\mathbf{U}}) = \frac{\partial\mathbf{G}(\hat{\mathbf{U}})}{\partial\hat{\mathbf{U}}} =  \begin{pmatrix} \mathbf{b}_1(\hat{\mathbf{U}}) \\ \mathbf{b}_2(\hat{\mathbf{U}}) \\ \mathbf{b}_3(\hat{\mathbf{U}}) \\ \mathbf{b}_4(\hat{\mathbf{U}}) \end{pmatrix},$$
where $\mathbf{A}(\hat{\mathbf{U}})$ can be diagonalized as
$$\mathbf{A}(\hat{\mathbf{U}}) = \mathbf{L}^{-1} \boldsymbol{\Lambda} \mathbf{L},\quad \boldsymbol{\Lambda}(\hat{\mathbf{U}}) = \text{diag}\{\hat{u} - c, \hat{u}, \hat{u}, \hat{u} + c\},$$
and
$$\mathbf{L}(\hat{\mathbf{U}})=\begin{pmatrix}
\mathbf{l}_1(\hat{\mathbf{U}}) \\[2pt]
\mathbf{l}_2(\hat{\mathbf{U}}) \\[2pt]
\mathbf{l}_3(\hat{\mathbf{U}}) \\[2pt]
\mathbf{l}_4(\hat{\mathbf{U}})
\end{pmatrix}= \begin{pmatrix}
l_{1,1}(\hat{\mathbf{U}}) & l_{1,2}(\hat{\mathbf{U}}) & l_{1,3}(\hat{\mathbf{U}}) & l_{1,4}(\hat{\mathbf{U}}) \\[2pt]
l_{2,1}(\hat{\mathbf{U}}) & l_{2,2}(\hat{\mathbf{U}}) & l_{2,3}(\hat{\mathbf{U}}) & l_{2,4}(\hat{\mathbf{U}}) \\[2pt]
l_{3,1}(\hat{\mathbf{U}}) & l_{3,2}(\hat{\mathbf{U}}) & l_{3,3}(\hat{\mathbf{U}}) & l_{3,4}(\hat{\mathbf{U}}) \\[2pt]
l_{4,1}(\hat{\mathbf{U}}) & l_{4,2}(\hat{\mathbf{U}}) & l_{4,3}(\hat{\mathbf{U}}) & l_{4,4}(\hat{\mathbf{U}})
\end{pmatrix}.$$
Using the left-eigenvector matrix $\mathbf{L}$, the characteristic variables are defined as $\hat{\mathbf{V}} = \mathbf{L} \hat{\mathbf{U}} = (\hat{V}_1, \hat{V}_2, \hat{V}_3, \hat{V}_4)^{\top}$. The number of prescribed boundary conditions at $P_B$ depends on the signs of the eigenvalues $\{\hat{u}-c, \hat{u}, \hat{u}, \hat{u}+c\}$, leading to the following four flow states:
\begin{description}
    \item[Case 1.] Supersonic outflow ($\hat{u} - c \ge 0$): No boundary condition is required.
    \item[Case 2.] Subsonic outflow ($\hat{u} - c < 0 \le \hat{u}$): One boundary condition is required.
    \item[Case 3.] Subsonic inflow ($\hat{u} < 0 \le \hat{u} + c$): Three boundary conditions are required.
    \item[Case 4.] Supersonic inflow ($\hat{u} + c < 0$): Four boundary conditions are required.
\end{description}

We still employ the superscript $*$ to present values located at $P_B$ to ease the notation. To illustrate the algorithmic procedure, we consider \textbf{Case 3} as an example, where three Dirichlet boundary conditions are prescribed at $P_B$ as follows:
$$\hat{U}_1^* = g_1(t), \quad \hat{U}_2^* = g_2(t), \quad \hat{U}_3^* = g_3(t), \quad t > 0.$$

\subsection{ WILW procedure}
At the boundary point $P_B$, $\hat{V}_4$ is the outflow component, while $\hat{V}_1$, $\hat{V}_2$ and $\hat{V}_3$ are inflow components. Following the algorithm in Section \ref{2D Outflow}, we extrapolate $\hat{V}_4$ to obtain the $\hat{x}$-direction derivatives $\hat{V}_4^{*(m)}$ for $m=0, 1, \dots, k$. As discussed above, we reconstruct the WILW polynomials along the $\hat{x}$ direction. Consequently, at time level $t_n$, the exterior value at $P_G$ is approximated via a $(k+1)$-th order Taylor expansion:
\begin{equation}\label{2D Taylor expansion}
    \hat{U}_i^{\text{ext}}(P_G)=p_i(P_G)+\mathcal{O}(\hat{h}^{k+2}),\quad i=1,2,3,4.
\end{equation}

To facilitate the local characteristic decomposition, the unknown state at $P_B$ is approximated by the interior state $\hat{\mathbf{U}}_G = \hat{\mathbf{U}}(P_G)$. This implies that the characteristic decomposition is performed at $P_G$, which constitutes a necessary approximation in our procedure. Combining the boundary conditions with the extrapolated $\hat{V}_4^*$ yields the linear system for $\hat{\mathbf{U}}^*$.
\begin{equation}
\begin{cases} \hat{U}_1^* = g_1(t), \\[2pt] \hat{U}_2^* = g_2(t), \\[2pt] \hat{U}_3^* = g_3(t), \\[2pt] \mathbf{l}_4(\hat{\mathbf{U}}_G)\cdot\hat{\mathbf{U}}^* = \hat{V}_4^*. \end{cases}
\end{equation}

Next, the inverse Lax-Wendroff procedure to $\hat{U}_1, \hat{U}_2, \hat{U}_3$ and the extrapolated $\hat{V}_4^{*(1)}$ yields the following linear system for the $\hat{x}$-derivatives $\mathbf{\hat{U}}^{*(1)}$:
\begin{equation}
    \mathcal{A}_1\hat{\mathbf{U}}^{*(1)}=\mathbf{b}_1,
\end{equation}
where the coefficient matrix $\mathcal{A}_1$ is
$$\mathcal{A} = \begin{pmatrix}  \mathbf{a}_1(\hat{\mathbf{U}}^*) \\[2pt]  \mathbf{a}_2(\hat{\mathbf{U}}^*) \\[2pt]  \mathbf{a}_3(\hat{\mathbf{U}}^*) \\[2pt]  \mathbf{l}_4(\hat{\mathbf{U}}_G)  \end{pmatrix} = \begin{pmatrix} 0 & 1 & 0 & 0 \\[6pt] \frac{\gamma - 3}{2} \left( \frac{\hat{U}_2^*}{\hat{U}_1^*} \right)^2 + \frac{\gamma - 1}{2} \left( \frac{\hat{U}_3^*}{\hat{U}_1^*} \right)^2 & (3 - \gamma) \frac{\hat{U}_2^*}{\hat{U}_1^*} & (1 - \gamma) \frac{\hat{U}_3^*}{\hat{U}_1^*} & \gamma - 1 \\[6pt] -\frac{\hat{U}_2^* \hat{U}_3^*}{({\hat{U}_1^*})^2} & \frac{\hat{U}_3^*}{\hat{U}_1^*} & \frac{\hat{U}_2^*}{\hat{U}_1^*} & 0 \\[6pt] l_{4,1}(\hat{\mathbf{U}}_G) & l_{4,2}(\hat{\mathbf{U}}_G) & l_{4,3}(\hat{\mathbf{U}}_G) & l_{4,4}(\hat{\mathbf{U}}_G) \end{pmatrix},$$
and the vectors are
$$\hat{\mathbf{U}}^{*(1)} = \begin{pmatrix}
\hat{U}_1^{*(1)} \\[2pt]
\hat{U}_2^{*(1)} \\[2pt]
\hat{U}_3^{*(1)} \\[2pt]
\hat{U}_4^{*(1)}
\end{pmatrix},\quad\mathbf{b} = \begin{pmatrix}  -g_1'(t) - ( \mathbf{B}(\hat{\mathbf{U}}^{*}) \cdot\hat{\mathbf{U}}_{\hat{y}}^{*} )_1 \\[2pt]  
-g_2'(t) - ( \mathbf{B}(\hat{\mathbf{U}}^{*}) \cdot\hat{\mathbf{U}}_{\hat{y}}^{*} )_2 \\[2pt]  
-g_3'(t) - ( \mathbf{B}(\hat{\mathbf{U}}^{*}) \cdot\hat{\mathbf{U}}_{\hat{y}}^{*} )_3 \\[2pt]  
\hat{V}_4^{*(1)}  \end{pmatrix}.$$

The tangential derivatives in $\mathbf{b}$ are generally unknown. Following the procedure in Section \ref{2D Outflow}, we reconstruct the polynomials and extrapolate them to the boundary to compute the tangential derivatives at $P_B$. We assume that all eigenvalues are well-separated from zero, ensuring that the coefficient matrix $\mathcal{A}$ is well-conditioned. If an eigenvalue approaches zero, an additional characteristic extrapolation equation is incorporated to address the issue, following the same procedure as in the one-dimensional case.

Similarly, by repeatedly applying the Euler equations, we construct a linear system for the higher-order normal derivatives $\hat{\mathbf{U}}^{*(m)}$. The right-hand side is determined by the $m$-th order derivatives of $g_1, g_2, g_3$, the tangential derivatives, the extrapolated normal derivatives of $\hat{V}_4$, and the lower-order normal derivatives. The coefficient matrix depends solely on $\hat{\mathbf{U}}^*$.

Finally, the exterior state $\hat{\mathbf{U}}_G^{\text{ext}}$ at $P_G$ is determined via Taylor expansion and transformed to $\mathbf{U}_G^{\text{ext}}$ by rotating the momentum components.

The complete algorithm for 2D Euler equations is summarized as follows.
\vspace{1em}
\begingroup
\captionsetup{hypcap=false}
\phantomsection
\captionof{algorithm}{WILW method for 2D Euler equations}
\label{2D algorithm}
\begin{enumerate}[label=\textbf{Step \arabic*.}, leftmargin=1.5cm]
    \item For each Gauss point $P_G$ on the computational boundary, identify its normal projection $P_B$ on the physical boundary $\partial\Omega$. A local coordinate system is then established at $P_B$ \eqref{coordinate rotation}.
    \item Compute the eigenvalues $\lambda(\hat{\mathbf{U}}_G)$ and the left eigenvector matrix $\mathbf{L}(\hat{\mathbf{U}}_G)$ of the Jacobian matrix $\mathbf{A}(\hat{\mathbf{U}}_G)$. Based on the signs of these eigenvalues, prescribe the inflow boundary conditions $g_1(t), g_2(t)$, and $g_3(t)$.
    \item Following Section \ref{2D Outflow}, we reconstruct and extrapolate polynomials to determine $V_4^{*(m)}$ ($m=0,1, \dots, k$). This procedure also yields the tangential derivatives of the conserved variables at $P_B$.
    \item Determine the boundary state $\hat{\mathbf{U}}^*$ by solving the linear system coupling the prescribed boundary conditions and the extrapolated $V_4^*$.
    \item Apply the inverse Lax-Wendroff procedure to construct the linear system and determine the higher-order normal derivatives $\hat{\mathbf{U}}^{*(m)}$.
    \item Construct $(k+1)$-th degree polynomials $p_i(\hat{x}, \hat{y}, t)$ ($i=1, \dots, 4$) via WILW, and evaluate them at $P_G$ according to \eqref{2D Taylor expansion} to yield the exterior state $\hat{\mathbf{U}}_G^{\text{ext}}$.
    \item Finally, transform the exterior state $\hat{\mathbf{U}}_G^{\text{ext}}$ from the local coordinate system back to the global system to obtain $\mathbf{U}_G^{\text{ext}}$. 
\end{enumerate}
\endgroup

\subsection{Two-dimensional extrapolation}\label{2D Outflow}
In this section, we propose a reconstruction-based extrapolation method to determine boundary outflow information. This approach utilizes information from the boundary cell containing $P_G$ and its surrounding neighbors to construct a bivariate polynomial $p(x, y, t) \in P^{k+1}$. The core idea is to expand the reconstruction stencil $\mathcal{E}$ with neighboring cells, and determine $p(x, y, t)$ through a weighted least-squares procedure, ensuring numerical stability during extrapolation, especially for subsonic outflow boundaries. Let $(x_0, y_0)$ denote the center of the boundary cell containing $P_G$. The reconstruction stencil $\mathcal{E}$ is defined as follows:
$$\mathcal{E} = \{ I_n \in \tilde{\Omega} : \sqrt{(x_n - x_0)^2 + (y_n - y_0)^2} \le 3 {\color{red} h } \},$$
where $(x_n, y_n)$ represents the center of cell $I_n$, as illustrated in Fig. \ref{Local coordinate system}.

The polynomial $p(x, y, t) \in {P}^{k+1}$ is determined via weighted least squares using stencil $\mathcal{E}$. The overdetermined system is constructed as follows:
\begin{equation}
    \begin{cases} \iint_{I_n} p_i(x,y,t) \, dxdy = \iint_{I_n} U_i(x,y,t) \, dxdy, \\[2pt]
    \iint_{I_n} \partial_xp_i(x,y,t) \, dxdy = \iint_{I_n} \partial_xU_i(x,y,t) \, dxdy, \\[2pt]
    \iint_{I_n} \partial_yp_i(x,y,t) \, dxdy = \iint_{I_n} \partial_yU_i(x,y,t) \, dxdy, \end{cases}\quad \forall I_n\in\mathcal{E},~i=1,2,3,4.
\end{equation}
We empirically assign weights of $\mathcal{O}(1)$ to the equations involving cell averages and $\mathcal{O}(h)$ to those involving first-order derivatives. In contrast to the one-dimensional reconstruction, the current stencil provides a sufficient number of cells for the least-squares procedure, eliminating the need for second-order derivatives.

In practice, the selection of the $3h$ stencil radius is not strictly fixed. The $3h$ radius is primarily necessary for subsonic outflow boundary conditions, while a compact stencil with a radius of $h$ is often sufficient for other cases. Additionally, all Gauss points on a boundary element share the same stencil $\mathcal{E}$, allowing the reconstruction to be performed only once per element and thereby reducing the computational cost.

\section{Numerical examples}
In this section, we present some numerical examples to demonstrate the performance of our proposed scheme. In all computations, the DG method with $V_h^k$ is coupled with a WILW boundary treatment based on a $(k+1)$-th degree reconstruction polynomial. Unless otherwise stated, the third-order Runge-Kutta method (\ref{RK}) is employed for time discretization. For one-dimensional problems, the time step $\Delta t$ is chosen as:
\begin{equation}\label{1Ddt}
\Delta t = \frac{\text{CFL} \cdot \Delta x^\nu}{\alpha_x},
\end{equation}
and for two-dimensional cases:
\begin{equation}\label{2Ddt}
\Delta t =  \frac{\text{CFL}}{\alpha_x / \Delta x^\nu+ \alpha_y / \Delta y^\nu}, 
\end{equation}
where $\nu=\max(\frac{k+1}{3},1)$, $\alpha_x = \max |\lambda(\mathbf{F}^\prime(\mathbf{U}))|$ and $\alpha_y = \max |\lambda(\mathbf{G}^\prime(\mathbf{U}))|$, representing the global maximum eigenvalues in the $x$- and $y$-direction, respectively.
The CFL number $\text{CFL} = \frac{1}{2k+1}$ is the same as that used in the standard RKDG method. 

For problems involving discontinuities, we adopt the numerical damping term from the Oscillation-Free DG (OFDG) scheme proposed in \cite{ofdg2021, ofdg2022} to suppress spurious oscillations. 
In each internal cell, the artificial damping coefficients depend on the jump of the numerical solution and its derivatives at the cell interfaces. For example, for the one-dimensional case \eqref{1D Scalar Equation}, those damping coefficients in $I_j$ are defined as
$$\sigma_j^l = \frac{2 ( 2l + 1 )}{( 2k - 1 )} \frac{h^l}{l!} \left( [\![ \partial_x^l u_h ]\!]_{j+1/2}^2 + [\![ \partial_x^l u_h ]\!]_{j-1/2}^2 \right)^{1/2}, \quad l=0,1,\dots,k,$$
where the jumps are given by  
$$[\![ \partial_x^l u_h ]\!]_{j+1/2} = \partial_x^l u_h(x_{j+1/2}^+) - \partial_x^l u_h(x_{j+1/2}^-).$$
In particular, for the cell interfaces $x_{1/2}$ and $x_{N+1/2}$, the reconstruction polynomials $p(x)$ on boundary cells $\tilde{I}_0$ and $\tilde{I}_N$ are utilized to evaluate the damping terms.
For the left boundary, where $p \in P^{k+1}(\tilde{I}_0)$, the jumps at $x_{1/2}$ are defined as
$$[\![ \partial_x^l u_h ]\!]_{1/2} = \partial_x^l u_h(x_{1/2}^+) - \partial_x^l p(x_{1/2}^-), \quad l=0,1, \dots, k.$$
The jumps at the right boundary point $x_{N+1/2}$ are determined analogously.
Meanwhile, the time step $\Delta t$ is additionally restricted by the damping coefficient $\sigma_{\rm max} = \max_j \sum_{l=0}^k \sigma_j^l$ according to the following relation:
\begin{equation}
\Delta t = \frac{\text{CFL} \cdot \Delta x^\nu}{\alpha_x+\sigma_{\rm max}}. 
\end{equation}
The two-dimensional cases follow a similar formulation.

\subsection{One-dimensional examples}
\begin{example}  \label{ex1}
(\textbf{Linear advection equation.}) 
\end{example}

First, we consider the initial-boundary value problem of the linear advection equation
\begin{equation}\label{Example1}
\begin{cases}
u_t + u_x = 0, & 0 < x < 2\pi, t > 0, \\
u(x, 0) = -\sin(x), & 0 \le x \le 2\pi, \\
u(0, t) = g(t), & t\geq 0.
\end{cases}
\end{equation}
The left boundary $x = 0$ is an inflow boundary all the time and the right boundary $x = 2\pi$ is an outflow boundary. Following the uniform grid partition in \eqref{1Dmesh}, we consider two cases for the boundary offset, $\delta_1 = 0.01h$ and $0.99h$, while keeping $\delta_2 = 0$. 

We first set $g(t)=\sin(t)$ so that the initial-boundary value problem admits a smooth exact solution. Numerical errors are calculated on the computational domain $\tilde{\Omega}$. The $L^2$ errors and $L^\infty$ errors at time $t = 3$ are shown in Table \ref{Example1 Table}. We can see that for all cases, the schemes are stable and achieve the expected $(k + 1)$-th order of accuracy. Moreover, under the proposed scheme, the errors for different $\delta$ show no significant difference.

\begin{table}[htbp!]
\centering
\caption{Example \ref{ex1}: Numerical errors and orders of the linear  advection equation at $t = 3$.}
\label{Example1 Table}
\begin{tabular}{ccccccccc}
\toprule
\multirow{2}{*}{$N$} & \multicolumn{3}{l}{$\delta = 0.01h$} & &\multicolumn{3}{l}{$\delta = 0.99h$} & \\
\cmidrule(lr){2-5} \cmidrule(lr){6-9}
 & $L^2$ error & Order & $L^\infty$ error & Order & $L^2$ error & Order & $L^\infty$ error & Order \\
\midrule
\multicolumn{9}{l}{$P^1$ WILW} \\
\midrule
40 & 2.67E-03 & -- & 3.28E-03 & --  & 4.06E-03 & --  & 3.95E-03 & --  \\
80 & 6.66E-04 & 2.00 & 8.31E-04 & 1.98 & 7.77E-04 & 2.39 & 8.71E-04 & 2.18 \\
160 & 1.66E-04 & 2.00 & 2.09E-04 & 1.99 & 1.73E-04 & 2.16 & 2.10E-04 & 2.05 \\
320 & 4.16E-05 & 2.00 & 5.25E-05 & 2.00 & 4.19E-05 & 2.05 & 5.24E-05 & 2.01 \\
\midrule
\multicolumn{9}{l}{$P^2$ WILW} \\
\midrule
40 & 3.39E-05 & -- & 3.99E-05 & -- & 3.33E-04 & -- & 2.87E-04 & -- \\
80 & 4.24E-06 & 3.00 & 5.03E-06 & 2.99 & 1.88E-05 & 4.15 & 1.71E-05 & 4.07 \\
160 & 5.31E-07 & 3.00 & 6.27E-07 & 3.01 & 1.21E-06 & 3.95 & 1.20E-06 & 3.83 \\
320 & 6.63E-08 & 3.00  & 7.84E-08 & 3.00 & 9.37E-08 & 3.69 & 1.06E-07 & 3.49 \\
\midrule
\multicolumn{9}{l}{$P^3$ WILW} \\
\midrule
40 & 4.49E-07 & -- & 4.91E-07 & -- & 3.72E-06 & -- & 3.32E-06 & -- \\
80 & 2.81E-08 & 4.00 & 3.07E-08 & 4.00 & 1.31E-07 & 4.83 & 1.12E-07 & 4.89 \\
160 & 1.76E-09 & 4.00 & 1.92E-09 & 4.00 & 4.61E-09 & 4.82 & 3.91E-09 & 4.84 \\
320 & 1.10E-10 & 4.00 & 1.20E-10 & 4.00 & 1.81E-10 & 4.67 & 1.59E-10 & 4.62 \\
\bottomrule
\end{tabular}
\end{table}

Next, we define the boundary condition $g(t)$ as 
\begin{equation}\label{Example1_BC2}
g(t) = \begin{cases}
0, & t \leq \pi, \\
1, & t > \pi.
\end{cases}
\end{equation}
Discontinuities enter the domain through the inflow boundary and are well-captured by employing the OFDG scheme in conjunction with our proposed boundary treatment. For brevity we only present the results for $k=2$, as shown in Fig. \ref{fig:Example1}.

\begin{figure}[htbp!]
    \centering
    
    \begin{subfigure}{0.45\textwidth}
        \centering
        \includegraphics[width=\textwidth]{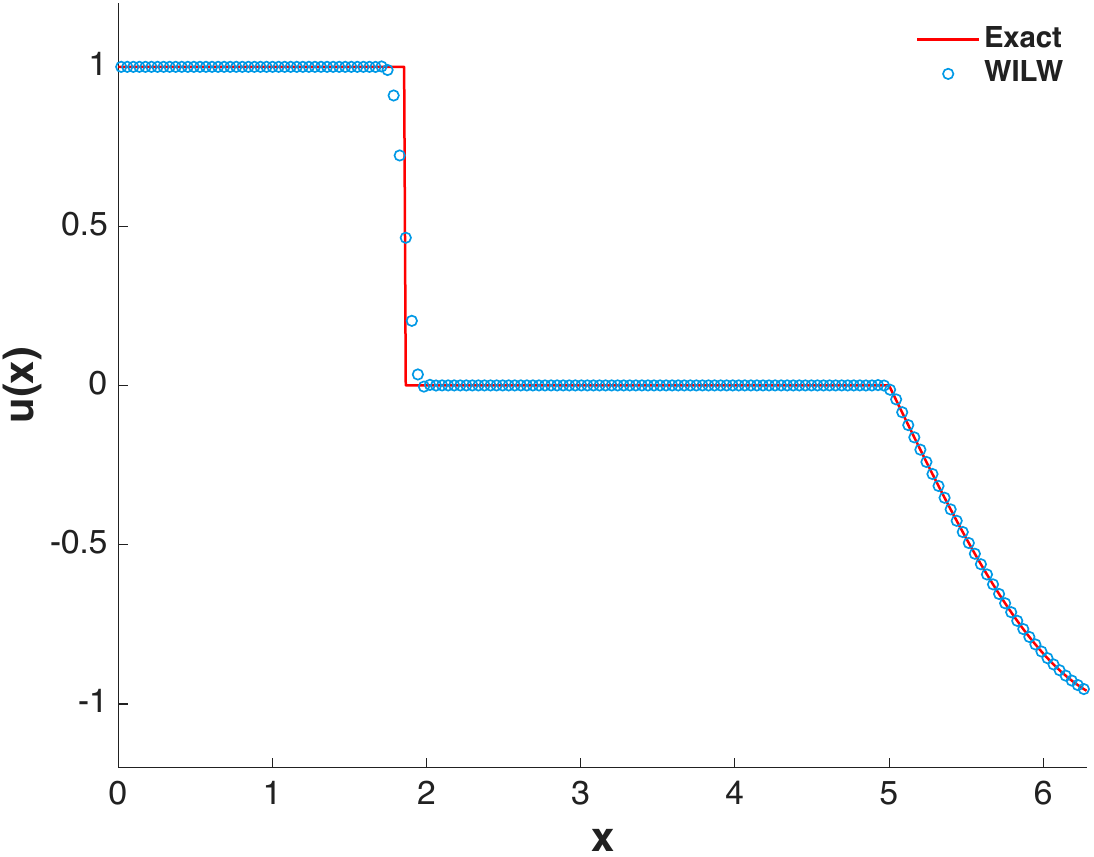}
        \caption{$\delta=0.01h$, WILW}
    \end{subfigure}
    \hspace{0.2cm}
    \begin{subfigure}{0.45\textwidth}
        \centering
        \includegraphics[width=\textwidth]{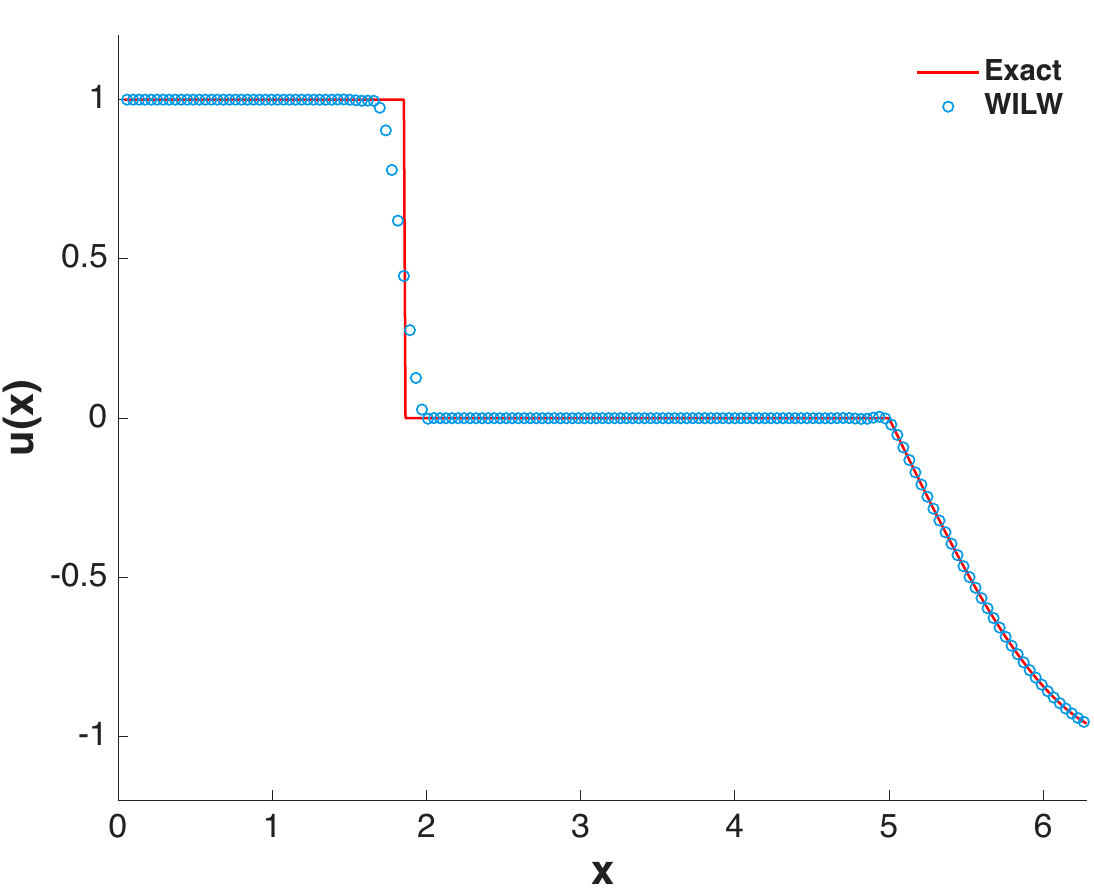}
        \caption{$\delta=0.99h$, WILW}
    \end{subfigure}
    \caption{Example \ref{ex1}: Linear advection equation with boundary condition (\ref{Example1_BC2}). $N=160$, $t=5$, $k=2$. Solid line: exact solution; symbols: numerical solution.}
    \label{fig:Example1}
\end{figure}
\FloatBarrier


\begin{example}\label{ex2}
    \textbf{(1D Burgers' equation.)} 
\end{example}

For nonlinear problems, we consider the Burgers' equation:
\begin{equation}\label{Numerical2}
\begin{cases}
u_t + (\frac{u^2}{2})_x = 0, & -\pi < x < \pi, t > 0, \\
u(x, 0) = 1 + 2\sin(x), & -\pi \le x \le \pi, \\
u(-\pi, t) = g(t), & t\geq 0.
\end{cases}
\end{equation}
We take the boundary condition $g(t) = w(-\pi, t)$, where $w(x, t)$ is the exact solution of the initial value problem on $(- \pi, \pi)$ with periodic boundary conditions and can be obtained by Newton’s method. The mesh configuration remains identical to that in \textbf{Example \ref{ex1}}. 

At $t = 0.3$, the exact solution is still smooth. Errors are shown in Table \ref{Example2 Table}. It is observed that the scheme can always achieve the expected $(k + 1)$-th order accuracy and the ratio $\delta/h$ does not affect the magnitude of errors significantly.

At $t = 2\pi$, a shock enters the inflow boundary and moves to $x = 0$ at $t = 3\pi$. We employ the OFDG scheme to suppress spurious oscillations. For brevity, we only present the results for $k=2$. As illustrated in Fig. \ref{fig:Example2.2}, the shock waves are well-captured.

\begin{table}[htbp!]
\centering
\caption{Example \ref{ex2}: Numerical errors and orders of Burgers’ equation at $t = 0.3$.}
\label{Example2 Table}
\begin{tabular}{ccccccccc}
\toprule
\multirow{2}{*}{$N$} & \multicolumn{3}{l}{$\delta = 0.01h$} & &\multicolumn{3}{l}{$\delta = 0.99h$} & \\
\cmidrule(lr){2-5} \cmidrule(lr){6-9}
 & $L^2$ error & Order & $L^\infty$ error & Order & $L^2$ error & Order & $L^\infty$ error & Order \\
\midrule
\multicolumn{9}{l}{$P^1$ WILW} \\
\midrule
40  & 2.24E-02 & --   & 6.08E-02 & --   & 3.00E-02 & --   & 7.80E-02 & --   \\
80  & 5.29E-03 & 2.08 & 1.75E-02 & 1.80 & 6.45E-03 & 2.22 & 1.75E-02 & 2.16 \\
160 & 1.18E-03 & 2.16 & 4.92E-03 & 1.83 & 1.30E-03 & 2.31 & 4.75E-03 & 1.88 \\
320 & 2.53E-04 & 2.22 & 1.21E-03 & 2.03 & 2.67E-04 & 2.28 & 1.25E-03 & 1.93 \\
\midrule
\multicolumn{9}{l}{$P^2$ WILW} \\
\midrule
40  & 1.41E-03 & --   & 3.62E-03 & --   & 1.56E-03 & --   & 5.14E-03 & --   \\
80  & 1.13E-04 & 3.64 & 3.83E-04 & 3.24 & 3.13E-04 & 2.31 & 1.84E-03 & 1.48 \\
160 & 1.19E-05 & 3.25 & 4.92E-05 & 2.96 & 3.89E-05 & 3.01 & 1.66E-04 & 3.47 \\
320 & 1.37E-06 & 3.11 & 6.10E-06 & 3.01 & 2.79E-06 & 3.80 & 9.28E-06 & 4.16 \\
\midrule
\multicolumn{9}{l}{$P^3$ WILW} \\
\midrule
40  & 2.51E-04 & --   & 7.05E-04 & --   & 1.34E-04 & --   & 3.98E-04 & --   \\
80  & 8.06E-06 & 4.96 & 3.24E-05 & 4.44 & 6.25E-05 & 1.10 & 2.88E-04 & 0.47 \\
160 & 3.35E-07 & 4.59 & 1.91E-06 & 4.09 & 9.57E-07 & 6.03 & 5.93E-06 & 5.60 \\
320 & 1.63E-08 & 4.36 & 9.89E-08 & 4.27 & 4.90E-08 & 4.29 & 2.55E-07 & 4.54 \\
\bottomrule
\end{tabular}
\end{table}

\begin{figure}[htbp!]
    \centering

    \begin{subfigure}{0.45\textwidth}
        \centering
        \includegraphics[width=\textwidth]{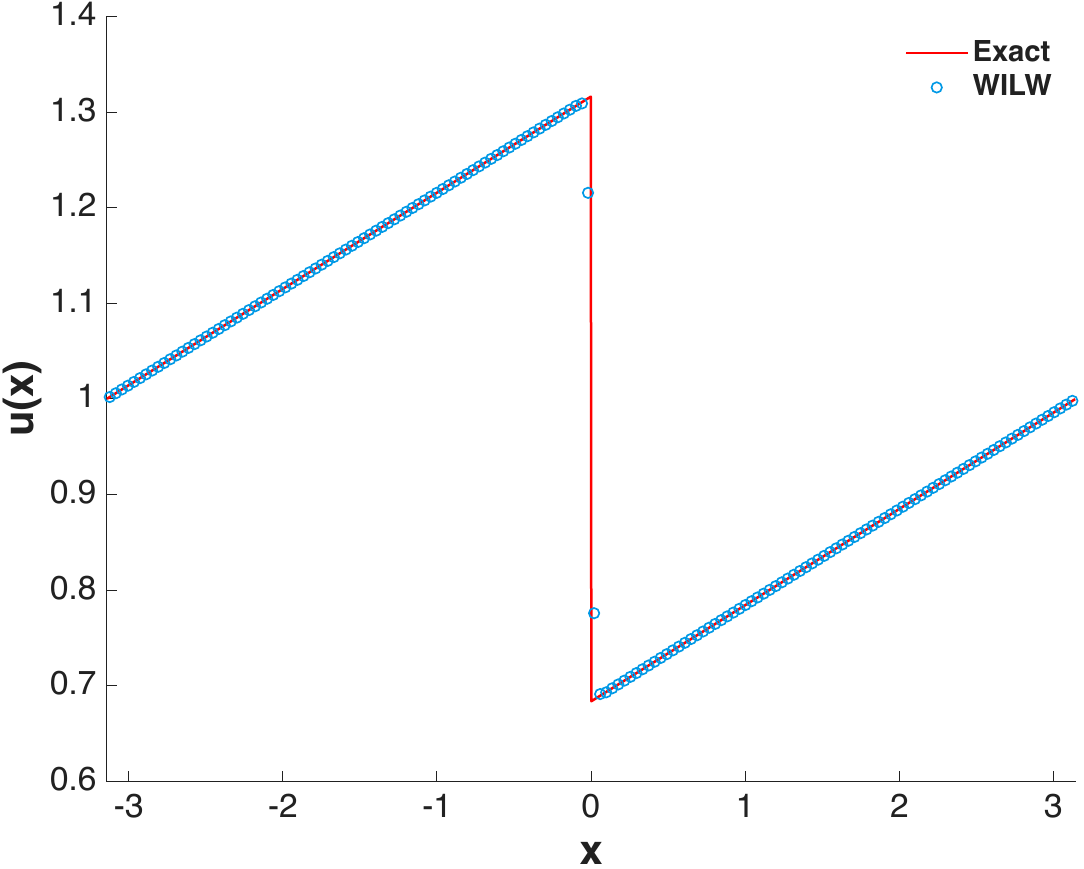}
        \caption{$\delta=0.01h$, WILW}
    \end{subfigure}
    \hspace{0.2cm} 
    \begin{subfigure}{0.45\textwidth}
        \centering
        \includegraphics[width=\textwidth]{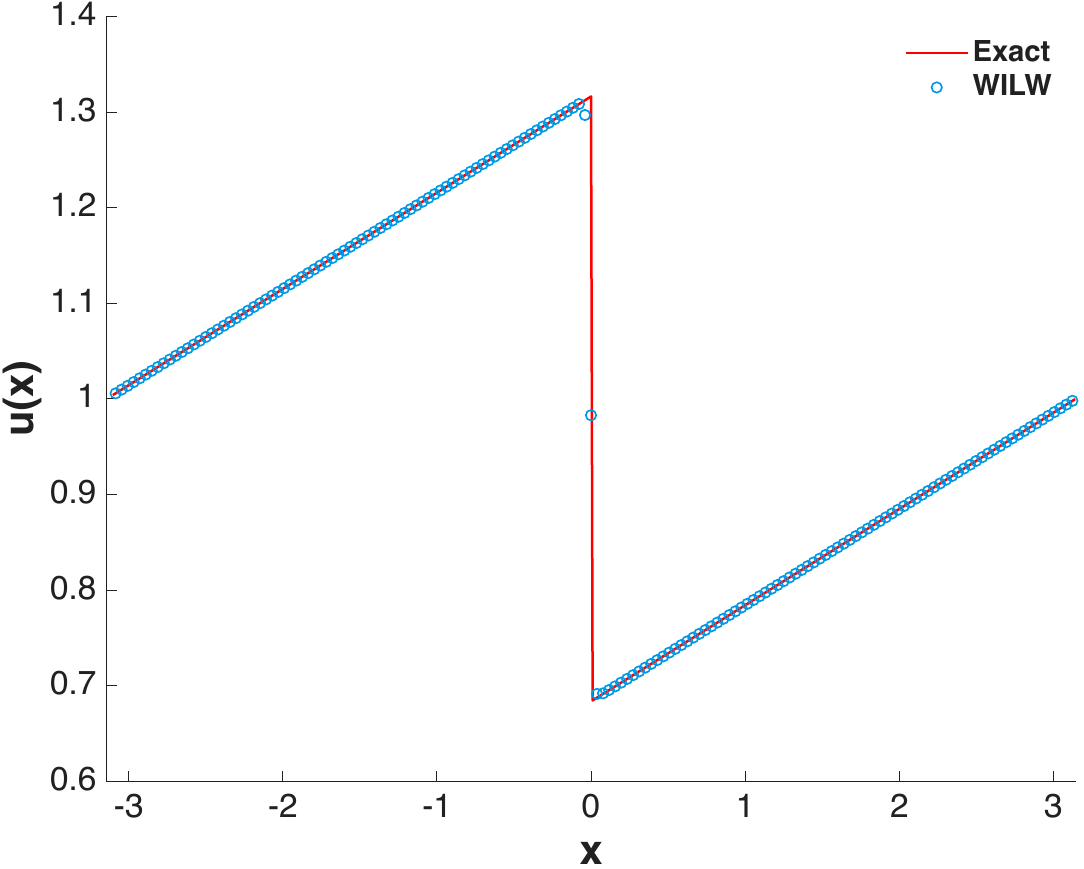}
        \caption{$\delta=0.99h$, WILW}
    \end{subfigure}

    \caption{Example \ref{ex2}: Numerical solutions for Burgers’ equation at $t=3\pi, k=2, N=160$. Solid line: exact solution; symbols: numerical solution.}
    \label{fig:Example2.2}
\end{figure}
\FloatBarrier


\begin{example} 
\textbf{(1D Euler equations.)} \label{ex:1D_Euler}
\end{example}

For systems, we consider 1D compressible Euler equations (\ref{Euler_equations}), the domain is $(0,2)$. The corresponding initial values and boundary conditions are chosen to match the exact solution
$$\begin{cases}
\rho(x,t)=1+0.2\sin\pi (x-t),\\
u(x,t)=1,\\
p(x,t)=2.
\end{cases}$$
In particular, at both boundaries, we have $u-c<0<u<u+c$. Therefore, we need to impose two boundary conditions at $x = 0$ and one boundary condition at $x = 2$. Here, we take the boundary conditions
$$\begin{cases}\rho(0,t)=1-0.2\sin\pi t,\\u(0,t)=1,\end{cases}
\quad \rho(2,t)=1-0.2\sin\pi t.$$
Following the uniform grid partition in \eqref{1Dmesh}, we consider two cases of boundary offsets by setting $\delta_1 = \delta_2 = 0.01h$ and $0.99h$. The numerical errors for density are presented in Table \ref{Example3 Table}, confirming that our method maintains the expected convergence rate and that the ratio $\delta/h$ does not significantly affect the error magnitude.

\begin{table}[htbp]
\centering
\caption{Example \ref{ex:1D_Euler}: Density errors and orders of Euler equations at $t = 2$.}
\label{Example3 Table}
\begin{tabular}{ccccccccc}
\toprule
\multirow{2}{*}{$N$} & \multicolumn{3}{l}{$\delta = 0.01h$} & &\multicolumn{3}{l}{$\delta = 0.99h$} & \\
\cmidrule(lr){2-5} \cmidrule(lr){6-9}
 & $L^2$ error & Order & $L^\infty$ error & Order & $L^2$ error & Order & $L^\infty$ error & Order \\
\midrule
\multicolumn{9}{l}{$P^1$ WILW} \\
\midrule
40 & 3.46E-04 & -- & 5.00E-04 & --   &   1.46E-03 & --  & 1.69E-03   & --  \\
80 & 8.51E-05 & 2.02 & 1.23E-04 & 2.03 & 3.54E-04 & 2.04 & 4.14E-04 & 2.03 \\
160 & 2.11E-05 & 2.01 & 3.04E-05 & 2.01 & 8.82E-05 & 2.01 & 1.03E-04 & 2.00 \\
320 & 5.26E-06 & 2.00 & 7.56E-06 & 2.00 & 2.21E-05 & 2.00 & 2.59E-05 & 2.00 \\
\midrule
\multicolumn{9}{l}{$P^2$ WILW} \\
\midrule
40 & 1.34E-05 & -- & 1.94E-05 & -- & 7.40E-05 & -- & 9.77E-05 & -- \\
80 & 1.73E-06 & 2.96 & 2.49E-06 & 2.96 & 1.05E-05 & 2.82 & 1.34E-05 & 2.87 \\
160 & 2.18E-07 & 2.99 & 3.14E-07 & 2.99 & 1.37E-06 & 2.94 & 1.65E-06 & 3.02 \\
320 & 2.74E-08 & 2.99 & 3.93E-08 & 3.00 & 1.74E-07 & 2.97 & 2.05E-07 & 3.01 \\
\midrule
\multicolumn{9}{l}{$P^3$ WILW} \\
\midrule
40 & 5.20E-08 & -- & 8.16E-08 & -- & 4.54E-07 & -- & 4.81E-07 & -- \\
80 & 3.24E-09 & 4.00 & 5.10E-09 & 4.00 & 1.83E-08 & 4.63 & 2.06E-08 & 4.55 \\
160 & 2.02E-10 & 4.00 & 3.19E-10 & 4.00 & 8.72E-10 & 4.39 & 1.07E-09 & 4.26 \\
320 & 1.25E-11 & 4.01 & 2.01E-11 & 3.99 & 4.90E-11 & 4.15 & 6.37E-11 & 4.07 \\
\bottomrule
\end{tabular}
\end{table}

\begin{example}
  \textbf{(Blast wave problem.)} \label{ex:1D_Euler_shock}  
\end{example}

We then test our method for the Euler equations with shocks. We consider the interaction of two blast waves \cite{twoblast1984}. The initial data are
$$(\rho, u, p) = 
\begin{cases} 
(1, 0, 1000), & x \in [0, 0.1), \\[6pt]
(1, 0, 0.01), & x \in [0.1, 0.9), \\[6pt]
(1, 0, 100),  & x \in [0.9, 1.0].
\end{cases}$$
There are solid wall boundary conditions at both $x = 0$ and $x = 1$. This problem involves multiple reflections of shocks and rarefactions off the walls. There are also multiple interactions of shocks and rarefactions with each other and with contact discontinuities. The reference solution is computed using the fifth order WENO scheme on body-fitted uniform mesh with $\Delta x = 1/16000$. The mesh configuration remains identical to that in \textbf{Example \ref{ex:1D_Euler}}. We employ the OFDG scheme to suppress spurious oscillations. For brevity, we only present the results for $k=2$ and $N=640$ at $t=0.038$. Numerical results with different boundary treatment methods are provided in Fig. \ref{fig:Example4}. The figures demonstrate that the proposed scheme effectively captures the structural features of the solution. The minor deviations at the peaks and troughs are primarily attributed to the inherent numerical dissipation of the OFDG scheme rather than the boundary treatment.

\begin{figure}[htbp]
    \centering
    
    \begin{subfigure}{0.45\textwidth}
        \centering
        \includegraphics[width=\textwidth]{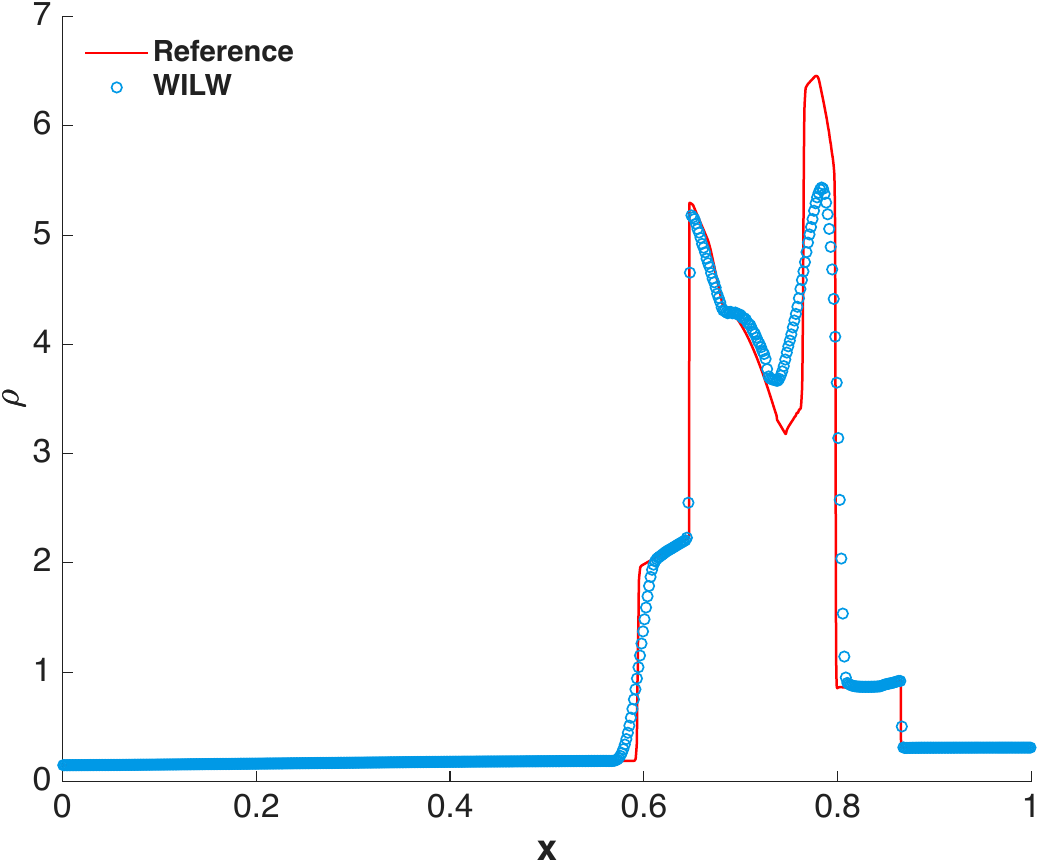}
        \caption{$\delta=0.01h$, WILW}
    \end{subfigure}
    \hspace{0.2cm} 
    \begin{subfigure}{0.45\textwidth}
        \centering
        \includegraphics[width=\textwidth]{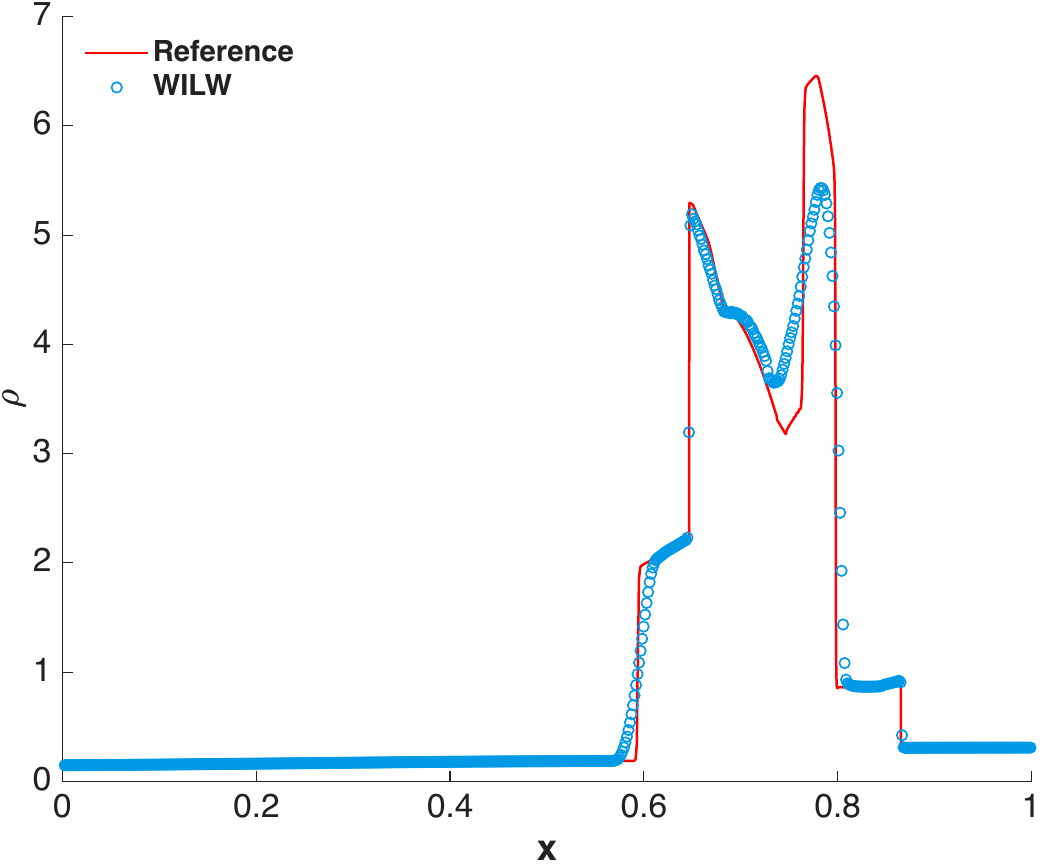}
        \caption{$\delta=0.99h$, WILW}
    \end{subfigure}

    \caption{Example \ref{ex:1D_Euler_shock}: The density profiles of the blast wave problem at $t=0.038$,~$k=2$,~$N=640$. Solid line: reference solution; symbols: numerical solution.}
    \label{fig:Example4}
\end{figure}
\FloatBarrier


\subsection{Two-dimensional examples}

\begin{example}
    \textbf{(2D Burgers' equation.)} \label{eq:2D_burgers}
\end{example}

We start our two-dimensional examples with the 2D Burgers' equation
\begin{equation}\label{2D Burgers equation}
\begin{cases}
u_t + \frac{1}{2}(u^2)_x + \frac{1}{2}(u^2)_y = 0, & (x,y) \in \Omega, \quad t > 0, \\[6pt]
u(x,y,0) = 0.75 + 0.5\sin[\pi(x+y)], & (x,y) \in \bar{\Omega}, \\[6pt]
u(x,y,t) = g(x,y,t), & (x,y) \in \Gamma, \quad t > 0,
\end{cases}
\end{equation}
where
\begin{equation}\label{2D_Wave_Square}
    \Omega = (-1, 1) \times (-1, 1),\quad \Gamma = \{(x, y) : x = -1 \text{ or } y = -1\},
\end{equation}
or
\begin{equation}\label{2D_Wave_Disk}
    \Omega = \{(x,y) : x^2 + y^2 < 0.5\}, \quad \Gamma = \{(x,y) : x^2 + y^2 = 0.5 \text{ and } x + y \le 0\}.
\end{equation}
Here $g(x,y,t) = w(x,y,t)$, where $w(x,y,t)$ is the exact solution of the 2D Burgers' equation with the same initial value and periodic boundary conditions defined on $[-1,1]^2$. 

For the rectangular domain $\Omega$, an unfitted mesh with an offset scale $\delta$ is constructed to test our boundary treatment. For a circular domain $\Omega$, the computational domain is discretized by partitioning its bounding square into a uniform mesh. The effective computational domain $\tilde{\Omega}$ is then formed by selecting all grid cells entirely contained within $\Omega$, as illustrated in Fig. \ref{Square and Disk}. Notably, for circular domains, a Gauss point on the computational inflow boundary may project onto an outflow segment of the physical boundary. In the absence of available inflow conditions, such cases are treated as outflow for the calculation.

\begin{figure}[htbp!]
    \centering 
    \begin{subfigure}{0.30\textwidth}
        \centering
        \includegraphics[width=\textwidth]{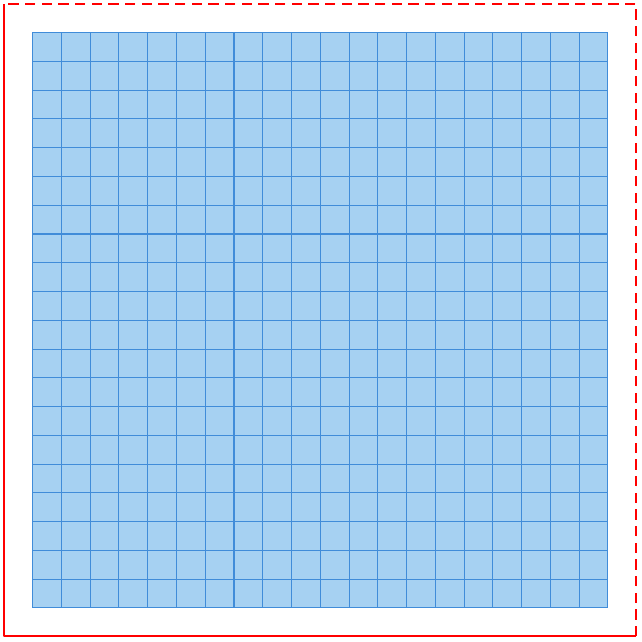}
        \caption{Square domain}
    \end{subfigure}    
    \hspace{1cm}
    \begin{subfigure}{0.30\textwidth}
        \centering
        \includegraphics[width=\textwidth]{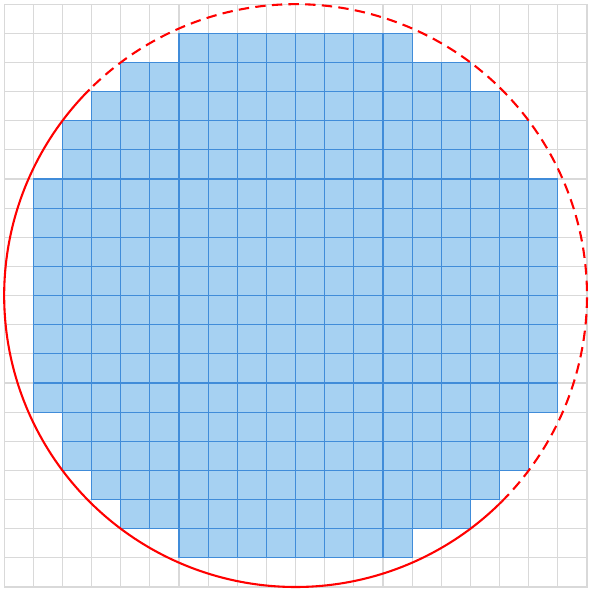}
        \caption{Circular domain}
    \end{subfigure}   
    \caption{Example \ref{eq:2D_burgers}: Domain $\Omega$ and $\tilde{\Omega}$ of the 2D Burgers' equation. Solid lines: inflow boundary; dashed lines: outflow boundary.}
    \label{Square and Disk}
\end{figure}

The solution remains smooth at $t = 0.15$. As shown in Table \ref{Example5 Table1} and Table \ref{Example5 Table2}, the method always attains the designed accuracy.  At $t = 6$, multiple shocks have formed outside and traversed the computational domain, with one specifically moving to $x = 0$. As shown by the diagonal cuts in Fig. \ref{fig:Example5.1} and \ref{fig:Example5.2}, the shock waves in both regions are well-captured.

\begin{table}[htbp!]
\centering
\caption{Example \ref{eq:2D_burgers}: Numerical errors for the 2D Burgers' equation at $t = 0.15$. Square domain (\ref{2D_Wave_Square}) with $\Delta x = 2/(N+2\delta/h)$ and $\Delta y = 2/(N+2\delta/h)$.}
\label{Example5 Table1}
\begin{tabular}{ccccccccc}
\toprule
\multirow{2}{*}{$N$} & \multicolumn{3}{l}{$\delta = 0.01h$} & &\multicolumn{3}{l}{$\delta = 0.99h$} & \\
\cmidrule(lr){2-5} \cmidrule(lr){6-9}
 & $L^2$ error & Order & $L^\infty$ error & Order & $L^2$ error & Order & $L^\infty$ error & Order \\
\midrule
\multicolumn{9}{l}{$P^1$ WILW} \\
\midrule
40  & 4.58E-03 & --   & 2.33E-02 & --   & 4.32E-03 & --   & 2.75E-02 & --   \\
80  & 9.89E-04 & 2.21 & 5.41E-03 & 2.10 & 9.87E-04 & 2.13 & 6.74E-03 & 2.03 \\
160 & 2.24E-04 & 2.14 & 1.20E-03 & 2.17 & 2.25E-04 & 2.14 & 1.48E-03 & 2.19 \\
320 & 5.37E-05 & 2.06 & 2.77E-04 & 2.12 & 5.35E-05 & 2.07 & 3.19E-04 & 2.21 \\
\midrule
\multicolumn{9}{l}{$P^2$ WILW} \\
\midrule
40  & 4.06E-04 & --   & 2.60E-03 & --   & 3.74E-04 & --   & 2.83E-03 & --   \\
80  & 3.89E-05 & 3.39 & 3.62E-04 & 2.85 & 4.17E-05 & 3.16 & 3.82E-04 & 2.89 \\
160 & 4.21E-06 & 3.21 & 4.51E-05 & 3.00 & 4.52E-06 & 3.21 & 4.34E-05 & 3.14 \\
320 & 5.00E-07 & 3.07 & 5.53E-06 & 3.03 & 5.21E-07 & 3.12 & 5.43E-06 & 3.00 \\
\midrule
\multicolumn{9}{l}{$P^3$ WILW} \\
\midrule
40  & 3.33E-05 & --   & 2.23E-04 & --   & 2.87E-05 & --   & 2.64E-04 & --   \\
80  & 1.66E-06 & 4.33 & 1.56E-05 & 3.84 & 1.58E-06 & 4.18 & 1.56E-05 & 4.08 \\
160 & 9.15E-08 & 4.18 & 9.90E-07 & 3.97 & 9.00E-08 & 4.14 & 1.01E-06 & 3.96 \\
320 & 5.48E-09 & 4.06 & 6.06E-08 & 4.03 & 5.40E-09 & 4.06 & 6.08E-08 & 4.05 \\
\bottomrule
\end{tabular}
\end{table}

\begin{table}[htbp!]
\centering
\caption{Example \ref{eq:2D_burgers}: Numerical errors for the 2D Burgers equation at $t = 0.15$. Disk domain (\ref{2D_Wave_Disk}) with $\Delta x = 2\sqrt{0.5}/N$ and $\Delta y = 2\sqrt{0.5}/N$.}
\label{Example5 Table2}
\begin{tabular}{ccccc}
\toprule
 $N$ & $L^2$ error & Order & $L^\infty$ error & Order \\
\midrule
\multicolumn{5}{l}{$P^1$ WILW} \\
\midrule
40  & 1.49E-03 & --   & 1.46E-02 & --    \\
80  & 2.87E-04 & 2.37 & 3.17E-03 & 2.20  \\
160 & 6.28E-05 & 2.19 & 6.74E-04 & 2.23  \\
320 & 1.50E-05 & 2.06 & 1.63E-04 & 2.05  \\
\midrule
\multicolumn{5}{l}{$P^2$ WILW} \\
\midrule
40  & 1.23E-04 & --   & 2.23E-03 & --   \\
80  & 1.26E-05 & 3.28 & 1.86E-04 & 3.58 \\
160 & 1.53E-06 & 3.05 & 2.27E-05 & 3.03 \\
320 & 1.38E-07 & 3.47 & 2.52E-06 & 3.17 \\
\midrule
\multicolumn{5}{l}{$P^3$ WILW} \\
\midrule
40  & 8.27E-06 & --   & 1.23E-04 & --   \\
80  & 3.68E-07 & 4.49 & 9.10E-06 & 3.76 \\
160 & 1.78E-08 & 4.37 & 3.22E-07 & 4.82 \\
320 & 1.05E-09 & 4.09 & 2.20E-08 & 3.88 \\
\bottomrule
\end{tabular}
\end{table}

\begin{figure}[htbp]
    \centering
    \begin{subfigure}{0.45\textwidth}
        \centering
        \includegraphics[width=\textwidth]{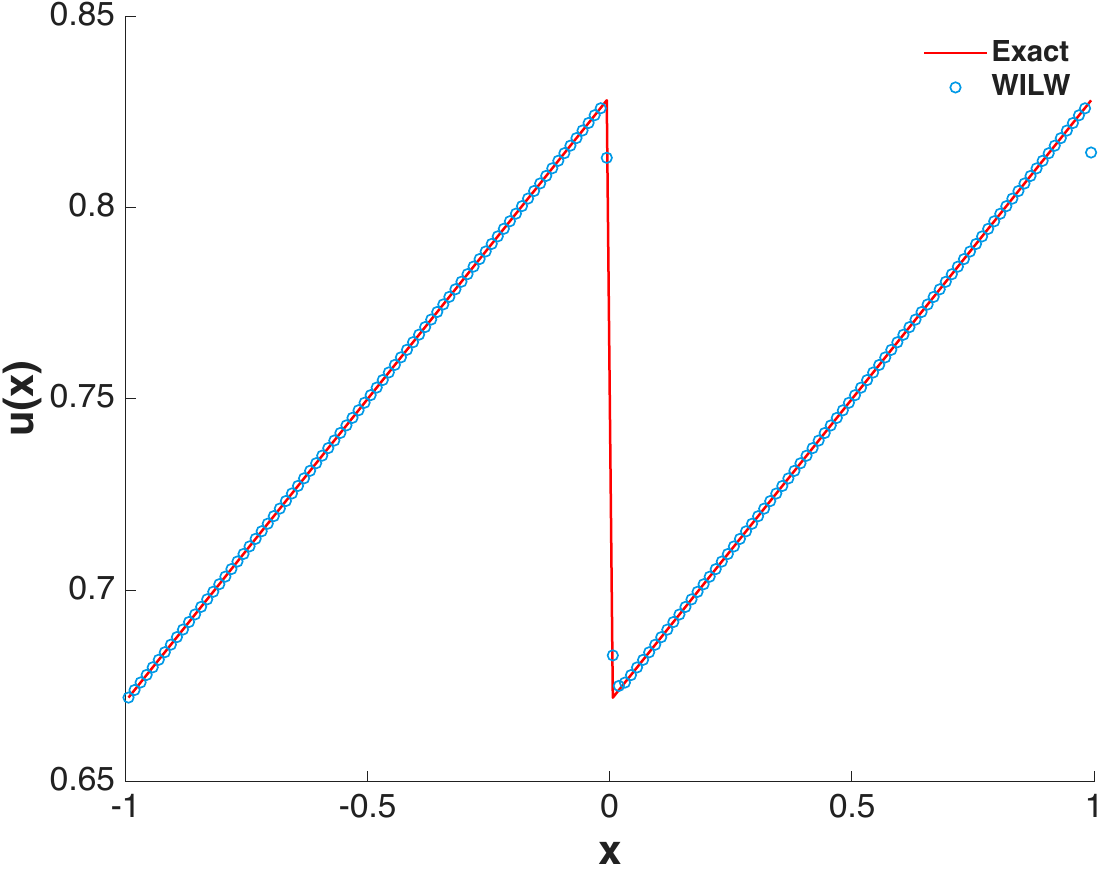}
        \caption{$\delta=0.01h$, WILW}
    \end{subfigure}
    \hspace{0.2cm}
    \begin{subfigure}{0.45\textwidth}
        \centering
        \includegraphics[width=\textwidth]{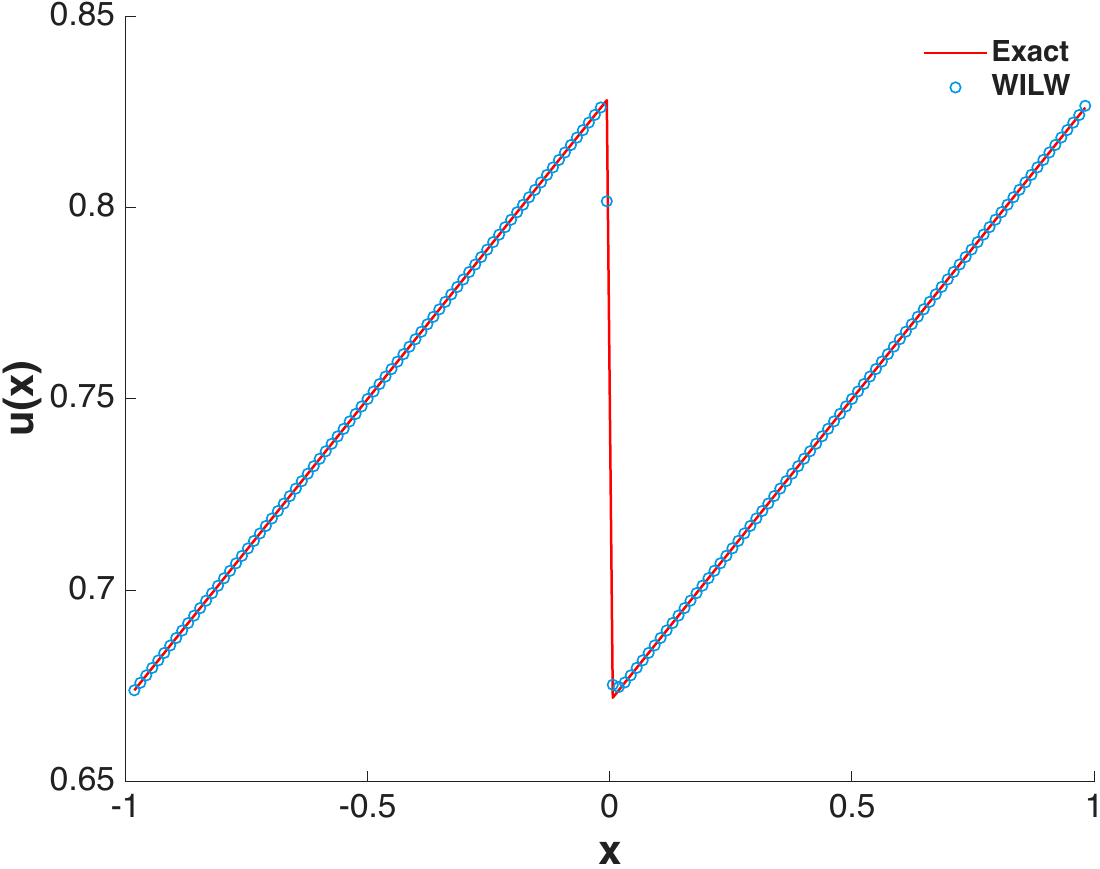}
        \caption{$\delta=0.99h$, WILW}
    \end{subfigure}
    \caption{Example \ref{eq:2D_burgers}: Numerical errors for the 2D Burgers' equation at $t = 6$, $k=2$. Square domain (\ref{2D_Wave_Square}) with $\Delta x = 2/(N+2\delta/h)$ and $\Delta y = 2/(N+2\delta/h)$,~$N=160$. Solid line: exact solution; symbols: numerical solution.}
    \label{fig:Example5.1}
\end{figure}

\begin{figure}[htbp]
    \centering 
     \includegraphics[width=0.45\textwidth]{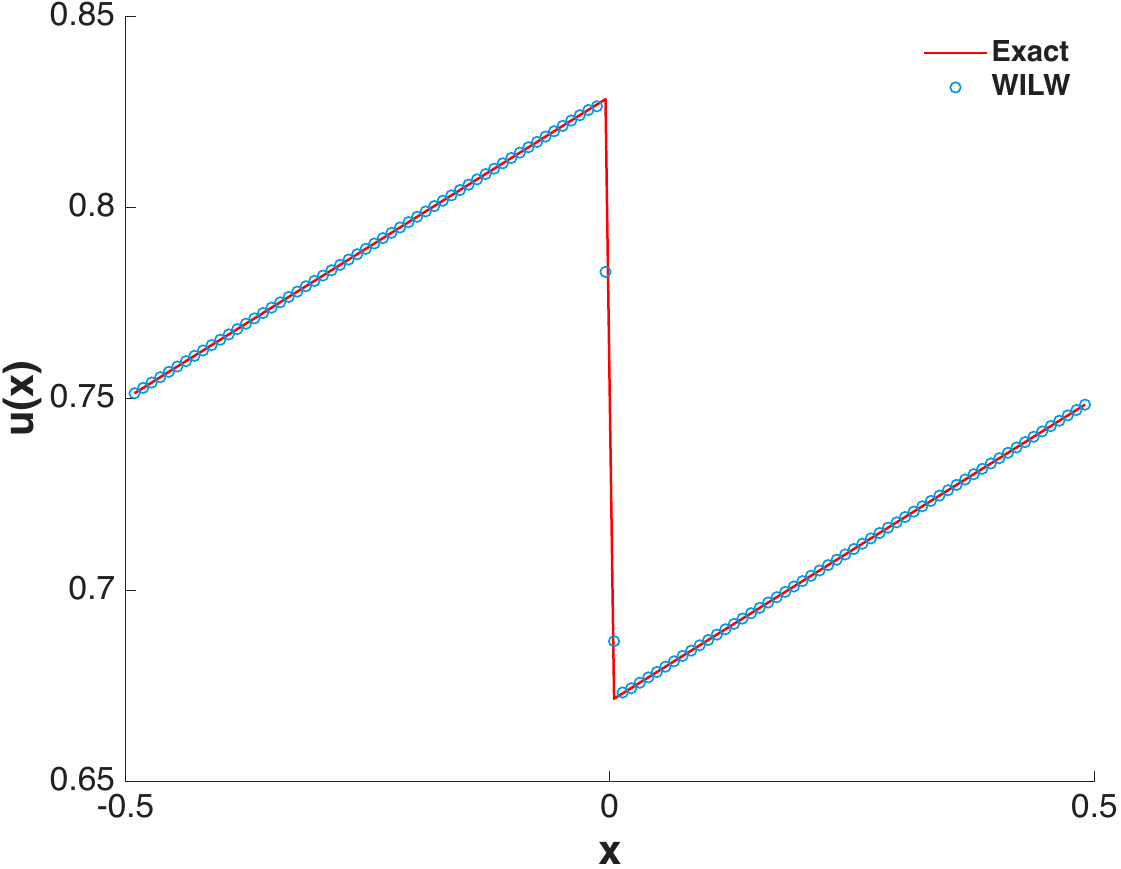}
    \caption{Example \ref{eq:2D_burgers}: Numerical errors for the 2D Burgers' equation at $t = 6$, $k=2$. Disk domain (\ref{2D_Wave_Disk}) with $\Delta x = 2\sqrt{0.5}/N$ and $\Delta y = 2\sqrt{0.5}/N$,~$N=160$. Solid line: exact solution; symbols: numerical solution.}
    \label{fig:Example5.2}
\end{figure}


\begin{example}
\textbf{(Isentropic vortex.)} \label{ex:vortex}   
\end{example}

For two-dimensional systems, we test the isentropic vortex evolution problem for the Euler equation. Similar to the previous cases, the simulations are conducted on both the rectangular domain (\ref{2D_Wave_Square}) and the circular domain (\ref{2D_Wave_Disk}). The mean flow is defined as $\rho = p = u = v = 1$. An isentropic vortex perturbation, centered at $(x_0, y_0)$, is imposed on this mean flow. The perturbations in velocity $(u, v)$ and temperature $T = p / \rho$ are given as follows, while the entropy $S = p / \rho^\gamma$ remains unperturbed ($\delta S = 0$):
$$(\delta u, \delta v) = \frac{\varepsilon}{2\pi} e^{0.5(1 - r^2)} (-\bar{y}, \bar{x}), \quad \delta T = -\frac{(\gamma - 1)\varepsilon^2}{8\gamma\pi^2} e^{1 - r^2},$$
where $(\bar{x}, \bar{y}) = (x - x_0, y - y_0)$, $r^2 = \bar{x}^2 + \bar{y}^2$, and the vortex strength is $\varepsilon = 5$. The exact solution $U(x, y, t)$ corresponds to the passive convection of the vortex with the mean velocity. Boundary conditions are prescribed using this exact solution as required. Notably, the number of boundary conditions is determined by the signs of the four eigenvalues $a_m$, which vary both spatially and temporally.
The $L^2$ and $L^\infty$ errors summarized in Tables \ref{Example6 Table1} and \ref{Example6 Table2} show that the numerical results achieve the expected convergence rates. 
 
\begin{table}[htbp]
\centering
\caption{Example \ref{ex:vortex}: Density errors for the vortex evolution problem in the square domain (\ref{2D_Wave_Square}) at $t = 1.0$, the vortex is initially positioned at $(0, 0)$, with $\Delta x = 2/(N+2\delta/h)$ and $\Delta y = 2/(N+2\delta/h)$.}
\label{Example6 Table1}
\begin{tabular}{ccccccccc}
\toprule
\multirow{2}{*}{$N$} & \multicolumn{3}{l}{$\delta = 0.01h$} & &\multicolumn{3}{l}{$\delta = 0.99h$} & \\
\cmidrule(lr){2-5} \cmidrule(lr){6-9}
 & $L^2$ error & Order & $L^\infty$ error & Order & $L^2$ error & Order & $L^\infty$ error & Order \\
\midrule
\multicolumn{9}{l}{$P^1$ WILW} \\
\midrule
40  & 6.94E-05 & -- & 2.44E-04 & --  & 1.57E-04 & -- & 3.21E-04 & -- \\
80  & 1.73E-05 & 2.00 & 6.20E-05 & 1.98 & 4.17E-05 & 1.91 & 9.41E-05 & 1.77 \\
160 & 4.32E-06 & 2.00 & 1.55E-05 & 2.00 & 1.08E-05 & 1.95 & 2.94E-05 & 1.68 \\
320 & 1.08E-06 & 2.00 & 3.88E-06 & 2.00 & 2.76E-06 & 1.97 & 8.54E-06 & 1.78 \\
\midrule
\multicolumn{9}{l}{$P^2$ WILW} \\
\midrule
40  & 1.57E-06 & -- & 4.81E-06 & -- & 1.49E-06 & -- & 5.50E-06 & -- \\
80  & 2.42E-07 & 2.69 & 7.45E-07 & 2.69 & 2.48E-07 & 2.59 & 9.06E-07 & 2.60 \\
160 & 3.56E-08 & 2.77 & 1.22E-07 & 2.61 & 3.78E-08 & 2.72 & 1.31E-07 & 2.79 \\
320 & 5.08E-09 & 2.81 & 1.92E-08 & 2.67 & 5.82E-09 & 2.70 & 2.05E-08 & 2.68 \\
\midrule
\multicolumn{9}{l}{$P^3$ WILW} \\
\midrule
40  & 4.97E-09 & -- & 2.07E-08 & --  & 3.99E-09 & -- & 1.79E-08 & -- \\
80  & 3.06E-10 & 4.02 & 1.26E-09 & 4.04 & 2.75E-10 & 3.86 & 1.17E-09 & 3.94 \\
160 & 2.10E-11 & 3.86 & 8.85E-11 & 3.83 & 2.01E-11 & 3.77 & 9.26E-11 & 3.66 \\
320 & 1.42E-12 & 3.89 & 6.83E-12 & 3.70 & 1.44E-12 & 3.80 & 7.05E-12 & 3.72 \\
\bottomrule
\end{tabular}
\end{table}
\FloatBarrier

\begin{table}[!ht]
\centering
\caption{Example \ref{ex:vortex}: Density errors for the vortex evolution problem in the disk domain (\ref{2D_Wave_Disk}) at $t = 0.1$, the vortex is initially positioned at $(0.3, 0.3)$, with $\Delta x = 2\sqrt{0.5}/N$ and $\Delta y = 2\sqrt{0.5}/N$.}
\label{Example6 Table2}
\begin{tabular}{ccccc}
\toprule
 $N$ & $L^2$ error & Order & $L^\infty$ error & Order \\
\midrule
\multicolumn{5}{l}{$P^1$ WILW} \\
\midrule
40  & 5.47E-05 & 2.07 & 2.06E-04 & 1.95 \\
80  & 1.39E-05 & 1.98 & 5.24E-05 & 1.98 \\
160 & 3.54E-06 & 1.97 & 1.38E-05 & 1.92 \\
320 & 9.03E-07 & 1.97 & 4.03E-06 & 1.78 \\
\midrule
\multicolumn{5}{l}{$P^2$ WILW} \\
\midrule
40  & 9.11E-07 & --   & 3.03E-06 & --   \\
80  & 1.50E-07 & 2.61 & 5.70E-07 & 2.41 \\
160 & 2.35E-08 & 2.67 & 1.05E-07 & 2.44 \\
320 & 3.70E-09 & 2.67 & 1.64E-08 & 2.68 \\
\midrule
\multicolumn{5}{l}{$P^3$ WILW} \\
\midrule
40  & 3.30E-09 & --   & 1.56E-08 & --   \\
80  & 2.06E-10 & 4.00 & 9.55E-10 & 4.03 \\
160 & 1.39E-11 & 3.89 & 6.36E-11 & 3.91 \\
320 & 9.83E-13 & 3.82 & 4.54E-12 & 3.81 \\
\bottomrule
\end{tabular}
\end{table}
\FloatBarrier


\begin{example}
\textbf{(Double Mach reflection.)}\label{ex:double_mach}
\end{example}

Next we apply our method to the solid wall boundary condition $(u,v) \cdot \mathbf{n} = 0$ by considering the double Mach reflection problem. This case simulates a right moving strong shock wave with Mach number 10 hitting a wedge with an inclination angle of $30^\circ$. This interaction generates a complex double Mach reflection wave system. To implement the solid wall condition using the reflection technique, the standard approach is to solve an equivalent problem. In this setup the solid wall remains horizontal while the shock wave forms a $60^\circ$ angle with the wall.

To demonstrate the capability of our method in handling non-grid-aligned boundaries, we adopt the original physical configuration. We use $(0, 23/12)$ as an anchor point to partition the domain into a uniform grid where $\Delta x = \Delta y$ along the $x$ axis and $y$ axis as shown in Fig. \ref{DMRDomain}. Initially, a Mach 10 strong shock is positioned at $x = 0$, perpendicular to the x-axis and moving to the right. For the bottom boundary $y = 0$, exact post-shock conditions are imposed.  The top boundary $y = 23/12 + \sqrt{3}/2$ is set as a time-dependent Dirichlet boundary. 

\begin{figure}[htbp!]
    \centering 
    \includegraphics[width=0.6\textwidth]{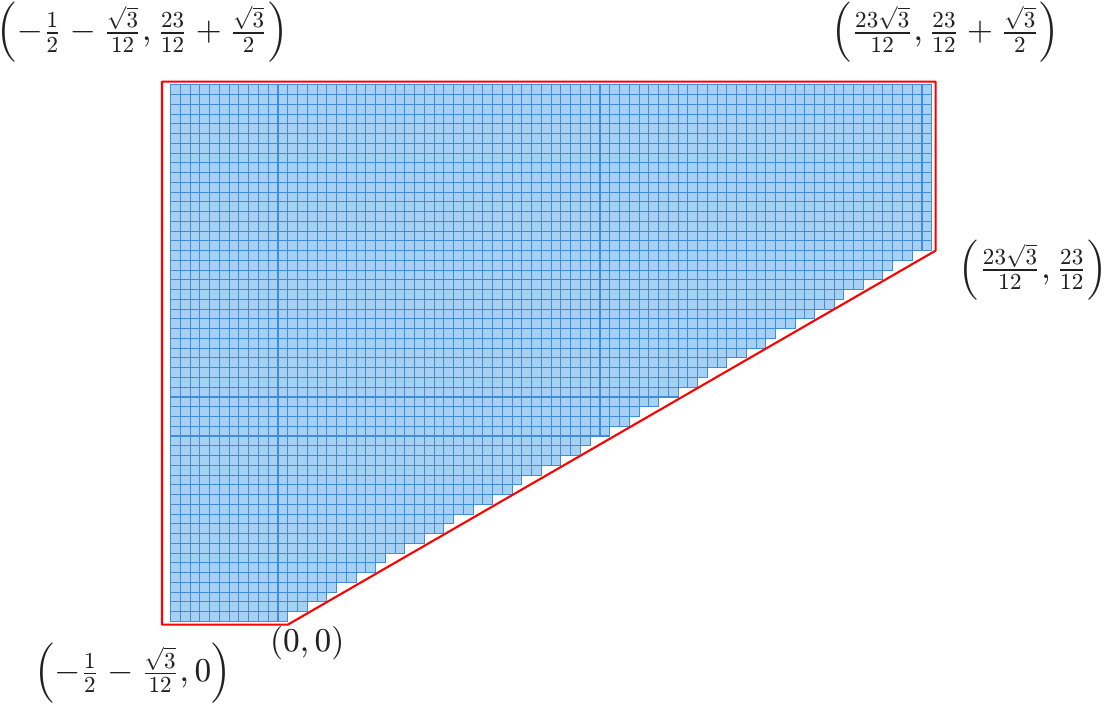}
    \caption{Example \ref{ex:double_mach}: Computational domain for Double Mach Reflection.}
    
    \label{DMRDomain}
\end{figure}

At the solid wall boundaries, WILW is employed for boundary treatment. Based on the physical constraint of zero normal velocity, the boundary states are reconstructed by extrapolating three outflow characteristic variables. To suppress spurious numerical oscillations triggered by the strong shock, we adopt the OFDG scheme.  Notably, the numerical stiffness induced by the damping term in the strong shock region severely restricts the time step size of the standard RK3 method. Consequently, we adopt the third order modified exponential Runge-Kutta method described in Reference \cite{ofdg2022} for time marching. This approach restores the time step to the level of standard DG schemes. Fig. \ref{fig:Example7.1} and \ref{fig:Example7.2} illustrate the density contours and zoomed in views near the double Mach stem for $k=2$ at $t=0.2$ with grid sizes $\Delta x = \Delta y = 1/320$ and $1/640$.
Results show that the resolution of flow structures improves significantly as the mesh is refined. The scheme clearly captures the vortex structures triggered by the Kelvin-Helmholtz instability at the Mach stem.

\begin{figure}[htbp!]
    \centering
    \begin{subfigure}{0.45\textwidth}
        \centering
        \includegraphics[width=\textwidth]{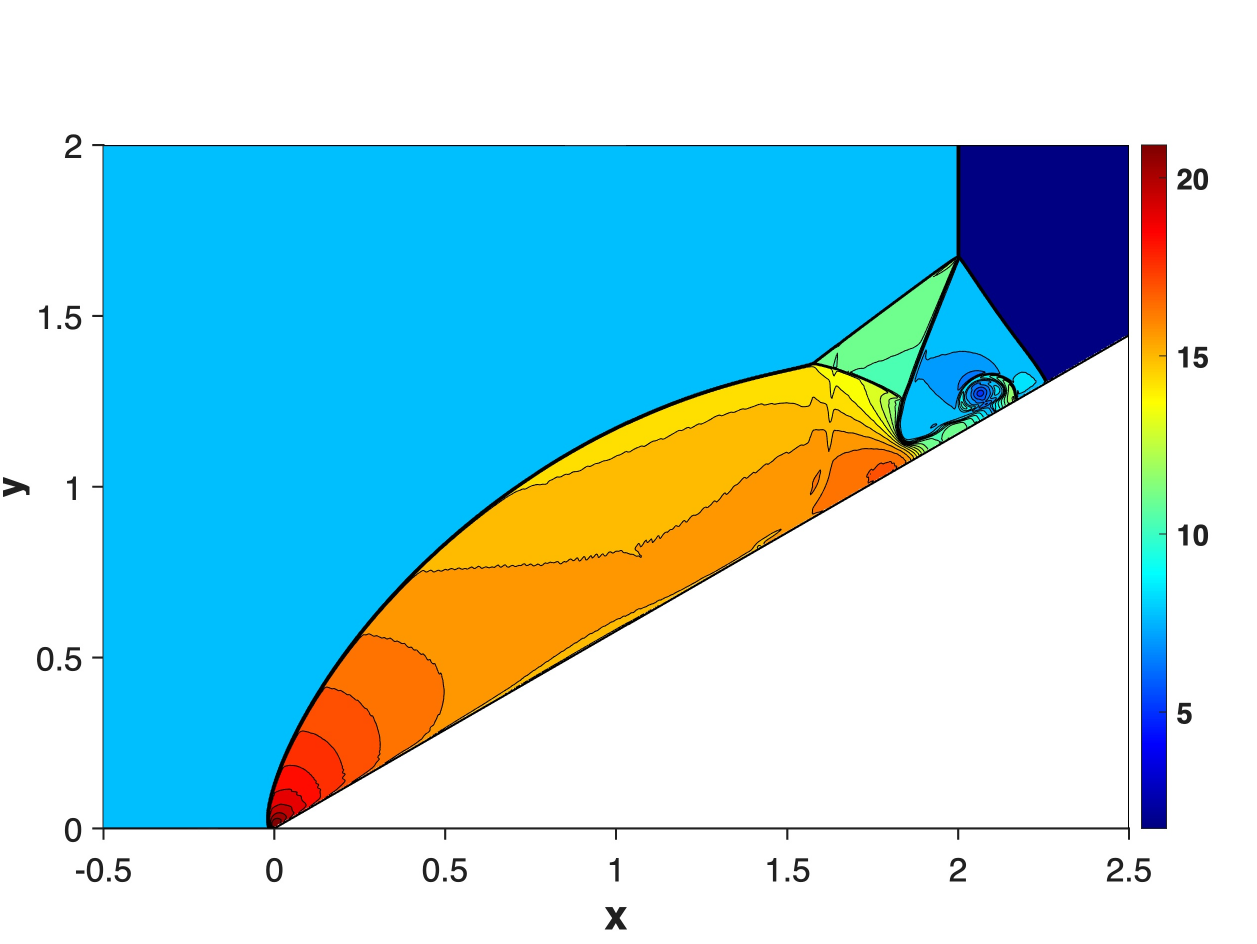}
        \caption{WILW}
    \end{subfigure}
    \hspace{0.1cm}
    \begin{subfigure}{0.45\textwidth}
        \centering
        \includegraphics[width=\textwidth]{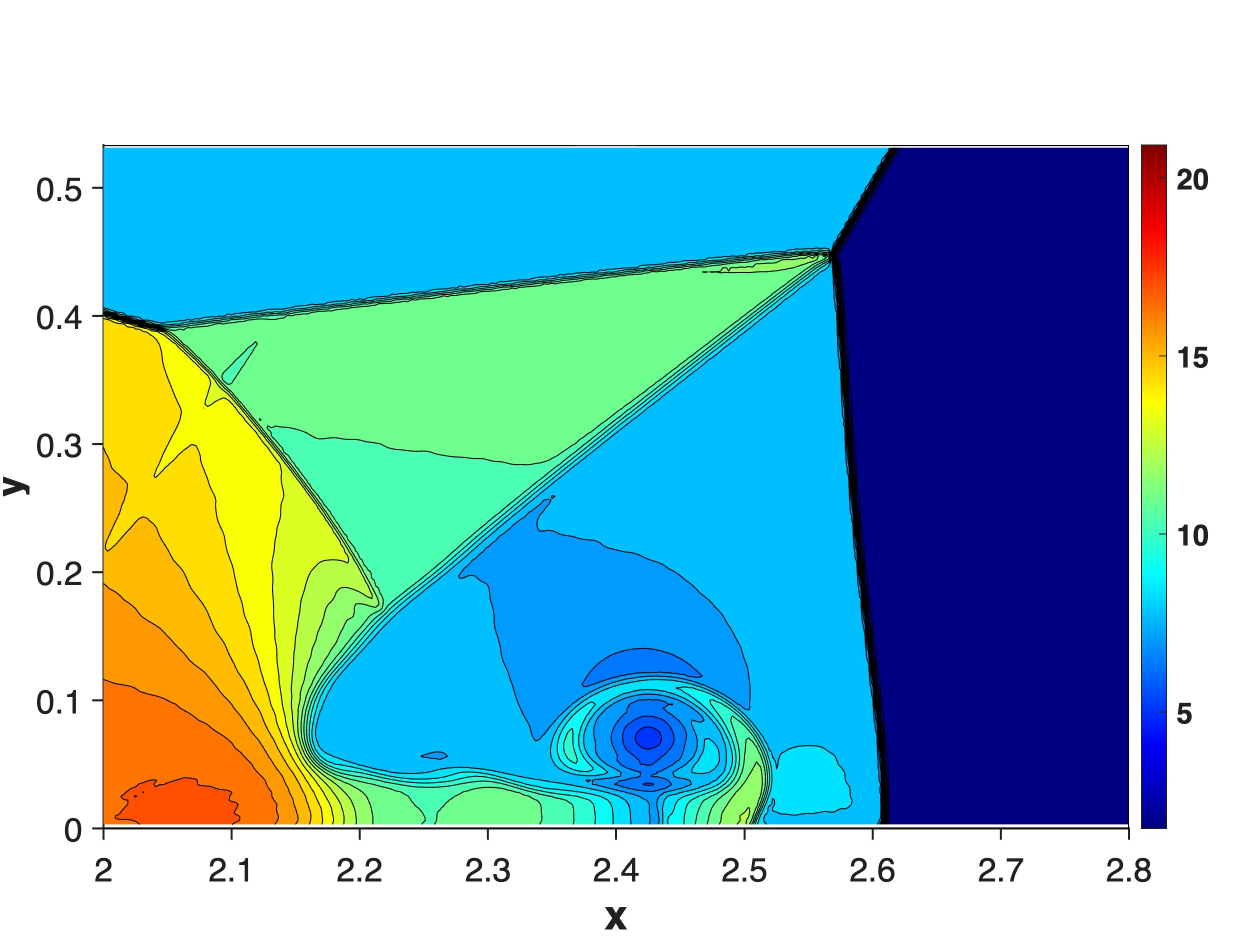}
        \caption{Mach stem in (c)}
    \end{subfigure}
    \caption{Example \ref{ex:double_mach}: Density contours of double Mach reflection at $t=0.2$, 30 contours from 1.731 to 20.92. $\Delta x=\Delta y=1/320$, $k=2$.}
    \label{fig:Example7.1}
\end{figure}

\begin{figure}[htbp!]
    \centering
    \begin{subfigure}{0.45\textwidth}
        \centering
        \includegraphics[width=\textwidth]{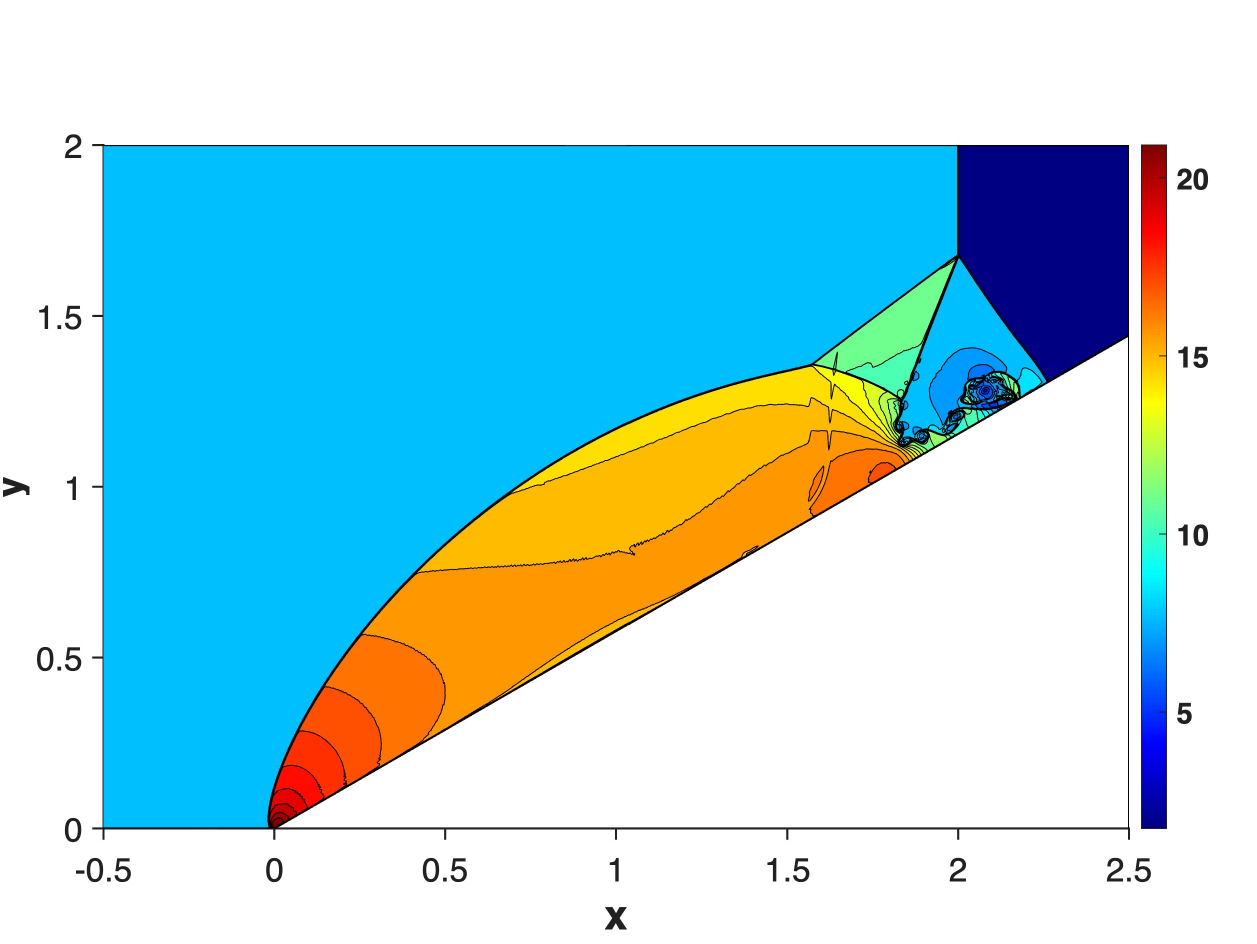}
        \caption{WILW}
    \end{subfigure}
    \hspace{0.1cm}
    \begin{subfigure}{0.45\textwidth}
        \centering
        \includegraphics[width=\textwidth]{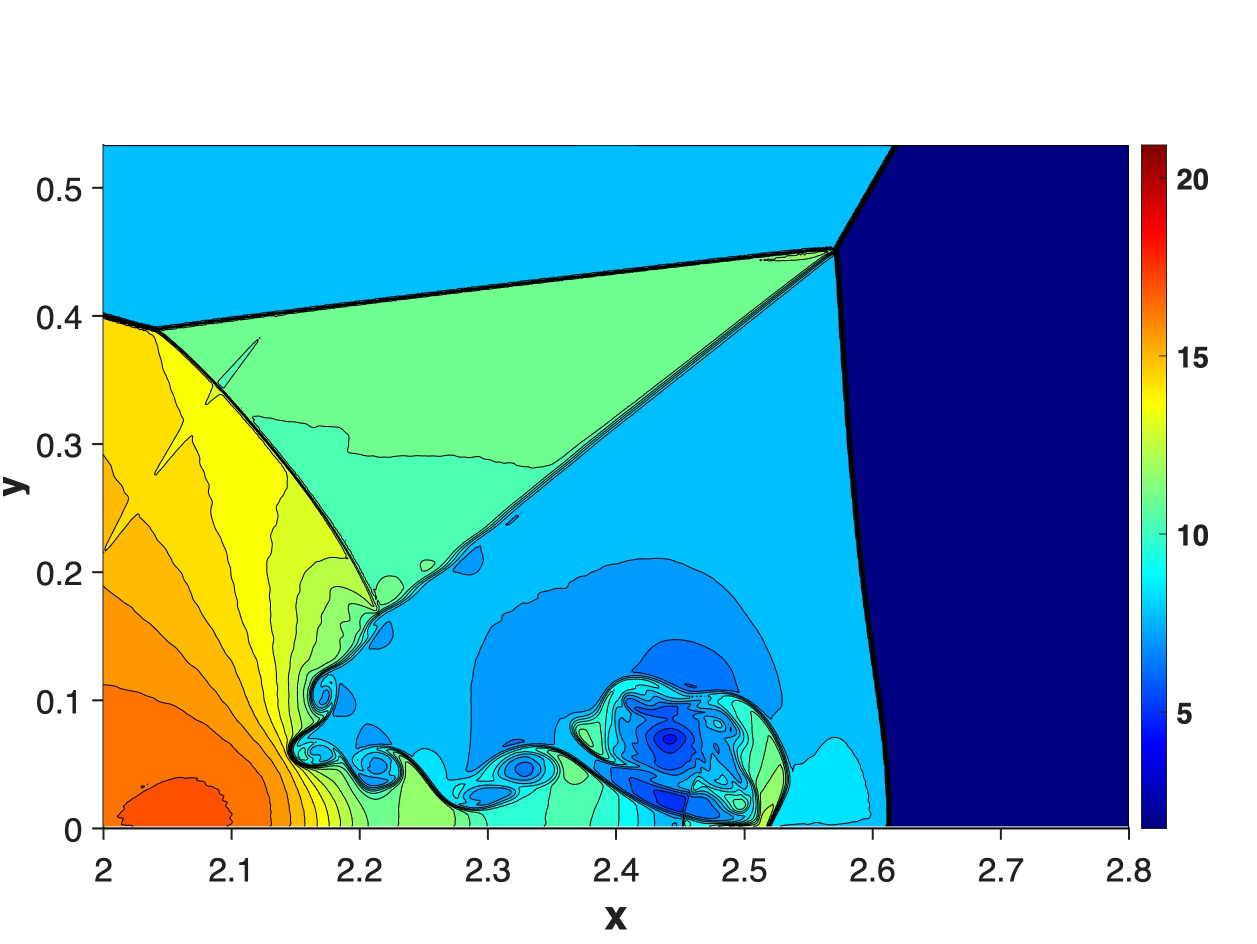}
        \caption{Mach stem in (c)}
    \end{subfigure}
    \caption{Example \ref{ex:double_mach}: Density contours of double Mach reflection at $t=0.2$, 30 contours from 1.731 to 20.92. $\Delta x=\Delta y=1/640$, $k=2$.}
    \label{fig:Example7.2}
\end{figure}
\FloatBarrier


\begin{example}
\textbf{Flow past a cylinder.}\label{ex:cylinder}
\end{example}

The final example considers a Mach 3 flow moving toward a circular cylinder centered at the origin of the $x$-$y$ plane with a radius of 1.  The physical domain is $[-3, 3] \times [-6, 6]$ excluding the cylinder area. We apply inflow boundary conditions at the left boundary $x = -3$ and outflow conditions at $x = 3$ and $y = \pm 6$. On the cylinder surface, we prescribe the solid wall condition $(u,v) \cdot \mathbf{n} = 0$ and implement the boundary treatment. Using the origin as the anchor point, we partition the computational domain into a uniform grid with $\Delta x = \Delta y$ along the x axis and y axis to obtain the effective computational domain as shown in Fig. \ref{FPCDomain}. We adopt the same OFDG scheme as the previous example along with the third order modified exponential Runge-Kutta method for temporal discretization. Fig. \ref{fig:Example8} show the pressure contours for $k=2$ at $t=40$ with mesh sizes $\Delta x = \Delta y = 1/20$ and $1/40$ respectively. It is evident that our method captures the bow shock effectively.

\begin{figure}[htbp!]
    \centering 
    \includegraphics[width=0.4\textwidth]{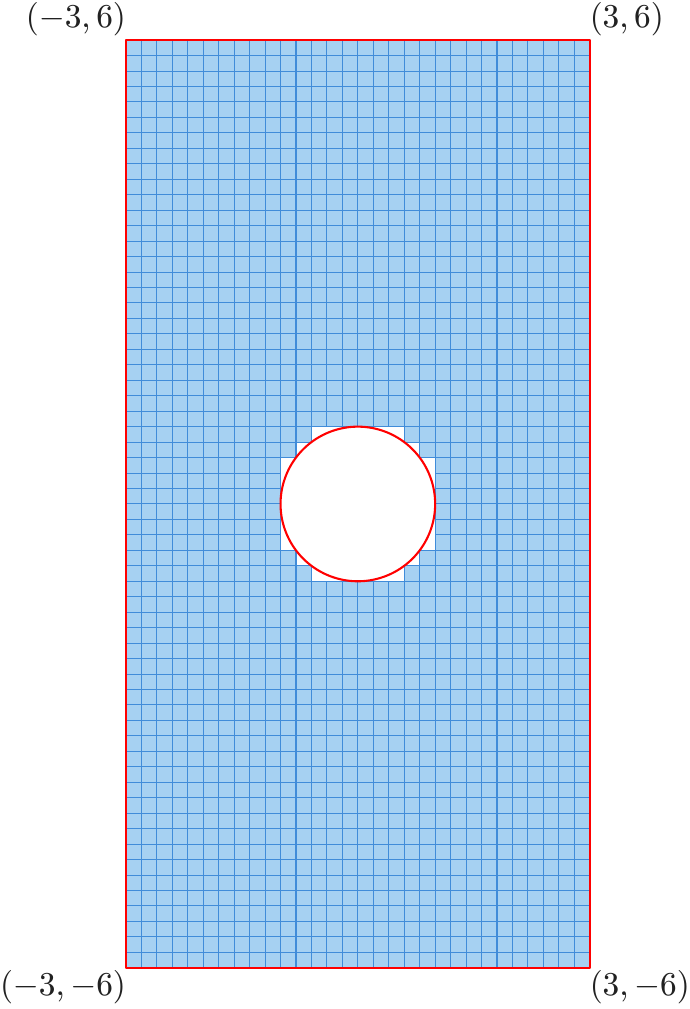}
    \caption{Example \ref{ex:cylinder}: Computational domain for flow past a cylinder.}
    \label{FPCDomain}
\end{figure}

\begin{figure}[htbp!]
    \centering
    \begin{subfigure}{0.40\textwidth}
        \centering
        \includegraphics[width=\textwidth]{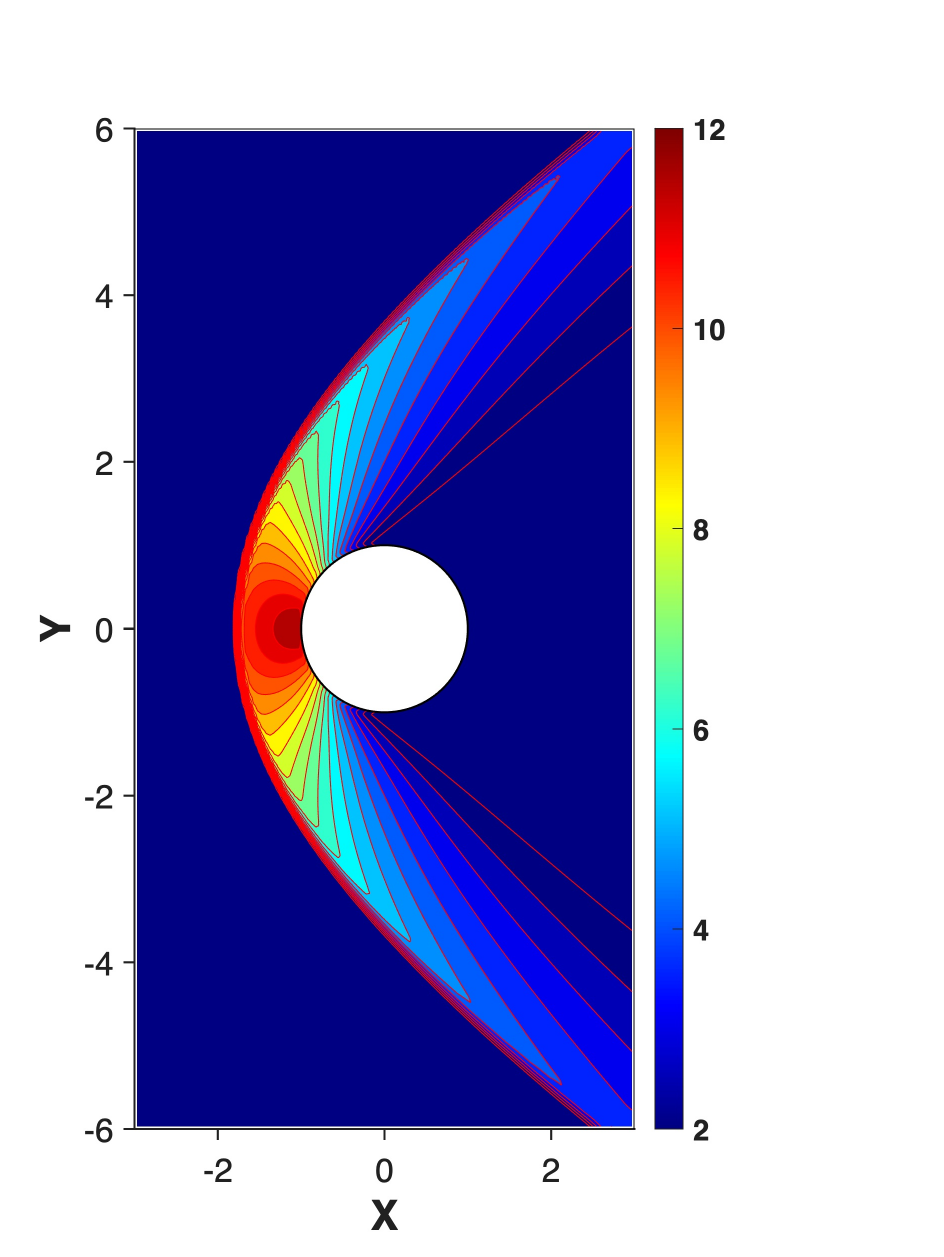}
        \caption{$\Delta x=\Delta y=1/20$, WILW}
    \end{subfigure}
    \hspace{0.2cm}
    \begin{subfigure}{0.40\textwidth}
        \centering
        \includegraphics[width=\textwidth]{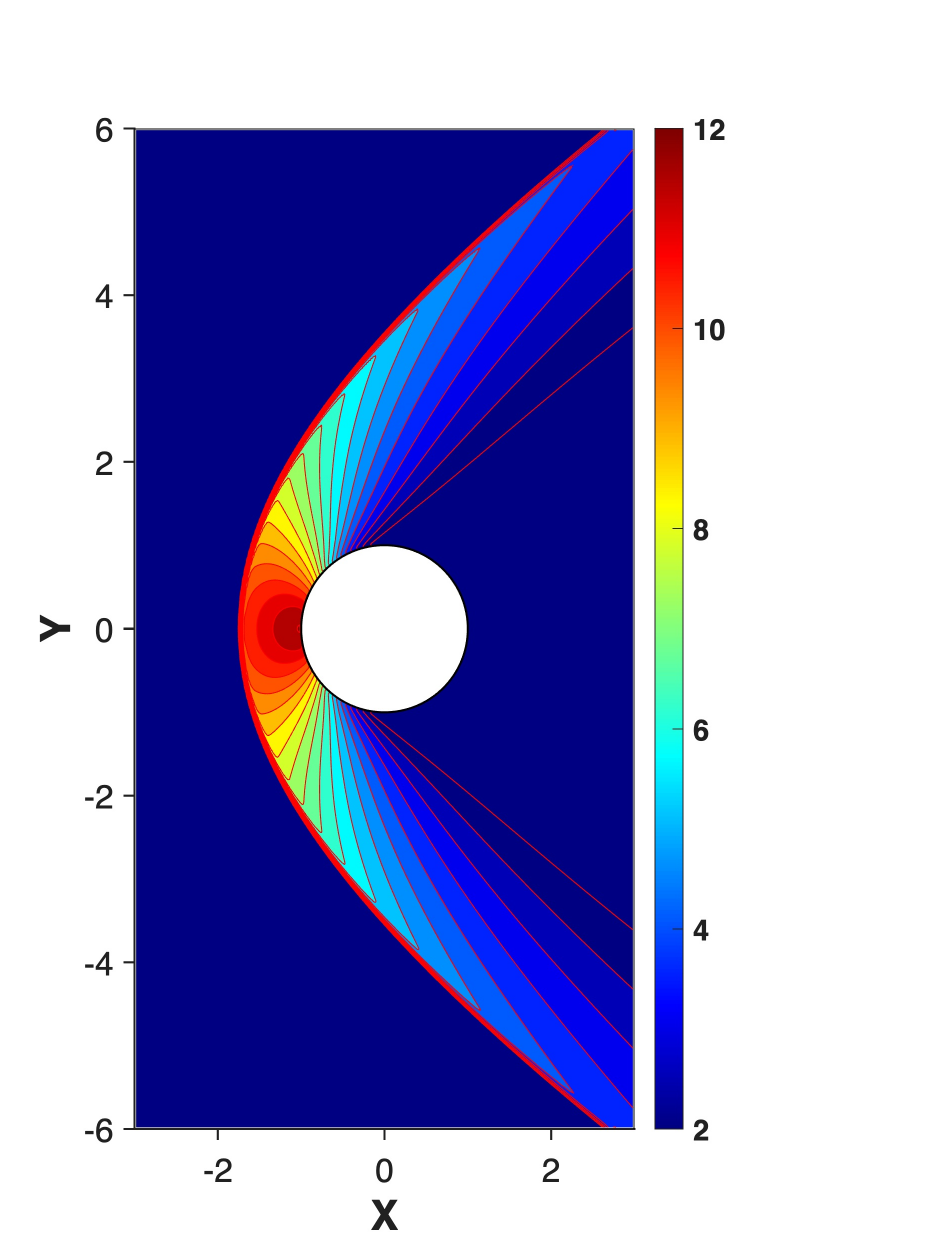}
        \caption{$\Delta x=\Delta y=1/40$, WILW}
    \end{subfigure}
    \caption{Example \ref{ex:cylinder}: Pressure contour of flow past a cylinder at $t=40$, 20 contours from 2 to 12, $k=2$.}
    \label{fig:Example8}
\end{figure}

\section{Conclusion}
In this paper, we propose a novel ILW boundary treatment for the DG method on unfitted meshes to efficiently solve hyperbolic conservation laws. While a standard RKDG scheme is utilized for internal cells, the ILW method is applied to construct reconstruction polynomials for the small cut cells near physical boundaries. To mitigate the sensitivity of truncation errors to the boundary offset scale $\delta/h$, we enhance the boundary reconstruction by one order of accuracy. Furthermore, by incorporating internal information through weighted least squares, a WILW method is developed, which reduces the reliance on high-order boundary derivatives and significantly enhances computational efficiency. Furthermore, we use eigenvalue analysis to evaluate the stability of the boundary treatment, for both semi-discrete and fully discrete schemes. A series of numerical experiments show that our method completely avoids the typical small time step problem, achieves the expected accuracy, and demonstrates the capability to handle shocks passing through the boundaries.

\appendix
\section{Expression of the coefficient matrix \texorpdfstring{$\mathbf{Q}$}{Q}}\label{MatrixQ}
The coefficient matrix $\mathbf{Q}$ of the WILW method is constructed from the matrices $\mathbf{A}$, $\mathbf{B}$, $\mathbf{C}$, and $\mathbf{D}$. For $k=2,3$, their explicit expressions are listed below.

\vspace{0.5cm}

\noindent\textbf{WILW ($k=2$)}
{\scriptsize

{\renewcommand{\arraystretch}{1.25}
$$
\mathbf A=
\begin{pmatrix}
\frac{23}{16} & -\frac{115}{24} & \frac{115}{16}\\
-\frac{9}{16} & \frac{15}{8} & -\frac{45}{16}\\
-\frac{1}{16} & \frac{5}{24} & -\frac{5}{16}
\end{pmatrix},\quad
\mathbf B=
\begin{pmatrix}
-\frac{43}{16} & -\frac{29}{24} & \frac{1}{16}\\
\frac{69}{16} & -\frac{15}{8} & -\frac{15}{16}\\
-\frac{19}{16} & \frac{139}{24} & -\frac{71}{16}
\end{pmatrix},$$}
$$
\mathbf C=\mathbf B+L_1(\eta,h)\mathbf K_2+L_2(\eta,h)\mathbf K_3,
\quad
\mathbf D=L_3(\eta,h)\mathbf K_2+L_4(\eta,h)\mathbf K_3,
$$
where
{\renewcommand{\arraystretch}{1.25}
$$
\mathbf K_2=
\begin{pmatrix}
\frac{23}{16} & \frac{23}{24} & \frac{23}{16}\\
-\frac{9}{16} & -\frac{3}{8} & -\frac{9}{16}\\
-\frac{1}{16} & -\frac{1}{24} & -\frac{1}{16}
\end{pmatrix},
\quad
\mathbf K_3=
\begin{pmatrix}
-\frac{23}{4} & 0 & \frac{23}{4}\\
\frac{9}{4} & 0 & -\frac{9}{4}\\
\frac{1}{4} & 0 & -\frac{1}{4}
\end{pmatrix},$$}

\begin{align*}
L_1(\eta,h)&=
\frac{\eta\left(
\begin{aligned}
&(108+144h^2)\eta^5
+(672+864h^2)\eta^4
+(1605+1872h^2)\eta^3\\
&+(1810+1764h^2)\eta^2
+(933+684h^2)\eta
+(161+72h^2)
\end{aligned}
\right)}
{D(\eta,h)},
\end{align*}

\vspace{-1em}

\begin{align*}
L_2(\eta,h)&=
-\frac{\eta\left(
\begin{aligned}
&(24+240h^2)\eta^5
+(114+1440h^2)\eta^4
+(195+3120h^2)\eta^3\\
&+(145+2862h^2)\eta^2
+(45+897h^2)\eta
+(4-63h^2)
\end{aligned}
\right)}
{D(\eta,h)},
\end{align*}

\vspace{-1em}

\begin{align*}
L_3(\eta,h)&=
-\frac{\eta\left(
\begin{aligned}
&(36-144h^2)\eta^5
+(240-864h^2)\eta^4
+(579-1872h^2)\eta^3\\
&+(622-1692h^2)\eta^2
+(291-468h^2)\eta
+(47+72h^2)
\end{aligned}
\right)}
{D(\eta,h)},
\end{align*}

\vspace{-1em}

\begin{align*}
L_4(\eta,h)&=
\frac{\eta\left(
\begin{aligned}
&(24-48h^2)\eta^5
+(138-288h^2)\eta^4
+(297-624h^2)\eta^3\\
&+(293-558h^2)\eta^2
+(129-147h^2)\eta
+(20+27h^2)
\end{aligned}
\right)}
{D(\eta,h)}.
\end{align*}

\vspace{-1em}

\begin{align*}
D(\eta,h)
&=
(72+288h^2)\eta^6
+(432+1728h^2)\eta^5
+(1026+3744h^2)\eta^4 \\
&\quad
+(1224+3456h^2)\eta^3
+(762+1152h^2)\eta^2
+228\eta
+(26+18h^2).
\end{align*}
}

\noindent\textbf{WILW ($k=3$)}
{\scriptsize

{\renewcommand{\arraystretch}{1.25}
$$
\mathbf A=
\begin{pmatrix}
-\frac{3666765}{2097152} & \frac{12033}{2048} & -\frac{20055}{2048} & \frac{20055}{2048}\\
\frac{3183785}{2097152} & -\frac{7077}{2048} & \frac{11795}{2048} & -\frac{11795}{2048}\\
\frac{4447735}{2097152} & \frac{1533}{2048} & -\frac{2555}{2048} & \frac{2555}{2048}\\
\frac{1141485}{2097152} & \frac{2583}{2048} & -\frac{4305}{2048} & \frac{4305}{2048}
\end{pmatrix},
\quad
\mathbf B=
\begin{pmatrix}
-\frac{15109}{6144} & -\frac{4521}{2048} & \frac{255}{2048} & \frac{223205}{3145728}\\
\frac{43577}{6144} & -\frac{7699}{2048} & -\frac{1115}{2048} & -\frac{148363}{1048576}\\
-\frac{11761}{6144} & \frac{10747}{2048} & -\frac{5629}{2048} & -\frac{3619775}{3145728}\\
-\frac{4723}{6144} & -\frac{7983}{2048} & \frac{21993}{2048} & -\frac{22131013}{3145728}
\end{pmatrix},
$$}

$$
\mathbf C=\mathbf B+L_1(\eta,h)\mathbf K_2+L_2(\eta,h)\mathbf K_3,
\quad
\mathbf D=L_3(\eta,h)\mathbf K_2+L_4(\eta,h)\mathbf K_3,
$$
where
{\renewcommand{\arraystretch}{1.25}
$$
\mathbf K_2=
\begin{pmatrix}
\frac{2483}{2048} & \frac{2101}{2048} & \frac{2101}{2048} & \frac{2483}{2048}\\
-\frac{4381}{6144} & -\frac{3707}{6144} & -\frac{3707}{6144} & -\frac{4381}{6144}\\
\frac{949}{6144} & \frac{803}{6144} & \frac{803}{6144} & \frac{949}{6144}\\
\frac{533}{2048} & \frac{451}{2048} & \frac{451}{2048} & \frac{533}{2048}
\end{pmatrix},
\quad
\mathbf K_3=
\begin{pmatrix}
-\frac{2865}{256} & \frac{4011}{256} & -\frac{4011}{256} & \frac{2865}{256}\\
\frac{1685}{256} & -\frac{2359}{256} & \frac{2359}{256} & -\frac{1685}{256}\\
-\frac{365}{256} & \frac{511}{256} & -\frac{511}{256} & \frac{365}{256}\\
-\frac{615}{256} & \frac{861}{256} & -\frac{861}{256} & \frac{615}{256}
\end{pmatrix},
$$}

\begin{align*}
L_1(\eta,h)
&=
\frac{
\eta^2\left(
\begin{aligned}
&(2700+3600h^2)\eta^{10}
+(33600+43200h^2)\eta^9
+(186900+223200h^2)\eta^8\\
&+(610720+651600h^2)\eta^7
+(1294905+1186560h^2)\eta^6
+(1855680+1405080h^2)\eta^5\\
&+(1813830+1095240h^2)\eta^4
+(1190406+556920h^2)\eta^3
+(502053+179400h^2)\eta^2\\
&+(124138+34560h^2)\eta
+(14172+3360h^2)
\end{aligned}
\right)}
{2D(\eta,h)},
\end{align*}

\vspace{-1em}

\begin{align*}
L_2(\eta,h)
&=
-\frac{
\eta^2\left(
\begin{aligned}
&(300+3000h^2)\eta^{10}
+(2850+36000h^2)\eta^9
+(11760+186240h^2)\eta^8\\
&+(27665+541980h^2)\eta^7
+(41010+966240h^2)\eta^6
+(40035+1063380h^2)\eta^5\\
&+(26146+662734h^2)\eta^4
+(11364+141477h^2)\eta^3
+(3204-82293h^2)\eta^2\\
&+(552-55353h^2)\eta
+(48-8373h^2)
\end{aligned}
\right)}
{D(\eta,h)},
\end{align*}

\vspace{-1em}

\begin{align*}
L_3(\eta,h)
&=
-\frac{
\eta^2\left(
\begin{aligned}
&(900-3600h^2)\eta^{10}
+(12000-43200h^2)\eta^9
+(68100-223200h^2)\eta^8\\
&+(218320-644400h^2)\eta^7
+(439575-1121760h^2)\eta^6
+(582240-1163880h^2)\eta^5\\
&+(515250-616440h^2)\eta^4
+(302346-13320h^2)\eta^3
+(113643+169800h^2)\eta^2\\
&+(25078+80640h^2)\eta
+(2532+11040h^2)
\end{aligned}
\right)}
{2D(\eta,h)},
\end{align*}

\vspace{-1em}

\begin{align*}
L_4(\eta,h)
&=
\frac{
\eta^2\left(
\begin{aligned}
&(300-600h^2)\eta^{10}
+(3450-7200h^2)\eta^9
+(17460-36960h^2)\eta^8\\
&+(51085-105180h^2)\eta^7
+(95490-178920h^2)\eta^6
+(118995-179220h^2)\eta^5\\
&+(100146-88346h^2)\eta^4
+(56388+4371h^2)\eta^3
+(20484+28641h^2)\eta^2\\
&+(4392+12745h^2)\eta
+(432+1701h^2)
\end{aligned}
\right)}
{D(\eta,h)},
\end{align*}

\vspace{-1em}

\begin{align*}
D(\eta,h)
&=
(900+3600h^2)\eta^{12}
+(10800+43200h^2)\eta^{11}
+(59400+223200h^2)\eta^{10} \\
&\quad
+(198000+648000h^2)\eta^9
+(444705+1152720h^2)\eta^8
+(706440+1272960h^2)\eta^7 \\
&\quad
+(810130+822480h^2)\eta^6
+(673020+246240h^2)\eta^5
+(401409+2976h^2)\eta^4 \\
&\quad
+(168116+13824h^2)\eta^3
+(47292+28692h^2)\eta^2
+(8112+10536h^2)\eta
+(656+1156h^2).
\end{align*}
}

\section{The Routh-Hurwitz criterion}\label{Routh-Hurwitz Criterion}
The Routh--Hurwitz criterion \cite{routh} provides a necessary and sufficient condition for determining whether all roots of a polynomial lie in the left half-plane. By constructing the Routh table and examining the sign changes in its first column, stability can be determined without explicitly computing the roots. This makes it particularly useful for polynomials with symbolic coefficients.

Consider the characteristic equation
$$ a_n s^n + a_{n-1} s^{n-1} + \cdots + a_1 s + a_0 = 0.$$
To determine whether all roots lie in the left half-plane, we construct the Routh table as follows:

\begin{itemize}
    \item First two rows: The first row consists of the coefficients of $s^n, s^{n-2}, s^{n-4}, \dots$, and the second row consists of the coefficients of $s^{n-1}, s^{n-3}, s^{n-5}, \dots$.
    $$\begin{array}{c|cccc}
    \text{Row} & \multicolumn{4}{c}{\text{Routh Table Entries}} \\
    \hline
    s^{n}   & a_n       & a_{n-2}     & a_{n-4}     & \cdots \\
    s^{n-1} & a_{n-1}   & a_{n-3}     & a_{n-5}     & \cdots \\
    s^{n-2} & b_1       & b_2         & b_3         & \cdots \\
    s^{n-3} & c_1       & c_2         & c_3         & \cdots \\
    s^{n-4} & d_1       & d_2         & d_3         & \cdots \\
    \vdots & \vdots & \vdots      & \vdots      & \ddots \\
    s^1   & y_1       &             &             &        \\
    s^0   & z_1       &             &             &        
    \end{array}
    $$
    
    \item Subsequent rows: The entries are calculated from the previous two rows using
    $$R_{i,j} = \frac{-1}{R_{i-1,1}} \cdot \left| \begin{array}{cc}
    R_{i-2,1} & R_{i-2,j+1} \\
    R_{i-1,1} & R_{i-1,j+1}
    \end{array} \right|$$
    where $R_{i,j}$ denotes the element in the $i$-th row and $j$-th column of the Routh table (assuming the $s^n$ row is $i=1$ and the $s^{n-1}$ row is $i=2$). Continue this process for $i \ge 3$ until all $n+1$ rows are completed.
\end{itemize}

\begin{proposition}[Routh--Hurwitz Criterion \cite{routh}]
A necessary and sufficient condition for all roots of a polynomial with real coefficients to have negative real parts is that all entries in the first column of the Routh table have the same sign.
\end{proposition}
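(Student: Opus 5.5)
The statement is the classical Routh--Hurwitz criterion, and I would prove it through the argument principle (Cauchy index) together with the Euclidean-algorithm structure of the Routh table. Without loss of generality take $a_n>0$. The key observation is that the first two rows of the table encode the even/odd splitting of $p(s)=a_ns^n+\cdots+a_0$. On the imaginary axis $s=i\omega$, write $p(i\omega)=U(\omega)+iV(\omega)$, where $U$ and $V$ are real polynomials built from the even and odd coefficients, respectively. Each subsequent row of the Routh table is exactly the remainder, up to a sign and a positive factor, of the polynomial division of the row two above by the row directly above. Hence the rows form a (negated) Sturm-type remainder sequence for the pair of polynomials given by rows one and two. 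This identification is the first step, and it follows directly from the determinant formula for $R_{i,j}$.

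The second step is the counting argument. Assume $p$ has no roots on the imaginary axis, which holds when every first-column entry is nonzero. Let $N_-$ and $N_+$ denote the numbers of roots in the open left and right half-planes. By the argument principle, as $\omega$ runs from $-\infty$ to $\infty$, the argument of $p(i\omega)$ increases by $\pi(N_- - N_+)$. This change of argument is expressed through the Cauchy index $I_{-\infty}^{\infty}(V/U)$, or its reciprocal, depending on the parity of $n$, and one obtains $N_- - N_+ = \pm I$.

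Sturm's theorem, applied to the generalized Sturm chain from the first step, evaluates this Cauchy index as the difference in sign variations of the chain at $\omega=\pm\infty$. Those signs are determined by the leading coefficients, which are exactly the first-column entries $R_{i,1}$ up to alternating factors of $(-1)$ coming from the odd or even degree. Carrying out the bookkeeping gives the usual conclusion: $N_+$ equals the number of sign changes in the first column. In particular, $N_+=0$ and $N_-=n$ exactly when all first-column entries share the sign of $a_n$. This proves sufficiency.

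For necessity, suppose all roots lie in the open left half-plane. Then $p(i\omega)\neq 0$ for every real $\omega$, and $N_-=n$ forces the Cauchy index to reach its extreme value $\pm n$. This requires the remainder chain to have full length, with every degree dropping by exactly one, so no first-column entry vanishes and no row terminates early. The same variation count then shows that there are no sign changes.

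I expect the main obstacle to be the careful sign and parity bookkeeping. Specifically, I need to match the Routh recursion to the Sturm remainders with the correct sign conventions and to handle the argument-principle contribution at infinity for $n$ even versus $n$ odd. I would fix this by treating the two parity cases separately and checking low-degree cases ($n=1,2$) against the general formula. An alternative route, if that proves messy, is induction on $n$. One shows that $p$ is Hurwitz if and only if $a_{n-1}/a_n>0$ and the degree-$(n-1)$ polynomial formed from rows two and three is Hurwitz. This reduction can be proved by a homotopy/continuity argument: roots cannot cross the imaginary axis along the family $p_\mu = q + \mu(\text{odd/even part})$, $\mu\in[0,1]$.
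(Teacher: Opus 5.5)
The paper does not prove this proposition. It quotes the classical criterion from the literature as a black box and only ever uses the sufficiency direction: a first column of positive rational functions of $(\eta,h)$ implies that $\chi$, or its M\"obius transform $\hat{\chi}$, has all roots in the open left half-plane. Your Cauchy-index/Sturm-chain argument is the standard proof, and it is correct.

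The sign and parity bookkeeping you anticipate disappears with one normalization. Let $r_k(s)=\sum_{j\ge 1} R_{k,j}\,s^{\,n-k+3-2j}$ be the $k$-th row polynomial, and set $\phi_k(\omega)=i^{-(n-k+1)}r_k(i\omega)$.
\begin{itemize}
\item The recursion $r_k=r_{k-2}-(R_{k-2,1}/R_{k-1,1})\,s\,r_{k-1}$ is exactly the polynomial remainder of $r_{k-2}$ by $r_{k-1}$.
\item Hence $\phi_k=(R_{k-2,1}/R_{k-1,1})\,\omega\,\phi_{k-1}-\phi_{k-2}$. This is a genuine Sturm chain of real polynomials, and its leading coefficients are exactly $R_{k,1}$.
\item Moreover $p(i\omega)=i^n(\phi_1-i\phi_2)$ with $\deg\phi_1=n>\deg\phi_2$.
\end{itemize}
Writing $V(\pm\infty)$ for the sign variations of the chain, you get $N_--N_+=I_{-\infty}^{\infty}(\phi_2/\phi_1)=V(-\infty)-V(+\infty)=n-2V(+\infty)$ in one stroke. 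No case split on the parity of $n$ is needed.

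Two steps you only assert should be written out.
\begin{itemize}
\item For sufficiency, you need the absence of imaginary-axis roots. It follows because a common real zero of $\phi_1,\phi_2$ would propagate down the chain to the nonzero constant $\phi_{n+1}=R_{n+1,1}$.
\item For necessity, your full-length-chain argument does more than rule out sign changes. It shows that for a Hurwitz polynomial every pivot $R_{i-1,1}$ is nonzero. So the table, whose recursion divides by these pivots, is well defined at all, which the statement tacitly presupposes.
\end{itemize}

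Compared with the citation, your route makes the appendix self-contained. It also yields the sharper count: $N_+$ equals the number of sign changes in the first column. The paper's stability proofs, however, need only the sufficiency half.
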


\section{Positivity analysis via Bernstein basis expansion}\label{Bernstein Basis Expansion}
Given a polynomial $P(x) = \sum_{k=0}^n c_k x^k$, it can be expressed in the Bernstein basis as:
$$P(x) = \sum_{j=0}^n \beta_j B_{j,n}(x),$$
where $B_{j,n}(x) = \binom{n}{j} x^j (1-x)^{n-j}$. The coefficients $\beta_j$ are related to the monomial coefficients $c_k$ via:
$$\beta_j = \sum_{k=0}^j c_k \frac{\binom{j}{k}}{\binom{n}{k}}, \quad j=0, \dots, n.$$
The primary advantage of this expansion lies in the \textbf{Convex Hull Property} \cite{bernstein2002}. For any $x \in [0,1]$, the value of $P(x)$ is bounded by the minimum and maximum of its Bernstein coefficients $\beta_j$. In our stability analysis, we utilize this property to obtain a sufficient condition for positivity. 

\begin{proposition}[Positivity Condition \cite{bernstein2002}]\label{Positivity Condition}
If all Bernstein coefficients satisfy
$\beta_j>0$
(or $\beta_j<0$),
then $P(x)$ is strictly positive
(or strictly negative)
for all $x\in[0,1]$.
\end{proposition}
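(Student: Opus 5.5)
The plan is to prove the Positivity Condition directly from two elementary properties of the Bernstein basis on $[0,1]$: \emph{nonnegativity} and \emph{partition of unity}. Together they show that $P(x)$ is a convex combination of its Bernstein coefficients. The bound then follows at once. The change-of-basis formula relating $\beta_j$ to the monomial coefficients $c_k$ is not needed for this step; we only use that the representation $P(x)=\sum_{j=0}^n \beta_j B_{j,n}(x)$ holds, as stated just before the Proposition.

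\emph{Nonnegativity.} For $x\in[0,1]$ both $x^j\ge 0$ and $(1-x)^{n-j}\ge 0$. The binomial coefficient $\binom{n}{j}$ is positive. Hence $B_{j,n}(x)\ge 0$ for all $j=0,\dots,n$ and all $x\in[0,1]$.

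\emph{Partition of unity.} By the binomial theorem,
$$\sum_{j=0}^n B_{j,n}(x)=\sum_{j=0}^n\binom{n}{j}x^j(1-x)^{n-j}=\bigl(x+(1-x)\bigr)^n=1.$$

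\emph{The bound.} Let $\beta_{\min}=\min_j\beta_j$ and $\beta_{\max}=\max_j\beta_j$. Since each $B_{j,n}(x)\ge0$, we have $\beta_{\min}B_{j,n}(x)\le\beta_j B_{j,n}(x)\le\beta_{\max}B_{j,n}(x)$ for every $j$. Summing over $j$ and using the partition of unity gives
$$\beta_{\min}\le P(x)\le\beta_{\max}\qquad\text{for all } x\in[0,1].$$
This is exactly the convex hull property quoted before the Proposition.

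\emph{Conclusion.} If all $\beta_j>0$, then $P(x)\ge\beta_{\min}>0$ on $[0,1]$, so $P$ is strictly positive there. If all $\beta_j<0$, then $P(x)\le\beta_{\max}<0$, so $P$ is strictly negative.

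There is no real obstacle in this argument. The only point needing care is that the inequalities are strict: they come from the finite minimum (or maximum) of the coefficients being strictly positive (or negative), not merely from each term being positive pointwise.

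For the stability proofs in Section 2 we also need the bivariate version, on the square $[0,1]^2$ in $(\eta,h)$. There one uses the tensor-product basis $B_{i,n}(\eta)B_{j,m}(h)$. This basis is again nonnegative and sums to $1$, so the same argument applies verbatim. Positivity on the half-open domain $[0,1)\times(0,1)$ then follows because it is contained in the closed square.
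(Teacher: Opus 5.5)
Your proof is correct and takes the same route as the paper, which invokes the convex hull property of the Bernstein basis (citing the reference) without proving it. You derive that property from the nonnegativity and partition of unity of $B_{j,n}$ on $[0,1]$, then obtain strict positivity (or negativity) from $\beta_{\min}>0$ (or $\beta_{\max}<0$); your tensor-product remark likewise matches the paper's bivariate statement.
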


This framework naturally extends to bivariate polynomials. Consider
$$P(x,y)=\sum_{p=0}^{m}\sum_{q=0}^{n}
c_{p,q}x^p y^q,
\qquad
(x,y)\in[0,1]\times[0,1].$$
By employing tensor-product Bernstein polynomials, $P(x,y)$ can be represented as
$$P(x,y)=\sum_{i=0}^{m}\sum_{j=0}^{n}
\beta_{i,j}
B_{i,m}(x)
B_{j,n}(y),$$
where the Bernstein coefficients $\beta_{i,j}$ are obtained through the two-dimensional monomial-to-Bernstein transformation
$$\beta_{i,j}=\sum_{p=0}^{i}\sum_{q=0}^{j}
c_{p,q}
\frac{\binom{i}{p}}{\binom{m}{p}}
\frac{\binom{j}{q}}{\binom{n}{q}},
\quad
0\le i\le m,\quad
0\le j\le n.$$

\begin{proposition}[Bivariate Positivity Condition]
If all Bernstein coefficients satisfy
$\beta_{i,j}>0$
(or $\beta_{i,j}<0$),
then $P(x, y)$ is strictly positive (or strictly negative) for all $(x, y) \in [0,1] \times [0,1]$.
\end{proposition}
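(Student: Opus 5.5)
The plan is to reduce the bivariate statement to two elementary facts about the univariate Bernstein basis, and then observe that $P(x,y)$ is a convex combination of its Bernstein coefficients. No appeal to the Routh--Hurwitz machinery is needed. The only earlier ingredient is the Bernstein representation of $P$ itself, which I will also justify.

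\textbf{Step 1 (validity of the representation).} I would derive the stated coefficient formula from the univariate identity
$$x^p=\sum_{i=p}^{m}\frac{\binom{i}{p}}{\binom{m}{p}}B_{i,m}(x),\qquad 0\le p\le m.$$
This identity follows by writing $x^p=x^p\,(x+(1-x))^{m-p}$, expanding binomially, and using $\binom{m-p}{i-p}\binom{m}{p}=\binom{m}{i}\binom{i}{p}$. Multiplying the identity for $x^p$ by the analogous identity for $y^q$ (with $n$ in place of $m$) and summing against $c_{p,q}$ gives exactly the tensor-product expansion with the coefficients $\beta_{i,j}$ listed above. Hence the hypothesis is indeed a hypothesis about the given $P$ on $[0,1]\times[0,1]$.

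\textbf{Step 2 (nonnegativity and partition of unity).} For $(x,y)\in[0,1]^2$, each factor $x^i(1-x)^{m-i}$ and $y^j(1-y)^{n-j}$ is $\ge 0$. Therefore $B_{i,m}(x)B_{j,n}(y)\ge0$. By the binomial theorem,
$$\sum_{i=0}^m B_{i,m}(x)=(x+1-x)^m=1,$$
and similarly in $y$. It follows that
$$\sum_{i=0}^m\sum_{j=0}^n B_{i,m}(x)B_{j,n}(y)=1.$$

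\textbf{Step 3 (conclusion).} Steps 1 and 2 show that $P(x,y)$ is a convex combination of the numbers $\beta_{i,j}$, so
$$\min_{i,j}\beta_{i,j}\le P(x,y)\le\max_{i,j}\beta_{i,j}.$$
If every $\beta_{i,j}>0$, then $P(x,y)\ge\min_{i,j}\beta_{i,j}>0$ on the whole closed square. Here strictness comes for free because the minimum is taken over finitely many coefficients. The negative case follows by applying this argument to $-P$, whose Bernstein coefficients are $-\beta_{i,j}$.

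There is no real obstacle. The only point requiring care is the combinatorial identity in Step 1, which ensures that the stated transformation formula produces exactly the coefficients of $P$. A second point is the remark that the result holds on the closed square, and hence in particular on the half-open region $[0,1)\times(0,1)$ used in the stability theorems.
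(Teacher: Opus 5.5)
Your proof is correct and is the paper's argument: the paper justifies the proposition in one line by the convex hull property of tensor-product Bernstein polynomials, which is exactly your Steps 2--3 (nonnegative basis functions forming a partition of unity, hence $\min_{i,j}\beta_{i,j}\le P(x,y)\le\max_{i,j}\beta_{i,j}$). Your Step 1 also checks the monomial-to-Bernstein coefficient formula, which the paper only states; this is a sound addition but not needed for the proposition itself.
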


This result follows directly from the convex hull property of tensor-product Bernstein polynomials.In the present work, these positivity conditions are applied to the numerator and denominator polynomials $b_i(\eta,h)$ and $a_i(\eta,h)$ to determine the sign of $R_{i,1}(\eta,h)$.

\section{The first-column entries of the Routh table}\label{Routh-Table}

For the semi-discrete case with $k=1$, the first-column entries of the corresponding Routh table have already been presented in the main text. Here, we list those for the fully discrete case with $k=1$.

For $k=2,3$, due to space constraints, we present only the first four first-column entries for the semi-discrete case with $k=2$. The remaining entries are omitted, but have the same structure, namely rational functions of $(\eta,h)\in[0,1)\times(0,1)$, whose positivity is established through the Bernstein basis expansion described in Appendix~\ref{Bernstein Basis Expansion}.

\vspace{0.5cm}

{\scriptsize

\noindent\textbf{WILW (Full-discrete) ($k=1$)} 
\begin{itemize}
\item $R_{1,1} = \begin{aligned}[t] & 21959667\eta^6 + 127226538\eta^5 + 284298336\eta^4 + 307500624\eta^3 \\ & + 167438340\eta^2 + 44562168\eta + 4648336 \end{aligned}$ 

\item $R_{2, 1} = \begin{aligned}[t] & 22313961\eta^6 + 132834006\eta^5 + 306373914\eta^4 + 344406816\eta^3 \\ & + 196678368\eta^2 + 55122480\eta + 6036800 \end{aligned}$ 

\item $R_{3, 1} = \frac{\left( \begin{aligned} & 150501151268388\eta^{12} + 1823935140898632\eta^{11} + 9835230270319956\eta^{10} \\ & + 31113308630245536\eta^9 + 64129513047069636\eta^8 + 90506042881514208\eta^7 \\ & + 89521488802383768\eta^6 + 62486929575605232\eta^5 + 30564759230051040\eta^4 \\ & + 10232699434788672\eta^3 + 2230307723710080\eta^2 + 284813523816192\eta \\ & + 16153221145600 \end{aligned} \right)}{\left(\begin{aligned} & 22313961\eta^6 + 132834006\eta^5 + 306373914\eta^4 + 344406816\eta^3 \\ & + 196678368\eta^2 + 55122480\eta + 6036800 \end{aligned}\right)}$ 

\item $R_{4,1}
=
\frac{
\left(
\begin{aligned}
&169181053161517781784\eta^{17}
+2828501795356996608099\eta^{16}\\
&+21818807655771378508524\eta^{15}
+103030750368521152653258\eta^{14}\\
&+333236694395354695923636\eta^{13}
+782706350376382736425968\eta^{12}\\
&+1381353717422389541834736\eta^{11}
+1869758579503446384161376\eta^{10}\\
&+1964102465486774313274752\eta^{9}
+1609743340206592272339840\eta^{8}\\
&+1029131442729211586989824\eta^{7}
+510273674606801964155904\eta^{6}\\
&+193836741000380883293184\eta^{5}
+55240609390183510308864\eta^{4}\\
&+11409861112111988563968\eta^{3}
+1610344946594299281408\eta^{2}\\
&+138751580791111680000\eta
+5499573094400000000
\end{aligned}
\right)
}{
2\left(
\begin{aligned}
&37625287817097\eta^{12}
+455983785224658\eta^{11}
+2458807567579989\eta^{10}\\
&+7778327157561384\eta^{9}
+16032378261767409\eta^{8}
+22626510720378552\eta^{7}\\
&+22380372200595942\eta^{6}
+15621732393901308\eta^{5}
+7641189807512760\eta^{4}\\
&+2558174858697168\eta^{3}
+557576930927520\eta^{2}
+71203380954048\eta\\
&+4038305286400
\end{aligned}
\right)
}$

\item $R_{5, 1} = \begin{aligned}[t] & 374706\eta^4 + 1426896\eta^3 + 1764180\eta^2 + 772488\eta + 110224 \end{aligned}$ 
\end{itemize}

\noindent\textbf{WILW (semi-discrete) ($k=2$)} 
\begin{itemize}
\item $R_{1,1} = 1$
\item $R_{2,1} = \frac{
    \left(
   \begin{aligned}
    &3600 \eta^6 h^2 + 1044 \eta^6 
    + 21600 \eta^5 h^2 + 6420 \eta^5 \\
    &+ 46800 \eta^4 h^2 + 15693 \eta^4 
    + 43272 \eta^3 h^2 + 19352 \eta^3 \\
    &+ 14670 \eta^2 h^2 + 12513 \eta^2 
    + 306 \eta h^2 + 3919 \eta \\
    &+ 324 h^2 + 468
  \end{aligned}
  \right)}{
  2\left(
    \begin{aligned}
      &144 \eta^6 h^2 + 36 \eta^6 
      + 864 \eta^5 h^2 + 216 \eta^5 \\
      &+ 1872 \eta^4 h^2 + 513 \eta^4 
      + 1728 \eta^3 h^2 + 612 \eta^3 \\
      &+ 576 \eta^2 h^2 + 381 \eta^2 
      + 114 \eta + 9 h^2 + 13
    \end{aligned}
  \right)}$

\item $R_{3,1} = \frac{
    3\left(
    \begin{aligned}
    &7119360\eta^{12}h^{4} + 5380992\eta^{12}h^{2} + 993600\eta^{12} + 85432320\eta^{11}h^{4} + 65832192\eta^{11}h^{2} + 12411936\eta^{11} \\
    &+ 441400320\eta^{10}h^{4} + 354042432\eta^{10}h^{2} + 69523992\eta^{10} + 1282395456\eta^{9}h^{4} + 1100354928\eta^{9}h^{2}\\
    &+ 230484552\eta^{9} + 2293837056\eta^{8}h^{4} + 2184628032\eta^{8}h^{2} + 502589250\eta^{8}+ 2596123872\eta^{7}h^{4}\\
    &+ 2889389772\eta^{7}h^{2} + 757477230\eta^{7}
    + 1835752464\eta^{6}h^{4} + 2574302508\eta^{6}h^{2} + 806672538\eta^{6} \\
    &+ 772423128\eta^{5}h^{4} + 1527991470\eta^{5}h^{2} + 609566547\eta^{5}
    + 183679920\eta^{4}h^{4} + 587119923\eta^{4}h^{2} \\
    &+ 323289833\eta^{4}
    + 32446116\eta^{3}h^{4} + 142024332\eta^{3}h^{2} + 117035395\eta^{3}
    + 7344918\eta^{2}h^{4}\\
    &+ 22971285\eta^{2}h^{2} + 27415804\eta^{2}
    + 395766\eta h^{4} + 3155823\eta h^{2} + 3732677\eta \\
    &+ 107244h^{4} + 309816h^{2} + 223756
    \end{aligned}
    \right)
}{
    \left(
    \begin{aligned}
    &144\eta^{6}h^{2} + 36\eta^{6} + 864\eta^{5}h^{2} + 216\eta^{5} \\
    &+ 1872\eta^{4}h^{2} + 513\eta^{4} + 1728\eta^{3}h^{2} + 612\eta^{3} \\
    &+ 576\eta^{2}h^{2} + 381\eta^{2} + 114\eta + 9h^{2} + 13
    \end{aligned}
    \right)
    \left(
    \begin{aligned}
    &3600\eta^{6}h^{2} + 1044\eta^{6} + 21600\eta^{5}h^{2} + 6420\eta^{5} \\
    &+ 46800\eta^{4}h^{2} + 15693\eta^{4} + 43272\eta^{3}h^{2} + 19352\eta^{3} \\
    &+ 14670\eta^{2}h^{2} + 12513\eta^{2} + 306\eta h^{2} + 3919\eta \\
    &+ 324h^{2} + 468
    \end{aligned}
    \right)
}$

\item $R_{4,1} = \frac{
    12\left(
    \begin{aligned}
    &5125939200\eta^{18}h^{6} + 10281738240\eta^{18}h^{4} + 5558284800\eta^{18}h^{2} + 894240000\eta^{18} \\
    &+ 92266905600\eta^{17}h^{6} + 185699796480\eta^{17}h^{4} + 102349889280\eta^{17}h^{2} + 16820719200\eta^{17} \\
    &+ 753513062400\eta^{16}h^{6} + 1533153277440\eta^{16}h^{4} + 869728693248\eta^{16}h^{2} + 147351204000\eta^{16} \\
    &+ 3693894865920\eta^{15}h^{6} + 7664791000320\eta^{15}h^{4} + 4523894243712\eta^{15}h^{2} + 798136467840\eta^{15} \\
    &+ 12119107818240\eta^{14}h^{6} + 25910055066240\eta^{14}h^{4} + 16108509905424\eta^{14}h^{2} + 2992893093360\eta^{14} \\
    &+ 28085349335040\eta^{13}h^{6} + 62630232179328\eta^{13}h^{4} + 41600111677632\eta^{13}h^{2} + 8242720758846\eta^{13} \\
    &+ 47290649748480\eta^{12}h^{6} + 111650834845152\eta^{12}h^{4} + 80531038487400\eta^{12}h^{2} + 17260653893010\eta^{12} \\
    &+ 58622408640000\eta^{11}h^{6} + 149272985728512\eta^{11}h^{4} + 119117089590228\eta^{11}h^{2} + 28067027303367\eta^{11} \\
    &+ 53653634001792\eta^{10}h^{6} + 150846737463936\eta^{10}h^{4} + 136006697821755\eta^{10}h^{2} + 35880760063377\eta^{10} \\
    &+ 36114316285248\eta^{9}h^{6} + 115482068788680\eta^{9}h^{4} + 120339102072624\eta^{9}h^{2} + 36282684412872\eta^{9} \\
    &+ 17791941924000\eta^{8}h^{6} + 67014015316992\eta^{8}h^{4} + 82452239438400\eta^{8}h^{2} + 29049211011674\eta^{8} \\
    &+ 6480328315680\eta^{7}h^{6} + 29631448494612\eta^{7}h^{4} + 43593787823706\eta^{7}h^{2} + 18345964269200\eta^{7} \\
    &+ 1847292950772\eta^{6}h^{6} + 10176249637926\eta^{6}h^{4} + 17725325737089\eta^{6}h^{2} + 9060118915455\eta^{6} \\
    &+ 457903799568\eta^{5}h^{6} + 2812322208249\eta^{5}h^{4} + 5551779875742\eta^{5}h^{2} + 3447685454633\eta^{5} \\
    &+ 100833775344\eta^{4}h^{6} + 640613906838\eta^{4}h^{4} + 1355929558665\eta^{4}h^{2} + 987958000234\eta^{4} \\
    &+ 18043758450\eta^{3}h^{6} + 117735414327\eta^{3}h^{4} + 262613420166\eta^{3}h^{2} + 205632680413\eta^{3} \\
    &+ 2665312938\eta^{2}h^{6} + 17239361787\eta^{2}h^{4} + 39607338726\eta^{2}h^{2} + 29274555453\eta^{2} \\
    &+ 284960268\eta h^{6} + 2042821296\eta h^{4} + 4117843548\eta h^{2} + 2544603256\eta \\
    &+ 33743952h^{6} + 146223792h^{4} + 211212144h^{2} + 101694736
    \end{aligned}
    \right)
}{
    \left(
    \begin{aligned}
    &144\eta^{6}h^{2} + 36\eta^{6} + 864\eta^{5}h^{2} + 216\eta^{5} \\
    &+ 1872\eta^{4}h^{2} + 513\eta^{4} + 1728\eta^{3}h^{2} + 612\eta^{3} \\
    &+ 576\eta^{2}h^{2} + 381\eta^{2} + 114\eta + 9h^{2} + 13
    \end{aligned}
    \right)
    \left(
    \begin{aligned}
    &7119360\eta^{12}h^{4} + 5380992\eta^{12}h^{2} + 993600\eta^{12} \\
    &+ 85432320\eta^{11}h^{4} + 65832192\eta^{11}h^{2} + 12411936\eta^{11} \\
    &+ 441400320\eta^{10}h^{4} + 354042432\eta^{10}h^{2} + 69523992\eta^{10} \\
    &+ 1282395456\eta^{9}h^{4} + 1100354928\eta^{9}h^{2} + 230484552\eta^{9} \\
    &+ 2293837056\eta^{8}h^{4} + 2184628032\eta^{8}h^{2} + 502589250\eta^{8} \\
    &+ 2596123872\eta^{7}h^{4} + 2889389772\eta^{7}h^{2} + 757477230\eta^{7} \\
    &+ 1835752464\eta^{6}h^{4} + 2574302508\eta^{6}h^{2} + 806672538\eta^{6} \\
    &+ 772423128\eta^{5}h^{4} + 1527991470\eta^{5}h^{2} + 609566547\eta^{5} \\
    &+ 183679920\eta^{4}h^{4} + 587119923\eta^{4}h^{2} + 323289833\eta^{4} \\
    &+ 32446116\eta^{3}h^{4} + 142024332\eta^{3}h^{2} + 117035395\eta^{3} \\
    &+ 7344918\eta^{2}h^{4} + 22971285\eta^{2}h^{2} + 27415804\eta^{2} \\
    &+ 395766\eta h^{4} + 3155823\eta h^{2} + 3732677\eta \\
    &+ 107244h^{4} + 309816h^{2} + 223756
    \end{aligned}
    \right)}$

\end{itemize}
}

\clearpage
\bibliographystyle{plain}
\bibliography{reference}

\end{document}